\providecommand{\U}[1]{\protect\rule{.1in}{.1in}}
\g@addto@macro\bfseries{\boldmath}
\newtheorem{thm}{Theorem}[section]
\newtheorem{lem}[thm]{Lemma}
\def\A{\mathcal{A}}
\def\S{\mathcal{S}}
\def\T{\mathcal{T}}
\def\L{\mathcal{L}}
\numberwithin{equation}{section}
\title{Nullities for a class of skew-symmetric Toeplitz band matrices}
\author{\\ Ron Evans\\
Department of Mathematics\\
University of California at San Diego\\
La Jolla, CA  92093-0112 \\
revans@ucsd.edu
\\ \\
John Greene\\
Department of Mathematics and Statistics\\
University of Minnesota--Duluth\\
Duluth, MN  55812\\
jgreene@d.umn.edu
\\ \\
Mark Van Veen\\
2138 Edinburg Avenue\\
Cardiff by the Sea, CA 92007  \\
mavanveen@ucsd.edu
}
\date{February 2020}
\begin{document}

\maketitle

\noindent 2010 \textit{Mathematics Subject Classification}.
15A03, 15A15, 15B05, 15B57, 05C20, 05C50\\

\noindent \textit{Key words and phrases}.
Toeplitz band matrix, skew-symmetric,
graph cycles, nullity, matrix game, payoff matrix.

\bigskip

\begin{abstract}
\noindent
For all
$n > k \ge 1$, we give formulas for the nullity $N(n,k)$  of the
$n \times n$ skew-symmetric  Toeplitz band matrix
whose first $k$ superdiagonals have all entries $1$ and whose
remaining superdiagonals have all entries $0$.
This is accomplished by counting the number
of cycles in certain directed graphs.
As an application, for each fixed integer $z\ge 0$ and large fixed $k$,
we give an asymptotic formula for the percentage of $n > k$
satisfying $N(n,k)=z$.  For the purpose of rapid computation,
an algorithm is devised that quickly computes $N(n,k)$ even for extremely
large values of $n$ and $k$.

\end{abstract}
\maketitle

\section{Introduction}

For $n > k \ge 1$ and $x \in \mathbb{R}$ , 
let $A(n,k,x)$ denote the $n \times n$ skew-symmetric
Toeplitz matrix whose first $k$ superdiagonals have all entries $1$,
and whose remaining superdiagonal entries are all $-x$.  
For example,  $A(6,2,x)$ is the matrix
\[
\left[
\begin{array}
[c]{cccccc}
0 & 1 & 1 & -x & -x & -x\\
-1 & 0 & 1 & 1 & -x & -x\\
-1 & -1 & 0 & 1 & 1 & -x\\
x & -1 & -1 & 0 & 1 & 1\\
x & x  & -1 & -1 & 0 & 1\\
x & x & x & -1 & -1 & 0
\end{array}
\right]  .
\]

The matrices $A(n,k,x)$ are payoff matrices for the integer choice
matrix games discussed for example in \cite{EH,EW,EW2}.
Let $N(n,k)$ denote the nullity of the
skew-symmetric Toeplitz
band matrix $A(n,k):=A(n,k,0)$.  
The primary goal of this paper is to determine the nullities
$N(n,k)$  for all $n > k \ge 1$.  
Our methods are different from those of Price et al.
\cite{Price1,Price2}, who investigated nullity sequences of
Toeplitz matrices over finite fields. We say more about the benefit
of our approach in the second paragraph of Section 6.

There is an intriguing connection between the nullity $N(n,k)$
and the payoff matrix $A(n,k,x)$ when $n$ is even.
Let $D(n,k,x) \in \mathbb{Z}[x]$
denote the determinant of $A(n,k,x)$.
When $n$ is even,
we conjecture that the zero
$x=0$ of $D(n,k,x)$ has multiplicity $N(n,k)$.
For example, the term of smallest degree in $D(16,4,x)$ is $81x^4$, 
and $N(16,4)=4$. 
(If $n$ is odd, then of course the polynomial $D(n,k,x)$ vanishes
identically, since the transpose of $A(n,k,x)$ equals $-A(n,k,x)$.)

In Section 2, for each pair of integers $n,k$ with $k \ge 1$, 
we construct a directed graph $G(n,k)$ 
on the vertices $0, 1, \dots, k$ which consists of a disjoint union
of pure cycles and an open path called a ``tail".  For example, 
$G(16,8)$ consists of the two cycles
\[
7 \rightarrow 1 \rightarrow 4 \rightarrow 7, \quad
6 \rightarrow 0 \rightarrow 3 \rightarrow 6,
\]
together with the tail $\ 8 \rightarrow 2 \rightarrow 5$.
Theorem 2.1 shows that $N(n,k)$ equals the number of cycles in $G(n,k)$
(so for example $N(16,8)=2$).
As shown at the end of Section 2, Theorem 2.1 enables one to rapidly
compute the nullity $N(n,k)$ using Mathematica, even for extremely
large $n$, $k$.

In order to obtain formulas for $N(n,k)$, we undertake a detailed
analysis of the structure of the graph $G(n,k)$ in Sections 3--6,
for each fixed $k$.
Theorem 3.9, the main result in Section 3, proves 
that for any fixed pair $k$, $n$, the cycles
in $G(n,k)$ are ``translates" of each other.
In particular, these cycles all have the same length.  
Section 4 focuses on properties of the tail of $G(n,k)$.
In Section 5, we prove lemmas that show how the number of cycles in $G(n,k)$
changes as $n$ increases.
These results are then applied in Section 6 
to describe the shape
of the line graph connecting the points 
\[
(n, N(n,k)), \quad 0 \le n \le k^2+k.
\]

Theorem 7.5, our main result, determines each location $n$  
for which the nullity $N(n,k)$ has a local maximum in our line graph.
We can then exploit the shape of the line graph to determine the values of all
nullities $N(n,k)$, as is described  at the beginning of Section 7.

Applications and examples are given in Section 8.
For instance, Theorems 8.1 and 8.2 show that for large fixed $k$, 
about $30.4\%$ of the matrices $A(n,k)$ are nonsingular,
about $38\%$ have nullity $1$, and about $11\%$ have nullity $2$.
The percentages continue to decrease as the nullities increase further.

In Theorem 9.2, we prove the aforementioned conjecture
on the determinant $D(n,k,x)$ for all
$k \ge (n-2)/2$.   The conjecture remains open 
in general for $k \le (n-4)/2$.

\section{The directed graph $G(n,k)$} 
Let $\zeta$ be a complex primitive $k$-th root of unity and let 
$\xi$ be a complex primitive
$(k+1)$-th root of unity.  For integers $\ell$, define the 
$1 \times k$ row vectors
\[
u(\ell)=(1, \zeta^{\ell}, \zeta^{2 \ell},\dots, \zeta^{(k-1) \ell}),\quad
v(\ell)=(\xi^{\ell},\xi^{2 \ell}, \dots, \xi^{k \ell}).
\]
Throughout this paper, when $k$ or $k+1$ is explicitly employed as a subscript 
on an integer $\ell$, 
define $\ell_{k}$ (resp. $\ell_{k+1}$) to be the least nonnegative residue
of $\ell$ modulo $k$ (resp. $k+1$). Note that $u(\ell)=u(\ell_{k})$
and $v(\ell) = v(\ell_{k+1})$.

Define the $2k \times 2k$ matrix
\begin{equation}  \label{eq:2.1}
V(n, k) = \left( \begin{tabular}{c|c} $u(0)$ & $v(0)$ \\ 
\vdots & \vdots \\ $u(k-1)$ & $v(k-1)$ \\ 
\hline $u(k+n)$ & $v(k+n)$ \\ \vdots & \vdots \\ 
$u(2k-1 + n)$ & $v(2k-1 + n)$ \end{tabular} \right) = 
\left( \begin{tabular}{c|c} $A$ & $B$ 
\\ \hline $C$ & $D$ \end{tabular} \right),
\end{equation}
where $A, B, C, D$ are $k \times k$ matrices.
Note that $V(0,k)$ is a Vandermonde matrix with
distinct columns, so that its rows are independent over $\mathbb{C}$.

Converting the notation in \cite{T} to ours, we see that our matrix
$V(n,k)$ is the matrix in \cite[eq. (6)]{T}.  Thus by \cite[eq. (14)]{T},
$N(n,k)$ equals the nullity of $V(n,k)$.
Consequently, for
fixed $k$, $N(n,k)$ depends only on the value of 
$n \pmod{k^2+k}$.  We now drop the condition $n > k$ and extend the 
definition of $N(n,k)$ so that it equals the nullity of $V(n,k)$
for {\em all} integers $n$.   In particular, $N(0,k) = 0$.

The rows of $C$ in \eqref{eq:2.1} are a cyclic permutation of the rows of $A$.  
Thus after row reduction,
$V(n,k)$ can be converted to
$ \left( \begin{tabular}{c|c} $A$ & $B$ \\ 
\hline $0$ & $H$ \end{tabular} \right)$, where the
$i$-th row of the $k \times k$ matrix $H$ is 
\[
v((n+k + i - 1)_{k+1}) - v((n + k + i - 1)_{k}), \quad 1 \le i \le k.
\]
Since $H$ and $V(n,k)$ have the same nullity, $H$ has nullity $N(n,k)$.
In particular, $N(n,k) \le k$ for every $n$.

Define a directed graph $G(n,k)$  on the vertices $0, 1, \dots, k$ with 
edges $a \rightarrow b$ directed from $a$ to $b$ if and only if 
$v(a) - v(b)$ is a row in $H$.  If $a = b$ we consider this edge 
to be a loop, i.e., a cycle of length 1.  
In Theorem 2.1 below, we will show that the nullity of $H$
equals the number of cycles in the graph $G(n,k)$.

The graph $G(n,k)$ has exactly $k$ edges, namely
\begin{equation}\label{eq:2.2}
E(i) = E(i,n):=
(i + n -2)_{k+1} \rightarrow  (i + n - 1)_{k}, \quad 1 \le i \le k.
\end{equation}
Since no endpoint of an edge can be $k$, the vertex $k$ has in-degree $0$.
Since no initial point of an edge can be $(n-2)_{k+1}$, the vertex
$(n-2)_{k+1}$ has out-degree $0$.   For every other vertex, the in-degree
and out-degree are both $1$.  Thus $G(n,k)$ consists of an open path (called
the ``tail") connecting initial vertex $k$ to terminal vertex $(n-2)_{k+1}$,
together with a (possibly empty) disjoint union of simple cycles.
The number of edges in a path $P$ is the path length, denoted by $|P|$. 
If it happens that
$k$ = $(n-2)_{k+1}$,  then the tail has length $0$ and $k$ is an isolated
vertex.

For a cycle $C$ in $G(n,k)$, let $R(C)$ denote the set of row positions
in the matrix $H$ corresponding to the edges of $C$.
For example, if $C$ is the cycle
\[
1 \rightarrow 4 \rightarrow 8 \rightarrow 5 \rightarrow 9 \rightarrow 1,
\]
then $R(C)$ is the set of row positions of
$v(1) - v(4)$, $v(4) - v(8)$, $v(8) - v(5)$, $v(5) - v(9)$, $v(9) - v(1)$  
in the matrix $H$. 
Observe that $C$ corresponds to a $1 \times k$ 
row vector of $0$'s and $1$'s
in the left nullspace of $H$, where the $1$'s are in the positions 
matching the row positions in $R(C)$.

\begin{thm}
The nullity $N(n,k)$ of the matrix $H$ equals the number of cycles
in the graph $G(n,k)$.
\end{thm}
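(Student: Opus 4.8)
The plan is to establish that the cycle-indicator vectors $\{\chi_C : C \text{ a cycle in } G(n,k)\}$ form a basis of the left nullspace of $H$. As already observed in the text, each cycle $C$ yields a $0/1$ row vector $\chi_C$ supported on the positions $R(C)$, and this vector lies in the left nullspace of $H$: indeed, summing $v(a)-v(b)$ over the edges $a\to b$ of $C$ telescopes to $0$ because the edge-labels form a closed walk on the vertices. Since distinct cycles of $G(n,k)$ are vertex-disjoint (every vertex has in-degree and out-degree at most $1$), their edge sets $R(C)$ are pairwise disjoint, so the vectors $\chi_C$ have disjoint supports and are therefore linearly independent. This gives the lower bound: $N(n,k) = \dim(\text{left null space of } H) \ge (\text{number of cycles in } G(n,k))$.

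For the reverse inequality I would exhibit enough independent rows of $H$ — equivalently, show $\mathrm{rank}(H) \ge k - (\text{number of cycles})$ — by finding, for the ``non-cycle'' part of the graph, a set of rows of $H$ whose only left-linear-dependencies come from cycles. Concretely, consider the tail $P$: its edge set $R(P)$ indexes $|P|$ rows of $H$, and I claim these rows, together with one suitably chosen row from each cycle removed, can be arranged so that $H$ (after permuting rows to group tail-rows, then one orbit of cycle-rows at a time) is block-triangular with the tail block of full rank $|P|$ and each cycle block contributing rank $|C|-1$. The key structural fact driving this is that the $v(\ell)$ are (rows of) a Vandermonde-type family: any set of the vectors $\{v(\ell_{k+1})\}$ with distinct residues mod $k+1$ is linearly independent, so a collection of differences $v(a)-v(b)$ fails to have full rank exactly when the corresponding edges contain a cycle. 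I would make this precise by an Euler-characteristic / incidence-matrix argument: the $k\times(k+1)$ signed vertex-edge incidence matrix $M$ of $G(n,k)$ (rows indexed by edges $i\to j$, entry $+1$ in column $i$, $-1$ in column $j$) satisfies $H = M W$, where $W$ is the $(k+1)\times k$ matrix with rows $v(0),\dots,v(k)$ restricted to... in fact $W$ has rows $v(\ell)$ for $\ell = 0,\dots,k$ and is a submatrix of a full Vandermonde matrix, hence has rank $k$ (it has $k$ columns and any $k$ of its $k+1$ rows are independent). Therefore $\mathrm{rank}(H) = \mathrm{rank}(M W)$, and since $W$ has trivial kernel on the relevant subspace, $\mathrm{rank}(H) = \mathrm{rank}(M)$. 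But $\mathrm{rank}(M)$ for a directed graph equals $(\#\text{vertices touched by edges}) - (\#\text{connected components of the edge-subgraph})$, and the cycles are precisely the components contributing a rank deficiency of $1$ each while tree-like (path) components are full rank. Counting: $M$ is $k\times(k+1)$, the graph on $\{0,\dots,k\}$ has $k$ edges; its components are the cycles $C_1,\dots,C_m$ plus the tail $P$ (plus possibly isolated vertices, which carry no edges), so $\mathrm{rank}(M) = k - m$, whence $\mathrm{rank}(H) = k-m$ and $N(n,k) = k - \mathrm{rank}(H) = m$.

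The main obstacle I anticipate is justifying the step $\mathrm{rank}(MW) = \mathrm{rank}(M)$ cleanly. This requires knowing that the left kernel of $W$ — i.e. the linear relations among $v(0),v(1),\dots,v(k)$ — is exactly one-dimensional and is spanned by the all-ones vector (since $\sum_{\ell} v(\ell) = \sum_\ell(\xi^\ell,\xi^{2\ell},\dots,\xi^{k\ell})$ has each coordinate $\sum_{\ell=0}^{k}\xi^{j\ell}=0$ for $1\le j\le k$), together with the fact that this relation pulls back along $M$ to relations supported on cycles only. Equivalently one must check that the all-ones vector in $\mathbb{C}^{k+1}$, when written as a combination $y^{T}M$ for $y\in\mathbb{C}^{k}$, forces $y$ to be a combination of the cycle-indicators — which is again the statement that a $0/1$ combination of edge-rows of $M$ telescopes to a multiple of all-ones exactly when it is a union of cycles. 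I would handle this by a direct degree/support argument on $M$: any $y$ with $y^{T}M = c\cdot\mathbf{1}$ has, at each vertex $v$ with out-edge $e^{+}_v$ and in-edge $e^{-}_v$, the constraint $y_{e^{+}_v} - y_{e^{-}_v} = c$, and walking along the tail from the in-degree-zero endpoint $k$ forces $c=0$ and then $y$ constant on each cycle and zero on the tail. Once this lemma is in hand the theorem follows by combining the two inequalities, and I would present the argument in exactly that order: (1) cycle vectors are in the nullspace and independent; (2) the incidence/Vandermonde factorization $H = MW$ with $\mathrm{rank}(W)=k$; (3) the support lemma giving $\mathrm{rank}(MW)=\mathrm{rank}(M) = k-m$; (4) conclude $N(n,k)=m$.
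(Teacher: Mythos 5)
Your proof is correct, but it is organized quite differently from the paper's. The paper keeps the first half (cycle indicator vectors are independent elements of the left nullspace) and then argues directly: given any putative extra null vector $\tau$, it subtracts off cycle contributions to reduce to a combination supported on disjoint open paths, and then uses the fact that the only relation among $v(0),\dots,v(k)$ is the all-ones relation to force a telescoping chain $d_1=d_2-d_1=\dots=-d_\ell$ whose equal terms sum to zero, killing all coefficients. You instead package everything into the factorization $H=MW$ with $M$ the oriented edge--vertex incidence matrix and $W$ the $(k+1)\times k$ matrix of rows $v(0),\dots,v(k)$, compute $\mathrm{rank}(M)=k-m$ by the standard components formula, and observe that $\dim\mathrm{leftnull}(MW)=\dim\mathrm{leftnull}(M)+\dim\bigl(\mathrm{rowspace}(M)\cap\mathrm{leftker}(W)\bigr)$, with the second term vanishing because walking along the tail from the in-degree-zero vertex $k$ shows $\mathbf{1}\notin\mathrm{rowspace}(M)$. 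The two key ingredients are the same in both proofs (the one-dimensional left kernel of $W$ spanned by $\mathbf{1}$, and a telescoping argument along an open path), but your version buys a cleaner conceptual identity $N(n,k)=k-\mathrm{rank}(M)=\#\{\text{cycles}\}$ at the cost of invoking the incidence-matrix rank formula, whereas the paper's is self-contained coefficient chasing. One caution: the middle of your write-up (the block-triangularization sketch, and the claim that a set of differences $v(a)-v(b)$ drops rank ``exactly when the edges contain a cycle'') is imprecise as stated and is superseded by your own factorization argument --- that claim is only true here because $\mathbf{1}\notin\mathrm{rowspace}(M)$, which you do verify later; in a final write-up you should delete the sketch and lead with the factorization. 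Also note that loops ($a=b$) give a zero row of both $M$ and $H$, which your component count handles correctly but is worth saying explicitly.
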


\begin{proof}
As noted above,
each cycle corresponds to a row vector in the left nullspace
$\L$ of $H$.  Since the cycles are disjoint, the corresponding row vectors
are independent over $\mathbb{C}$.   
These row vectors therefore span a subspace
$\A \subset \L$ whose dimension equals the number of cycles.  
It remains to show that $\A = \L$.

Let $\T$ denote the set of all (nonzero) complex row vectors
$\tau  \in \L$ such that  $\tau \notin \A$.
Our goal is to show that $\T$ is empty.

Suppose for the purpose of contradiction that $\T$ contains
a vector $\tau$. The product $\tau H$ is a linear combination
of the rows of $H$ whose coefficients are the entries in $\tau$.  
This linear combination
sums to 0, since $\tau \in \L$.  

For each cycle $C$, consider the $|C|$ entries in $\tau$ 
situated in the positions
matching the row positions in $R(C)$.
Replace one of these $|C|$
coefficients (call it $c$) by 0 and then subtract $c$ from
each of the remaining $|C|-1$  coefficients.
Since such replacements are made for every cycle $C$,
this yields a new vector $\tau' \in \T$ with the property that
$\tau' H$ is a linear combination of rows of $H$
corresponding to edges of disjoint {\em open} paths.
For example, the subsum of $\tau' H$ corresponding to an
open path such as
\[
2 \rightarrow 8 \rightarrow 6 \rightarrow 3 \rightarrow 7
\]
would have the form
\begin{equation}\label{eq:2.3}
d_1(v(2) - v(8)) + d_2(v(8) - v(6)) + d_3(v(6) - v(3))+
d_4(v(3) - v(7)).
\end{equation}

The only dependence relations among the vectors $v(0), \dots , v(k)$
are
\[
\alpha (v(0) + v(1) + \dots + v(k)) = 0, \quad \alpha \in \mathbb{C}.
\]
Since $\tau' H = 0$, 
the coefficients of the vectors $v(\ell)$ in the expansion of $\tau' H$
must all be equal.   

Let us first focus on the subsum in our example \eqref{eq:2.3}.
Expanding in $\tau' H$, this subsum becomes
\[
d_1 v(2) +(d_2-d_1)v(8)+(d_3-d_2)v(6)+(d_4-d_3)v(3) -d_4 v(7).
\]
Thus
\[
d_1 = d_2-d_1 = d_3-d_2 = d_4-d_3 = -d_4.
\]
The five equal members of this equality sum to zero, so all five must vanish.
Therefore  the $d_i$ all vanish.
This type of argument applies generally to each of the disjoint subsums of
$\tau' H$ corresponding to an open path,
showing that all entries of $\tau'$ vanish.   This
contradicts the fact that $\tau' \in \T$.
\end{proof}

Due to Theorem 2.1, the
nullity of $A(n,k)$ can be rapidly computed 
for very large values of the arguments $n,k$, 
using this function in Mathematica:

\begin{verbatim}
FastNullity[n_,k_]:= 
Length[FindCycle[Table[Mod[i-2+n,1+k]->Mod[i-1+n,k],{i,1,k}]
,k, All]]+Sum[If[Mod[i-2+n,1+k]==Mod[i-1+n,k],1,0],{i,1,k}]
\end{verbatim}

\section{Properties of the cycles in $G(n,k)$}

Fix a pair $n$, $k$.
The object of this section is to prove Theorem 3.9,
which shows that the cycles of $G(n,k)$ are ``translates" of each other.
A consequence of Theorem 3.9  is the nontrivial fact that the cycles have the 
same length.   We begin with a string of eight lemmas.

Recall that 
when $k$ or $k+1$ is explicitly employed as a subscript 
on some integer $\beta$,
we have defined  $\beta_{k}$ (resp. $\beta_{k+1}$) 
to be the least nonnegative residue
of $\beta$ modulo $k$ (resp. $k+1$).

\begin{lem}
Suppose that
\begin{equation}\label{eq:3.1}
a_1 \rightarrow a_2 \rightarrow \dots \rightarrow a_{\ell}
\end{equation}
is a path in $G(n,k)$.
Then
\begin{equation}\label{eq:3.2}
(a_1+1)_{k+1} \rightarrow \dots \rightarrow (a_{\ell} +1)_{k+1}
\end{equation}
is a path in $G(n,k)$ if and only if
none of $a_2, \dots, a_{\ell}$
equals $k-1$ and none of
$a_1, \dots, a_{\ell -1}$ equals $(n-3)_{k+1}$.
\end{lem}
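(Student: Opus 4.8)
The plan is to work directly from the edge description \eqref{eq:2.2}, which tells us exactly which arrows appear in $G(n,k)$: the edges are precisely $(i+n-2)_{k+1} \rightarrow (i+n-1)_{k}$ for $1 \le i \le k$. So an arrow $a \rightarrow b$ lies in $G(n,k)$ if and only if there is an index $i$ with $1 \le i \le k$ such that $a \equiv i+n-2 \pmod{k+1}$ and $b \equiv i+n-1 \pmod{k}$ simultaneously. First I would reformulate this: given that $a_1 \rightarrow a_2 \rightarrow \dots \rightarrow a_\ell$ is already a path, each consecutive arrow $a_j \rightarrow a_{j+1}$ uses some index $i_j$, and I want to understand when $(a_j+1)_{k+1} \rightarrow (a_{j+1}+1)_{k+1}$ is also an edge. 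The natural guess is that shifting the \emph{source} by $1$ mod $k+1$ corresponds to replacing $i_j$ by $i_j+1$; I would check that this candidate index then forces the \emph{target} to be $(a_{j+1}+1)_{k}$, and then reconcile the discrepancy between "$+1 \bmod{k+1}$" (what the lemma asks for the target in \eqref{eq:3.2}) and "$+1 \bmod k$" (what the edge relation naturally produces).

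The heart of the argument is a single-arrow lemma: for a fixed edge $a \rightarrow b$ of $G(n,k)$ arising from index $i$, the shifted pair $(a+1)_{k+1} \rightarrow (b+1)_{k+1}$ is an edge of $G(n,k)$ if and only if $i \le k-1$ (so that $i+1$ is still an admissible index, i.e. $a \neq (n-2)_{k+1}$, equivalently, since $a = (i+n-2)_{k+1}$, the condition $i \le k-1$ says $a \neq (k+n-2)_{k+1} = (n-3)_{k+1}$) \emph{and} $b \neq k-1$ (so that $(b+1)_k$, which is what index $i+1$ produces as a target, actually equals $(b+1)_{k+1}$ rather than wrapping to $0$; note $b \le k-1$ always, and if $b = k-1$ then $(b+1)_k = 0 \ne k = (b+1)_{k+1}$). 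I would verify both directions of this biconditional carefully: the "if" direction by exhibiting index $i+1$ and checking the two congruences; the "only if" direction by noting that the target of any edge with source $(a+1)_{k+1}$ is forced, so $(b+1)_{k+1}$ being a valid target pins down exactly these two constraints.

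Having the single-arrow statement, I would assemble the path version by applying it to each of the $\ell-1$ arrows $a_j \rightarrow a_{j+1}$ of \eqref{eq:3.1}. The arrow $a_j \rightarrow a_{j+1}$ has source $a_j$ and target $a_{j+1}$, so the shifted arrow exists iff $a_j \ne (n-3)_{k+1}$ and $a_{j+1} \ne k-1$. Taking the conjunction over $1 \le j \le \ell-1$ gives: no $a_j$ with $j \le \ell-1$ equals $(n-3)_{k+1}$, and no $a_{j+1}$ with $j \le \ell-1$ — that is, no $a_2, \dots, a_\ell$ — equals $k-1$. That is precisely the stated condition. The only subtlety to flag is that \eqref{eq:3.2} is asserted to be a path, so one must also confirm the shifted vertices are genuinely vertices of $G(n,k)$ and that consecutive shifted arrows chain up correctly; but chaining is automatic since the target of the $j$-th shifted arrow, $(a_{j+1}+1)_{k+1}$, is literally the source of the $(j+1)$-st shifted arrow, and all vertices lie in $\{0,\dots,k\}$ by construction.

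The main obstacle I anticipate is bookkeeping the two different moduli: the lemma phrases everything as $+1 \bmod (k+1)$, but the edge relation pairs a source mod $k+1$ with a target mod $k$, so the condition "$a_{j+1} \ne k-1$" is exactly the hypothesis under which "$+1$ mod $k$" and "$+1$ mod $k+1$" agree for the target. Getting this equivalence stated cleanly — and making sure the edge-count constraint $1 \le i \le k$ translates correctly into "$a_j \ne (n-3)_{k+1}$" via $a_j = (i+n-2)_{k+1}$ and $i+1 \le k$ — is the delicate part; once that is pinned down, the rest is a routine index chase over the $\ell-1$ arrows.
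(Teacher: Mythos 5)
Your proposal is correct and takes essentially the same route as the paper: the ``if'' direction is the identical index shift $E(i)\mapsto E(i+1)$ applied edge by edge, and your ``only if'' direction merely re-derives, via explicit index bookkeeping, the paper's observations that $k$ has in-degree $0$ and $(n-2)_{k+1}$ has out-degree $0$. Your isolation of the modulus discrepancy (the target $(b+1)_k$ versus $(b+1)_{k+1}$, reconciled exactly when $b\neq k-1$) is the right delicate point and is handled correctly.
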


\begin{proof}
Suppose that \eqref{eq:3.2} holds.   Then none of $a_2, \dots, a_{\ell}$
can equal $k-1$, since $k$ has in-degree $0$, and 
none of
$a_1, \dots, a_{\ell -1}$ can equal $(n-3)_{k+1}$,
since $(n-2)_{k+1}$ has out-degree $0$.

Conversely, if 
none of
$a_1, \dots, a_{\ell -1}$ equals $(n-3)_{k+1}$,
then by \eqref{eq:2.2}, we have $i<k$ for any edge 
$E(i):=  a_\nu \rightarrow a_{\nu+1}$ 
in the path \eqref{eq:3.1}.
If moreover
none of $a_2, \dots, a_{\ell}$
equals $k-1$, then $E(i+1)$ is the edge
$(a_\nu +1)_{k+1} \rightarrow (a_{\nu +1} +1)_{k+1}$, so \eqref{eq:3.2} follows.
\end{proof}

\begin{lem}
Suppose that
\begin{equation}\label{eq:3.3}
a_1 \rightarrow  \dots \rightarrow a_{\ell -1} \rightarrow k-1
\end{equation}
is a path in $G(n,k)$.
Then
\begin{equation}\label{eq:3.4}
(a_1+1)_{k+1} \rightarrow \dots \rightarrow (a_{\ell -1} +1)_{k+1}
\rightarrow 0
\end{equation}
is a path in $G(n,k)$ if and only if
none of
$a_1, \dots, a_{\ell -1}$ equals $(n-3)_{k+1}$.
\end{lem}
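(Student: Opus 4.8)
The plan is to treat \eqref{eq:3.3}--\eqref{eq:3.4} as the ``$a_{\ell}=k-1$'' companion of Lemma 3.1 and to reduce to that lemma. The one new feature is the behaviour of the shift $E(i)\mapsto E(i+1)$ on an edge that ends at $k-1$. By \eqref{eq:2.2}, $E(i)=(i+n-2)_{k+1}\to(i+n-1)_k$, so for $i<k$ the edge $E(i+1)$ advances the source by $1$ modulo $k+1$ and the target by $1$ modulo $k$; if the target $b$ of $E(i)$ satisfies $b\ne k-1$, then the target of $E(i+1)$ is $b+1=(b+1)_{k+1}$, which is exactly the ``$+1$ mod $k+1$'' of Lemma 3.1, but if $b=k-1$ the target of $E(i+1)$ becomes $(k)_k=0$. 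This is why the shift of a path terminating at $k-1$ terminates at $0$, and why Lemma 3.1 cannot be quoted directly for such a path.

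For the ``only if'' direction I would argue verbatim as in Lemma 3.1: if \eqref{eq:3.4} is a path, then each of the vertices $(a_1+1)_{k+1},\dots,(a_{\ell-1}+1)_{k+1}$ has an outgoing edge, hence is not the unique out-degree-$0$ vertex $(n-2)_{k+1}$, and this forces $a_j\ne(n-3)_{k+1}$ for $1\le j\le\ell-1$. For the ``if'' direction, assume none of $a_1,\dots,a_{\ell-1}$ equals $(n-3)_{k+1}$ and apply Lemma 3.1 to the truncated path $a_1\to\dots\to a_{\ell-1}$: the condition ``none of $a_1,\dots,a_{\ell-2}$ equals $(n-3)_{k+1}$'' is inherited from the hypothesis, and the condition ``none of $a_2,\dots,a_{\ell-1}$ equals $k-1$'' holds because \eqref{eq:3.3} is a path whose only occurrence of $k-1$ is at its endpoint. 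Lemma 3.1 then gives that $(a_1+1)_{k+1}\to\dots\to(a_{\ell-1}+1)_{k+1}$ is a path in $G(n,k)$, so it remains only to append the edge $(a_{\ell-1}+1)_{k+1}\to 0$.

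I expect the crux --- and the only place the hypothesis is genuinely used --- to be the existence of this last edge. Writing the edge $a_{\ell-1}\to k-1$ of \eqref{eq:3.3} as $E(i)$, its source $a_{\ell-1}$ differs from $(n-3)_{k+1}$, which by \eqref{eq:2.2} is the source of $E(k)$; hence $i<k$ and $E(i+1)$ is a bona fide edge of $G(n,k)$. Since the target of $E(i)$ is $k-1$, the shift computation of the first paragraph yields $E(i+1)=(a_{\ell-1}+1)_{k+1}\to 0$, as required, and concatenation with the path from Lemma 3.1 gives \eqref{eq:3.4}. The remaining point, that the resulting sequence is again a genuine path, is routine given the in-degree/out-degree description of $G(n,k)$ (the map $a\mapsto(a+1)_{k+1}$ is a bijection on $\{0,\dots,k\}$, and the one vertex that could collide with the new endpoint $0$ is $k$, which has in-degree $0$ and so occurs in \eqref{eq:3.3}, if at all, only as $a_1$), exactly in the spirit of the proof of Lemma 3.1.
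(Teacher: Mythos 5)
Your proposal is correct and follows essentially the same route as the paper: the key step in both is to write the final edge $a_{\ell-1}\rightarrow k-1$ as $E(i)$, use $a_{\ell-1}\neq(n-3)_{k+1}$ to conclude $i<k$, and observe that $E(i+1)$ then has source $(a_{\ell-1}+1)_{k+1}$ and target $(i+n)_k=0$, with Lemma 3.1 handling the initial segment. You are in fact somewhat more thorough than the paper, which writes out only the ``if'' direction and only the last edge.
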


\begin{proof}
If none of
$a_1, \dots, a_{\ell -1}$ equals $(n-3)_{k+1}$, then
$i < k$ for the edge
\[
E(i):= a_{\ell -1} \rightarrow k-1 =(i+n-1)_k,
\]
so that $E(i+1)$ is the edge
\[
(a_{\ell -1} +1)_{k+1} \rightarrow (i+n)_k = 0.
\]
\end{proof}

\begin{lem}
Suppose that
\begin{equation}\label{eq:3.5}
a_1 \rightarrow a_2 \rightarrow \dots \rightarrow a_{\ell}
\end{equation}
is a path in $G(n,k)$.
Then
\begin{equation}\label{eq:3.6}
(a_1-1)_{k+1} \rightarrow \dots \rightarrow (a_{\ell} -1)_{k+1}
\end{equation}
is a path in $G(n,k)$ if and only if
none of $a_2, \dots, a_{\ell}$
equals $0$ and none of
$a_1, \dots, a_{\ell -1}$ equals $(n-1)_{k+1}$.
\end{lem}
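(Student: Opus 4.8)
The plan is to mirror the proof of Lemma 3.3 almost verbatim, since Lemma 3.5 is the ``$-1$ shift'' analogue of the ``$+1$ shift'' handled there; the two obstructions simply swap roles. Recall from \eqref{eq:2.2} that the edges of $G(n,k)$ are $E(i): (i+n-2)_{k+1} \rightarrow (i+n-1)_k$ for $1 \le i \le k$, so that if $a_\nu \rightarrow a_{\nu+1}$ is the edge $E(i)$, then (when $2 \le i$) the edge $E(i-1)$ is $(a_\nu - 1)_{k+1} \rightarrow (a_{\nu+1}-1)_{k+1}$, \emph{provided} the arithmetic does not fall off either end. The point of the hypotheses is exactly to guarantee this: decreasing the index $i$ by $1$ is legal unless $i=1$, and $i=1$ corresponds precisely to the initial vertex being $(n-1)_{k+1}$; and the resulting target vertex $(i+n-2)_k$ is a genuine vertex residue unless $(i+n-1)_k = 0$, i.e.\ unless $a_{\nu+1}=0$ (since $0$ has, in this $-1$-shifted sense, no valid predecessor of the required form---the shift of an edge landing on $0$ would want to land on ``$k$ read mod $k$,'' which is $0$ again, but the initial-point bookkeeping shows $0$ cannot be a target of any $E(i)$ with $i \ge 1$ in the shifted graph).

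First I would prove the forward (``only if'') direction by the same in-degree/out-degree argument used in Lemma 3.1: if \eqref{eq:3.6} is a path in $G(n,k)$, then since the vertex $k$ has out-degree $1$ but $0$ has out-degree $0$ relative to the structure---more precisely, examine which vertex can fail to have a successor. Here the cleanest route is: the graph $G(n,k)$ has $k$ as the unique vertex of in-degree $0$ and $(n-2)_{k+1}$ as the unique vertex of out-degree $0$. If some $a_j$ ($2 \le j \le \ell$) equals $0$, then $(a_j - 1)_{k+1} = k$, which has in-degree $0$, so it cannot be an interior or terminal vertex of the path \eqref{eq:3.6}---contradiction. Similarly, if some $a_j$ ($1 \le j \le \ell-1$) equals $(n-1)_{k+1}$, then $(a_j-1)_{k+1} = (n-2)_{k+1}$, which has out-degree $0$, so it cannot be an initial or interior vertex of \eqref{eq:3.6}---contradiction. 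This disposes of ``only if.''

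For the converse, suppose none of $a_2,\dots,a_\ell$ equals $0$ and none of $a_1,\dots,a_{\ell-1}$ equals $(n-1)_{k+1}$. Take any edge $E(i): a_\nu \rightarrow a_{\nu+1}$ in \eqref{eq:3.5}, so $a_\nu = (i+n-2)_{k+1}$ and $a_{\nu+1} = (i+n-1)_k$. Since $a_\nu \ne (n-1)_{k+1}$ we have $i \ne 1$, hence $2 \le i \le k$, so $E(i-1)$ is a genuine edge of $G(n,k)$; by \eqref{eq:2.2} it is $(i+n-3)_{k+1} \rightarrow (i+n-2)_k$. The initial vertex is $(i+n-3)_{k+1} = (a_\nu - 1)_{k+1}$, as desired. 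For the terminal vertex: since $a_{\nu+1} = (i+n-1)_k \ne 0$, the residue $(i+n-1) \bmod k$ is nonzero, so $(i+n-2)_k = (i+n-1)_k - 1 = a_{\nu+1} - 1$; as $a_{\nu+1} \ne 0$ this equals $(a_{\nu+1}-1)_{k+1}$ as well (both sides lie in $\{0,\dots,k-1\}$ and agree). Thus $E(i-1)$ is exactly $(a_\nu - 1)_{k+1} \rightarrow (a_{\nu+1} - 1)_{k+1}$, and chaining these over $\nu = 1, \dots, \ell-1$ produces the path \eqref{eq:3.6}.

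The only subtlety---and the step I would check most carefully---is the terminal-vertex identification $(i+n-2)_k = (a_{\nu+1}-1)_{k+1}$: one must confirm that reducing $a_{\nu+1}-1$ mod $k$ and mod $k+1$ give the same value, which holds precisely because $a_{\nu+1} \ne 0$ forces $a_{\nu+1} - 1 \in \{0,1,\dots,k-1\}$, a range on which both reductions are the identity. There is also the boundary bookkeeping when $a_{\nu+1} = k-1$ (so $(i+n-1)_k$ would have to be... but in fact $k-1$ is a legitimate target, and $(k-1)-1 = k-2$ causes no trouble); the genuinely excluded target for the unshifted path, corresponding to the hypothesis ruling out $a_j = 0$, is the one that would force the predecessor edge index out of range on the $k$-modulus side. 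Since Lemmas 3.1--3.4 already establish the companion facts and this argument is their reflection, I expect no real obstacle beyond this modular-arithmetic care.
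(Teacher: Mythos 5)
Your proof is correct and follows essentially the same route as the paper's: the ``only if'' direction via the in-degree-$0$ vertex $k$ and the out-degree-$0$ vertex $(n-2)_{k+1}$, and the converse by shifting each edge $E(i)$ to $E(i-1)$ (legitimate because $a_\nu \ne (n-1)_{k+1}$ forces $i>1$, and $a_{\nu+1}\ne 0$ makes the mod-$k$ and mod-$(k+1)$ reductions of $a_{\nu+1}-1$ agree). You are in fact more explicit than the paper about the modular bookkeeping for the terminal vertex, which the paper simply asserts.
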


\begin{proof}
Suppose that \eqref{eq:3.6} holds.   Then none of $a_2, \dots, a_{\ell}$
can equal $0$, since $k$ has in-degree $0$, and
none of
$a_1, \dots, a_{\ell -1}$ can equal $(n-1)_{k+1}$,
since $(n-2)_{k+1}$ has out-degree $0$.

Conversely, if
none of
$a_1, \dots, a_{\ell -1}$ equals $(n-1)_{k+1}$,
then by \eqref{eq:2.2}, we have $i>1$ for any edge
$E(i):= a_\nu \rightarrow a_{\nu+1}$
in the path \eqref{eq:3.5}.
If moreover
none of $a_2, \dots, a_{\ell}$
equals $0$, then $E(i-1)$ is the edge
$(a_\nu -1)_{k+1} \rightarrow (a_{\nu +1} -1)_{k+1}$, so \eqref{eq:3.6} follows.
\end{proof}

\begin{lem}
Suppose that
\begin{equation}\label{eq:3.7}
a_1 \rightarrow  \dots \rightarrow a_{\ell -1} \rightarrow 0
\end{equation}
is a path in $G(n,k)$.
Then
\begin{equation}\label{eq:3.8}
(a_1-1)_{k+1} \rightarrow \dots \rightarrow (a_{\ell -1} -1)_{k+1}
\rightarrow k-1
\end{equation}
is a path in $G(n,k)$ if and only if
none of
$a_1, \dots, a_{\ell -1}$ equals $(n-1)_{k+1}$.
\end{lem}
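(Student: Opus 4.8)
The plan is to mirror the proof of Lemma 3.2 (which derived the ``$+1$'', terminus $k-1\mapsto 0$ statement from Lemma 3.1), using Lemma 3.3 in place of Lemma 3.1: the present lemma is its ``$-1$'', terminus $0\mapsto k-1$ counterpart. I would split into the two implications, and for the nontrivial (``if'') direction I would treat the initial segment $a_1\rightarrow\cdots\rightarrow a_{\ell-1}$ with Lemma 3.3 and then append the last edge $a_{\ell-1}\rightarrow 0$ by a direct computation with \eqref{eq:2.2}.

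For ``only if'', suppose \eqref{eq:3.8} is a path. Then each of $(a_1-1)_{k+1},\dots,(a_{\ell-1}-1)_{k+1}$ has an outgoing edge along \eqref{eq:3.8}, hence out-degree at least $1$ in $G(n,k)$. Since $(n-2)_{k+1}$ is the only vertex of $G(n,k)$ of out-degree $0$, no $(a_j-1)_{k+1}$ equals $(n-2)_{k+1}$; and since each $a_j$ lies in $\{0,1,\dots,k\}$, this says exactly that $a_j\neq(n-1)_{k+1}$ for $1\le j\le\ell-1$.

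For ``if'', assume none of $a_1,\dots,a_{\ell-1}$ equals $(n-1)_{k+1}$. I would first apply Lemma 3.3 to the sub-path $a_1\rightarrow\cdots\rightarrow a_{\ell-1}$ of \eqref{eq:3.7}: its two hypotheses hold, since none of $a_2,\dots,a_{\ell-1}$ is $0$ (the vertex $0$ is the terminus of \eqref{eq:3.7}, and no vertex repeats along a path in $G(n,k)$, every in- and out-degree being at most $1$) and none of $a_1,\dots,a_{\ell-2}$ is $(n-1)_{k+1}$ by assumption; hence $(a_1-1)_{k+1}\rightarrow\cdots\rightarrow(a_{\ell-1}-1)_{k+1}$ is a path. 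To append the final edge, write $a_{\ell-1}\rightarrow 0$ as $E(i)$, so that $a_{\ell-1}=(i+n-2)_{k+1}$ and $0=(i+n-1)_k$ by \eqref{eq:2.2}. If $i=1$ this would force $a_{\ell-1}=(n-1)_{k+1}$, contrary to assumption, so $i\ge 2$ and $E(i-1)$ is a genuine edge of $G(n,k)$; moreover $(i+n-1)_k=0$ gives $(i+n-2)_k=k-1$, so $E(i-1)$ is the edge $(i+n-3)_{k+1}\rightarrow(i+n-2)_k$, which is $(a_{\ell-1}-1)_{k+1}\rightarrow k-1$. Concatenating with the path produced by Lemma 3.3 yields \eqref{eq:3.8}.

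I do not expect a genuine obstacle here: the content is bookkeeping, entirely parallel to Lemmas 3.1--3.3. The one point that needs care is the index shift $i\mapsto i-1$ in the last step, where one must exclude $i=1$; this is precisely where the hypothesis $a_{\ell-1}\neq(n-1)_{k+1}$ enters, playing the role that ``$i<k$'' plays in Lemma 3.2. I would expect the finished proof to occupy only a few lines.
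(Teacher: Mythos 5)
Your proof is correct and follows essentially the same route as the paper, which likewise reduces the initial segment to Lemma 3.3 and handles the final edge by the index shift $E(i)\mapsto E(i-1)$, using $a_{\ell-1}\neq(n-1)_{k+1}$ to guarantee $i>1$. You are in fact more complete than the paper's own proof, which writes out only the ``if'' direction's last-edge computation and leaves the converse and the appeal to Lemma 3.3 implicit.
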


\begin{proof}
If none of
$a_1, \dots, a_{\ell -1}$ equals $(n-1)_{k+1}$, then
$i >1$ for the edge
\[
E(i):= a_{\ell -1} \rightarrow 0 =(i+n-1)_k,
\]
so that $E(i-1)$ is the edge
\[
(a_{\ell -1} -1)_{k+1} \rightarrow (i+n-2)_k = k-1.
\]
\end{proof}

Let $C$ denote a cycle in $G(n,k)$ of the form
\begin{equation}\label{eq:3.9}
a_1 \rightarrow  \dots \rightarrow a_{\ell} \rightarrow a_1.
\end{equation}
If for some integer $t$,  $G(n,k)$ has a cycle
\[
(a_1 +t)_{k+1} \rightarrow \dots \rightarrow (a_{\ell} +t)_{k+1}
\rightarrow (a_1 +t)_{k+1},
\]
denote this cycle by $C+t$.  We refer to $C+t$ as a ``translate" of $C$.
\begin{lem}
For the cycle $C$ in \eqref{eq:3.9},
$C+1$ is a cycle  if and only if $C$ avoids the vertices
$k-1$ and $(n-3)_{k+1}$,
and $C-1$ is a cycle if and only if $C$ avoids the vertices
$0$ and $(n-1)_{k+1}$.
\end{lem}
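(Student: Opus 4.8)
The plan is to deduce this directly from Lemmas 3.1 and 3.3 by splitting the cycle $C$ in \eqref{eq:3.9} into the genuine path $P\colon a_1 \rightarrow a_2 \rightarrow \dots \rightarrow a_{\ell}$, whose vertices are pairwise distinct, together with the single ``closing edge'' $a_{\ell} \rightarrow a_1$. First observe that, by the definition of a translate, ``$C+1$ is a cycle in $G(n,k)$'' means precisely that
\[
(a_1+1)_{k+1} \rightarrow \dots \rightarrow (a_{\ell}+1)_{k+1} \rightarrow (a_1+1)_{k+1}
\]
is a closed walk in $G(n,k)$; since $x \mapsto (x+1)_{k+1}$ is a bijection of $\{0,1,\dots,k\}$, the vertices $(a_i+1)_{k+1}$ are pairwise distinct, so any such closed walk is automatically a simple cycle. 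Hence ``$C+1$ is a cycle'' is equivalent to the conjunction of (i) ``$(a_1+1)_{k+1} \rightarrow \dots \rightarrow (a_{\ell}+1)_{k+1}$ is a path in $G(n,k)$'' and (ii) ``$(a_{\ell}+1)_{k+1} \rightarrow (a_1+1)_{k+1}$ is an edge of $G(n,k)$''.

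For (i), Lemma 3.1 applied to $P$ says it holds if and only if none of $a_2, \dots, a_{\ell}$ equals $k-1$ and none of $a_1, \dots, a_{\ell-1}$ equals $(n-3)_{k+1}$. For (ii), Lemma 3.1 applied to the one-edge path $a_{\ell} \rightarrow a_1$ says it holds if and only if $a_1 \neq k-1$ and $a_{\ell} \neq (n-3)_{k+1}$. (When $\ell = 1$ the path $P$ is the trivial one-vertex path and only (ii) is relevant.) Conjoining these conditions, the clause about $k-1$ now rules out all of $a_1, \dots, a_{\ell}$ and the clause about $(n-3)_{k+1}$ likewise rules out all of $a_1, \dots, a_{\ell}$, so the conjunction says exactly that $C$ avoids both $k-1$ and $(n-3)_{k+1}$. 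This is the first asserted equivalence.

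The second equivalence is obtained in the same manner, with Lemma 3.3 replacing Lemma 3.1: ``$C-1$ is a cycle'' is equivalent to the conjunction of ``$(a_1-1)_{k+1} \rightarrow \dots \rightarrow (a_{\ell}-1)_{k+1}$ is a path'' and ``$(a_{\ell}-1)_{k+1} \rightarrow (a_1-1)_{k+1}$ is an edge''; Lemma 3.3 applied to $P$ and to the one-edge path $a_{\ell} \rightarrow a_1$ turns this, after conjoining, into the statement that $C$ avoids both $0$ and $(n-1)_{k+1}$.

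I expect no real obstacle here, since the lemma is essentially a repackaging of Lemmas 3.1 and 3.3; the only place where care is needed is the index bookkeeping in the final conjunction — one must check that the ``interior'' index ranges $\{a_2,\dots,a_{\ell}\}$ and $\{a_1,\dots,a_{\ell-1}\}$ coming from $P$, together with the two endpoint conditions $a_1 \neq k-1$ and $a_{\ell} \neq (n-3)_{k+1}$ coming from the closing edge, combine to cover all of $\{a_1,\dots,a_{\ell}\}$, and not merely the interior of $C$.
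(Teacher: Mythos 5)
Your proposal is correct and takes the same route as the paper, whose entire proof is the one-line remark that the lemma ``follows from Lemmas 3.1 and 3.3''; you have simply made explicit the decomposition of the cycle into the open path $a_1\rightarrow\cdots\rightarrow a_\ell$ plus the closing edge, and checked that the two endpoint conditions from the closing edge combine with the interior conditions from Lemma 3.1 (resp.\ 3.3) to cover all of $a_1,\dots,a_\ell$. The bookkeeping, including the degenerate case $\ell=1$ and the observation that translation preserves distinctness of vertices, is handled correctly.
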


\begin{proof}
This follows from Lemmas 3.1 and 3.3.
\end{proof}

\begin{lem}
For the cycle $C$ in \eqref{eq:3.9},
$C+t$ is a cycle for each $t \in [-v,u]$,
where $u \ge 0$ is minimal for which at least
one of $\ k-1$, $(n-3)_{k+1}$ is a vertex in $C+u$,
and $v \ge 0$ is minimal for which at least one of
$\ 0$, $(n-1)_{k+1}$ is a vertex in $C-v$.
\end{lem}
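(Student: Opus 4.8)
The plan is to induct on $t$ in each of the two directions, peeling off one translation at a time and invoking Lemma 3.5 at every step. Consider first the forward direction $t \in [0,u]$. I would argue by induction that $C + t$ is a cycle for every $t$ with $0 \le t \le u$, and moreover (this is the strengthening needed to keep the induction going) that none of the cycles $C, C+1, \dots, C+(u-1)$ contains either of the vertices $k-1$ or $(n-3)_{k+1}$. The base case $t = 0$ is immediate since $C$ itself is a cycle and, by minimality of $u$, if $u \ge 1$ then $C = C+0$ avoids both special vertices. For the inductive step, suppose $C + t$ is a cycle for some $t < u$; by the minimality of $u$, the cycle $C + t$ avoids both $k-1$ and $(n-3)_{k+1}$, so Lemma 3.5 applies to $C+t$ and tells us that $(C+t) + 1 = C + (t+1)$ is a cycle. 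This completes the induction up to $t = u$, where $C+u$ is a cycle but may (and by minimality of $u$ does, when $u$ is finite) contain one of the forbidden vertices; that is permitted by the statement.

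The backward direction $t \in [-v, 0]$ is entirely symmetric, using the second half of Lemma 3.5: by induction on $s = 0, 1, \dots, v$, the translate $C - s$ is a cycle, and for $s < v$ it avoids both $0$ and $(n-1)_{k+1}$ by minimality of $v$, so Lemma 3.5 gives that $(C-s) - 1 = C - (s+1)$ is again a cycle. Combining the two inductions, $C + t$ is a cycle for every $t \in [-v, u]$, which is exactly the claim.

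The only subtlety — and the one place the argument could go wrong if stated carelessly — is the interplay between "minimal $u$ for which a forbidden vertex appears in $C+u$" and the hypothesis of Lemma 3.5, which requires the cycle being translated to avoid those vertices. The point is that minimality of $u$ guarantees precisely that $C+t$ for $t < u$ avoids $k-1$ and $(n-3)_{k+1}$, so the hypothesis is met at each step before the last; one must be careful to apply Lemma 3.5 to $C+t$ with $t \le u-1$ (not to $C+u$). A second, even more minor point: one should note that $u$ and $v$ are well defined, i.e. that such minimal nonnegative integers exist. Since translation by $k+1$ acts as the identity on the residues modulo $k+1$, after at most $k+1$ successive translations some vertex of $C$ must land on $k-1$ (indeed every residue class mod $k+1$ is hit), so $u \le k$; likewise $v \le k$. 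With $u$ and $v$ finite and the two inductions in hand, the proof is complete.
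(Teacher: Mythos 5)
Your proposal is correct and follows essentially the same route as the paper: iterate Lemma 3.5 one translation at a time, using minimality of $u$ (resp.\ $v$) to guarantee the hypothesis of Lemma 3.5 holds at every step before the last. Your termination argument (every residue mod $k+1$ is hit, so a forbidden vertex appears within $k$ steps) is a minor variant of the paper's (the iteration cannot continue forever, else some translate would be a cycle containing the vertex $k$); both are fine.
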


\begin{proof}
If $C$ avoids vertices $k-1$ and $(n-3)_{k+1}$,
then $C+1$ is a cycle by Lemma 3.5.  The translation
by $1$ can be interated but it cannot go on indefinitely, otherwise
given any vertex $a \in C$, the translate $C+(k-a)$ 
would be a cycle containing $k$,
a contradiction.  Thus the iteration must eventually reach the cycle
$C+u$, after which the process stops, by Lemma 3.5.
Similarly, translation by $-1$ can be iterated, with the process stopping
upon reaching the cycle $C-v$.
\end{proof}

\begin{lem}
For $G(n,k)$, the following statements are equivalent:

(A) There are no cycles.

(B) Both $\ k-1$ and $\ (n-3)_{k+1}$ lie in the tail.

(C) Both $\ 0$ and $\ (n-1)_{k+1}$  lie in the tail.

(D) Both $\ 0$ and $ k-1$  lie in the tail.
\end{lem}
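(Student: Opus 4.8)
The plan is to dispose of the easy implications first and then reduce the whole lemma to a single statement about the tail, which is where the real work lies.

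Since $G(n,k)$ is the disjoint union of the tail and some simple cycles, and since a cycle and the tail can share no vertex (every vertex has in-degree and out-degree at most $1$), (A) trivially implies each of (B), (C), (D): with no cycles present, all $k+1$ vertices lie on the tail. Conversely, if $G(n,k)$ had a cycle $C$, then by Lemma 3.6 the translate $C+u$ would be a cycle meeting $\{k-1,(n-3)_{k+1}\}$, contradicting (B), and the translate $C-v$ would be a cycle meeting $\{0,(n-1)_{k+1}\}$, contradicting (C). Thus (A) $\Leftrightarrow$ (B) and (A) $\Leftrightarrow$ (C), so the lemma comes down to proving (D) $\Rightarrow$ (A); and since (A) $\Leftrightarrow$ (B) is now available, it suffices to prove (D) $\Rightarrow$ (B): \emph{assuming $0$ and $k-1$ both lie on the tail, show that $(n-3)_{k+1}$ also lies on the tail.}

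The approach is to walk backward along the tail from $0$ and from $k-1$ at the same time, tracking the in-edges via \eqref{eq:2.2}. The in-edge of $k-1$ is $E(i_0)$ with $i_0\equiv -n\pmod k$ and $1\le i_0\le k$, and the in-edge of $0$ is $E(i_0+1)$ if $i_0\le k-1$ and $E(1)$ if $i_0=k$. In the degenerate case $i_0=k$ (that is, $n\equiv 0\pmod k$) one has $E(k)=(n-3)_{k+1}\to(n-1)_k=(n-3)_{k+1}\to k-1$, so $(n-3)_{k+1}$ is the predecessor of $k-1$; as $k-1$ lies on the tail but is not its initial vertex $k$, this predecessor must lie on the tail, which is (B). (If $(n-3)_{k+1}$ equalled $k-1$ the in-edge would be a loop, putting $k-1$ on a cycle contrary to hypothesis, so this does not arise; the same remark will apply below.) When $i_0\le k-1$, form the backward walks $a_0=k-1,a_1,a_2,\dots$ and $b_0=0,b_1,b_2,\dots$, where $a_{j+1},b_{j+1}$ are the predecessors of $a_j,b_j$; each is a simple path along the tail that reaches $k$ after finitely many steps, say $t_a$ and $t_b$. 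A short computation with \eqref{eq:2.2} shows that so long as the in-edge of $a_j$ carries index $\le k-2$, the in-edge of $b_j$ carries the next index, whence $b_{j+1}=(a_{j+1}+1)_{k+1}$; thus $b_j=(a_j+1)_{k+1}$ for every $j\ge 1$ the recursion reaches. The first time the in-edge of some $a_j$ carries index $k-1$ or $k$, the pertinent in-edge (into $b_j$ in the first case, into $a_j$ in the second) is $E(k)$, whose source $(n-3)_{k+1}$ is then the tail-predecessor of a tail vertex, giving (B). If that index never occurs, the identity $b_j=(a_j+1)_{k+1}$ persists to $j=t_a$ and $j=t_b$; but then $a_{t_a}=k=b_{t_b}$ together with this identity are incompatible—comparing $t_a$ with $t_b$ forces either a repeated vertex on one of the walks or the false congruence $k\equiv 0\pmod{k+1}$. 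Hence the index $k-1$ or $k$ must occur, and (B) holds in every case.

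The main obstacle is precisely this last step: coordinating the two intertwined backward walks and showing they cannot both exhaust themselves on the tail without the edge $E(k)$ intervening. The delicate bookkeeping is that in-edge indices live modulo $k$ while vertices live modulo $k+1$, so one must check that the single off-by-one discrepancy between the targets $k-1$ and $0$ is exactly cancelled by the index wrapping from $k$ back to $1$; once the identity $b_j=(a_j+1)_{k+1}$ is in place, the contradiction (and hence (B)) is forced.
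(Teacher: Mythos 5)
Your proof is correct, and its easy parts --- that (A) implies everything because a cycle-free graph puts all $k+1$ vertices on the tail, and the use of Lemma 3.6 to get $\neg(\mathrm{A})\Rightarrow\neg(\mathrm{B})$ and $\neg(\mathrm{A})\Rightarrow\neg(\mathrm{C})$ --- coincide with the paper's. Where you genuinely diverge is the remaining implication. The paper proves $\neg(\mathrm{A})\Rightarrow\neg(\mathrm{D})$ directly: if a cycle exists and $k-1$ is in the tail, the already-established falsity of (B) forces $(n-3)_{k+1}$ out of the tail, so Lemma 3.2 (with Lemma 3.1) applies to the initial tail segment $k\rightarrow a_2\rightarrow\dots\rightarrow k-1$ and produces its $+1$-translate $0\rightarrow(a_2+1)_{k+1}\rightarrow\dots\rightarrow 0$, a cycle through $0$, whence $0$ is not in the tail. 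You instead prove $(\mathrm{D})\Rightarrow(\mathrm{B})$ from scratch by running backward walks from $k-1$ and from $0$ and tracking in-edge indices via \eqref{eq:2.2}: your identity $b_j=(a_j+1)_{k+1}$ is exactly the $+1$-translation property of Lemmas 3.1--3.2 re-derived in reverse, and the appearance of index $k$ (whose source is $(n-3)_{k+1}$) is exactly the obstruction those lemmas isolate. Your terminal contradiction, though only sketched, does check out: if the identity survives to $\min(t_a,t_b)$, then $t_a\le t_b$ gives $b_{t_a}=(k+1)_{k+1}=0=b_0$ with $t_a\ge 1$, while $t_a>t_b$ gives $a_{t_b}=k-1=a_0$ with $t_b\ge 1$, either way violating simplicity of the tail; and the inductive step does need (and has) $a_j\ne k-1$ for $j\ge 1$ so that $b_j=a_j+1$ lands in $[1,k-1]$. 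So the argument closes, but at the cost of redoing, with delicate mod-$k$ versus mod-$(k+1)$ bookkeeping, work the paper has already packaged in Section 3; the paper's route is shorter precisely because it reuses Lemma 3.2 rather than reproving the translation mechanism inside this lemma.
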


\begin{proof}
Clearly (A) implies (B), (C), and (D).

Conversely, suppose that (A) is false, so that there exists a cycle $C$.
Then by Lemma 3.6, at least one of
$k-1$, $(n-3)_{k+1}$ lies in $C+u$, and at least one of
$0$, $(n-1)_{k+1}$ lies in $C-v$, so (B) and (C) are false.
It remains to prove that (D) is false.  Assuming that $k-1$ lies
in the tail, we must show that $0$ lies in a cycle.
The tail must have an initial segment of the form
\[
k \rightarrow a_2 \rightarrow \dots \rightarrow a_{\ell -1} 
\rightarrow k-1.
\]
Since (B) is false, $(n-3)_{k+1}$ does not lie in the tail.
Thus by Lemma 3.2,  $G(n,k)$ has the cycle
\[
0 \rightarrow  (a_2+1)_{k+1} \rightarrow \dots \rightarrow 
(a_{\ell -1} +1)_{k+1} \rightarrow 0.
\]
\end{proof}

\begin{lem}
Suppose that neither $0$ nor $k-1$ lies in the tail of $G(n,k)$.
Then $G(n,k)$ has a cycle $C'$ containing both $k-1$ and $(n-3)_{k+1}$
and a cycle $C''$ containing both $0$ and $(n-1)_{k+1}$.
Moreover, $C'$ is longer than the tail.
\end{lem}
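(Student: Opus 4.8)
The plan is to build $C'$ and $C''$ explicitly as the $\pm 1$-translates of the tail $T$ of $G(n,k)$, using Lemmas 3.1 and 3.3 for the translations and the in-degree/out-degree count from Section 2 to keep track of endpoints. Write $T:\ k\to b_2\to\cdots\to (n-2)_{k+1}$. First I would dispose of the easy reductions: since $k-1$ and $0$ are not vertices of $T$, and neither of them equals $k$ or the terminal vertex $(n-2)_{k+1}$ of $T$ (if $k-1$ or $0$ equalled $(n-2)_{k+1}$ it would lie in $T$), each of $k-1$ and $0$ has in-degree and out-degree $1$ and hence lies on a unique simple cycle; call these $C'$ (through $k-1$) and $C''$ (through $0$). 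If $T$ is the trivial tail $k=(n-2)_{k+1}$, then $(n-3)_{k+1}=k-1$ and $(n-1)_{k+1}=0$, so $C'$ and $C''$ already contain the required vertices and $|C'|\ge 1>0=|T|$; so from now on I assume $T$ is nontrivial, hence $(n-2)_{k+1}\notin\{0,k\}$.

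The core of the argument is the claim that $(n-1)_{k+1}$ and $(n-3)_{k+1}$ do not lie in $T$ at all, and I would prove it by contradiction. Suppose $(n-1)_{k+1}\in T$. It is not the terminal vertex $(n-2)_{k+1}$, and it is not $k$ (if $(n-1)_{k+1}=k$ then $n\equiv 0\pmod{k+1}$, so $(n-2)_{k+1}=k-1\in T$, contrary to hypothesis), so it is an interior vertex of $T$ and the initial segment $T_1:\ k\to\cdots\to(n-1)_{k+1}$ is a path. Lemma 3.3 applies to $T_1$: no vertex after the first equals $0$ (as $0\notin T$) and no vertex before the last equals $(n-1)_{k+1}$ (since $T_1$ is simple and ends there). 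So translating $T_1$ down by $1$ yields a path from $(k-1)_{k+1}=k-1$ to $((n-1)_{k+1}-1)_{k+1}=(n-2)_{k+1}$. But $(n-2)_{k+1}$ has out-degree $0$ and in-degree $1$, so tracing predecessors backward from it is forced and cannot close into a loop (that would put $(n-2)_{k+1}$ on a cycle); hence the unique maximal path of $G(n,k)$ ending at $(n-2)_{k+1}$ is $T$ itself, the translated path is a terminal subpath of $T$, and therefore $k-1\in T$ --- a contradiction. The mirror argument, taking the initial segment of $T$ ending at $(n-3)_{k+1}$, translating it up by $1$ via Lemma 3.1, and obtaining a path from $0$ to $(n-2)_{k+1}$, forces $0\in T$ and thus shows $(n-3)_{k+1}\notin T$. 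In particular $(n-1)_{k+1}$ and $(n-3)_{k+1}$ now lie on cycles.

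With that in hand the remaining assertions fall out by translating all of $T$. Translating $T$ down by $1$ is legal by Lemma 3.3 (no tail vertex equals $0$, and no tail vertex equals $(n-1)_{k+1}$) and produces a path $P$ from $k-1$ to $(n-3)_{k+1}$; since $P$ starts at $k-1$ and every vertex has out-degree at most $1$, following $P$ forward never leaves the cycle $C'$, so $(n-3)_{k+1}\in C'$. Symmetrically, translating $T$ up by $1$ (legal by Lemma 3.1) gives a path from $0$ to $(n-1)_{k+1}$ lying in $C''$, so $(n-1)_{k+1}\in C''$. Finally, $P$ is a simple path inside $C'$ with $|P|=|T|$ edges, running from $k-1$ to $(n-3)_{k+1}$, and $(n-3)_{k+1}\ne k-1$ because equality would force $n\equiv 1\pmod{k+1}$, the trivial tail; so $P$ omits at least the arc of $C'$ returning from $(n-3)_{k+1}$ to $k-1$, whence $|C'|\ge |T|+1>|T|$.

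The step I expect to be the main obstacle is the middle one, ruling $(n-1)_{k+1}$ and $(n-3)_{k+1}$ out of $T$; the rest is routine translation together with the remark that out-degree at most $1$ confines a forward-traced path to the cycle it starts on. The crucial device there is that a suitable initial segment of $T$, once translated, terminates at the unique vertex of out-degree $0$, namely $(n-2)_{k+1}$, and must therefore be a subpath of $T$ --- which is exactly what converts ``$(n-1)_{k+1}\in T$'' or ``$(n-3)_{k+1}\in T$'' into the forbidden ``$k-1\in T$'' or ``$0\in T$''. Along the way one also has to peel off the trivial-tail case and the coincidences $(n-1)_{k+1}=k$ and $(n-3)_{k+1}=k$ separately, but each of these collapses immediately to ``$k-1$ or $0$ is the terminal vertex of $T$'', which contradicts the hypothesis.
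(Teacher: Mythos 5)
Your proof is correct, and it runs through the same machinery as the paper (the $\pm 1$ translation lemmas 3.1--3.4 plus the degree structure of $G(n,k)$), but in the dual direction. The paper starts from the cycle through $k-1$, written as $k-1\to a_2\to\cdots\to a_{\ell-1}\to k-1$, and argues: if $(n-3)_{k+1}$ were not on it, Lemma 3.2 would translate the whole cycle up by $1$ into a path $k\to\cdots\to 0$, which is forced to be the tail, contradicting $0\notin T$; the length claim then comes for free because the paper exhibits the tail itself as the translate of a \emph{proper} initial arc of that cycle ending at $(n-3)_{k+1}$ (with the degenerate case $(n-3)_{k+1}=k-1$ giving a length-$0$ tail). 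You instead translate pieces of the tail: first ruling $(n-1)_{k+1}$ and $(n-3)_{k+1}$ out of $T$ by showing a translated initial segment of $T$ would terminate at the out-degree-$0$ vertex $(n-2)_{k+1}$ and hence be a terminal subpath of $T$, forcing $k-1\in T$ or $0\in T$; then translating all of $T$ into $C'$ and $C''$ to locate $(n-3)_{k+1}$ and $(n-1)_{k+1}$ on the correct cycles and to embed a simple path of length $|T|$ with distinct endpoints inside $C'$, giving $|C'|>|T|$. Both arguments are sound; the paper's is a bit shorter because it never needs the intermediate step of locating the key vertices outside the tail, while yours isolates that step explicitly and reuses it for both the membership claims and the length bound. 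Your handling of the edge cases (trivial tail, and the coincidences $(n-1)_{k+1}=k$, $(n-3)_{k+1}=k$) matches what the paper needs and is done carefully.
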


\begin{proof}
The cycle containing $k-1$ has the form
\[
k-1 \rightarrow a_2 \rightarrow \dots \rightarrow a_{\ell -1} 
\rightarrow k-1.
\]
It must be that $(n-3)_{k+1}$ lies in this cycle, for otherwise, by Lemma 3.2,
the tail would begin
\[
k \rightarrow (a_2 +1)_{k+1} \rightarrow \dots \rightarrow
(a_{\ell -1} +1)_{k+1} \rightarrow 0,
\]
which contradicts the fact that $0$ is not in the tail.

To see that this cycle is longer than the tail, note that if
$(n-3)_{k+1} = a_i$, then the full tail is
\[
k \rightarrow (a_2 +1)_{k+1} \rightarrow \dots \rightarrow
(a_{i} +1)_{k+1} =(n-2)_{k+1},
\]
which is shorter than the cycle.   If on the other hand
$(n-3)_{k+1} = k-1$, then $(n-2)_{k+1} = k$,
wherein the tail has length $0$.

We now consider the cycle containing $0$.   
For some $\ell$, this cycle has the form
\[
0 \rightarrow a_2 \rightarrow \dots \rightarrow a_{\ell -1} 
\rightarrow 0.
\]
It must be that $(n-1)_{k+1}$ lies in this cycle, for otherwise, by Lemma 3.4,
the tail would begin
\[
k \rightarrow (a_2 -1)_{k+1} \rightarrow \dots \rightarrow
(a_{\ell -1} -1)_{k+1} \rightarrow k-1,
\]
which contradicts the fact that $k-1$ is not in the tail.
\end{proof}

\begin{thm}
For any fixed pair $k$, $n$,  all cycles in $G(n,k)$ are contiguous
translates of each other.  In particular, these cycles  have the same length.
\end{thm}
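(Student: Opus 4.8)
The plan is to combine Lemma 3.6, which says a cycle may be translated in either direction until it meets one of the ``blocking'' vertices, with Lemma 3.8, which pins those blocking vertices down inside one distinguished cycle. If $G(n,k)$ has no cycles the assertion is vacuous, so assume a cycle exists. Then by Lemma 3.7 neither $0$ nor $k-1$ lies in the tail, so Lemma 3.8 supplies a cycle $C'$ containing both $k-1$ and $(n-3)_{k+1}$ and a cycle $C''$ containing both $0$ and $(n-1)_{k+1}$. The first point to record is that $C'$ is the \emph{unique} cycle of $G(n,k)$ containing $k-1$, and likewise the unique one containing $(n-3)_{k+1}$: distinct cycles of $G(n,k)$ are vertex-disjoint and are disjoint from the tail, so each vertex lies in at most one cycle. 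Symmetrically $C''$ is the unique cycle through $0$ and the unique cycle through $(n-1)_{k+1}$.

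Next I would show that every cycle is an ``upward'' translate of $C'$, with all intermediate translates again cycles. Let $C$ be an arbitrary cycle. By Lemma 3.6 there is a minimal $u \ge 0$ such that $C, C+1, \dots, C+u$ are all cycles and $C+u$ contains $k-1$ or $(n-3)_{k+1}$. Since $C+u$ is a cycle meeting $\{k-1, (n-3)_{k+1}\}$ and $C'$ is the only such cycle, $C+u = C'$; thus $C = C' - u$ and $C' - u, C' - u + 1, \dots, C'$ are all cycles. (If $C = C'$ this just gives $u = 0$.) Running the same argument with translation by $-1$, with the vertices $0$ and $(n-1)_{k+1}$ in place of $k-1$ and $(n-3)_{k+1}$, and with $C''$ in place of $C'$, shows $C = C'' + v$ for some $v \ge 0$ with $C'', C'' + 1, \dots, C'' + v$ all cycles.

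To conclude, take any two cycles and write them as $C_1 = C' - u_1$ and $C_2 = C' - u_2$ with, say, $u_1 \le u_2$. By the previous paragraph all of $C_2 = C' - u_2, C' - u_2 + 1, \dots, C' - u_1 = C_1$ are cycles, so $C_1$ and $C_2$ are joined by a chain of cycles each obtained from the previous by translating by $+1$; this is exactly the statement that they are contiguous translates of one another. Finally, a translate $(a_1 + t)_{k+1} \rightarrow \dots \rightarrow (a_\ell + t)_{k+1} \rightarrow (a_1 + t)_{k+1}$ of the cycle $a_1 \rightarrow \dots \rightarrow a_\ell \rightarrow a_1$ has the same number of edges, so all cycles have a common length.

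I expect the one place that needs care is the middle paragraph: one must use the fact (implicit in Lemma 3.6) that the upward iteration terminates at an actual \emph{cycle} $C+u$, and then invoke vertex-disjointness to \emph{identify} $C+u$ with $C'$, rather than merely noting that $C+u$ meets $C'$. Once the uniqueness of $C'$ and $C''$ is established, the rest is bookkeeping.
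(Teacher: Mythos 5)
There is a genuine gap at the very first step. You write: ``assume a cycle exists. Then by Lemma 3.7 neither $0$ nor $k-1$ lies in the tail.'' That is a misreading of Lemma 3.7. The lemma says that the \emph{conjunction} ``both $0$ and $k-1$ lie in the tail'' is equivalent to the absence of cycles; its negation only tells you that \emph{at least one} of $0$, $k-1$ is outside the tail. It is entirely possible for cycles to exist while exactly one of $0$, $k-1$ sits in the tail --- indeed this happens systematically: in the shedding process of Section 6, for $1 \le \omega < f$ the graph $G(n+\omega,k)$ has $\omega \ge 1$ cycles and $k-1$ lies in the tail $T_\omega$ (and Lemmas 4.3 and 4.4 are devoted precisely to these configurations). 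In those cases the hypothesis of Lemma 3.8 fails, so the cycles $C'$ and $C''$ on which your whole argument rests need not exist, and your uniqueness claim (``$C'$ is the only cycle meeting $\{k-1,(n-3)_{k+1}\}$'') has no referent.

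The remainder of your argument --- translate any cycle upward via Lemma 3.6 until it hits a blocking vertex, identify the terminal translate with a single distinguished cycle by vertex-disjointness, and conclude contiguity --- is sound and is exactly the paper's argument in its third case (neither $0$ nor $k-1$ in the tail). To repair the proof you must handle the other two cases, as the paper does. When $k-1$ lies in the tail, no cycle contains $k-1$, so Lemma 3.6 forces every $C+u$ to contain $(n-3)_{k+1}$; the unique cycle through $(n-3)_{k+1}$ then plays the role of your $C'$. Symmetrically, when $0$ lies in the tail one translates downward and uses the unique cycle through $(n-1)_{k+1}$. Only in the remaining case does Lemma 3.8 apply and supply the cycle $C'$ containing both $k-1$ and $(n-3)_{k+1}$. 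Without this case split (or some other justification that at most one cycle meets $\{k-1,(n-3)_{k+1}\}$ in \emph{all} configurations), the identification $C+u = C'$ is unjustified.
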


\begin{proof}
Let $C_1$ and $C_2$ be two different cycles in $G(n,k)$.
We will consider three cases: when $0$ is in the tail,
when $k-1$ is in the tail, and when neither $0$ nor $k-1$
is in the tail.

First suppose that $k-1$ lies in the tail.
By Lemma 3.6,  there are nonnegative integers $u_1$ and $u_2$ for which
the translates $C_1 + u_1$ and $C_2 + u_2$ both contain
$(n-3)_{k+1}$, while $C_1 + \alpha$ is a cycle for all 
$0 \le \alpha \le u_1$ and 
$C_2 + \beta$ is a cycle for all
$0 \le \beta \le u_2$.
Since $C_1 + u_1$ and $C_2 + u_2$ are not disjoint, we have
$C_1 + u_1  = C_2 + u_2$, so that 
$C_2 = C_1 + (u_1 - u_2)$.  Without loss of generality, $u_1 \ge u_2$.
Thus $C_2$ is one of the contiguous cycles $C_1 + \alpha$ listed above.

Next suppose that $0$ lies in the tail.
By Lemma 3.6,  there are nonnegative integers $v_1$ and $v_2$ for which
the translates $C_1 - v_1$ and $C_2 - v_2$ both contain
$(n-1)_{k+1}$, while $C_1 - \alpha$ is a cycle for all
$0 \le \alpha \le v_1$ and
$C_2 - \beta$ is a cycle for all
$0 \le \beta \le v_2$.
Since $C_1 - v_1$ and $C_2 - v_2$ are not disjoint, we have
$C_1 - v_1  = C_2 - v_2$, so that
$C_2 = C_1 - (v_1 - v_2)$.  Without loss of generality, $v_1 \ge v_2$.
Thus $C_2$ is one of the contiguous cycles $C_1 - \alpha$ listed above.

Finally suppose that neither $0$ nor $k-1$ lies in the tail.
By Lemma 3.6 and Lemma 3.8, there exist nonnegative 
integers $u_1$ and $u_2$ for which
the translates $C_1 + u_1$ and $C_2 + u_2$ are both equal to
the cycle $C'$ defined in Lemma 3.8,
while $C_1 + \alpha$ is a cycle for all
$0 \le \alpha \le u_1$ and
$C_2 + \beta$ is a cycle for all
$0 \le \beta \le u_2$.
Thus,
$C_2 = C_1 + (u_1 - u_2)$, where without loss of generality, $u_1 \ge u_2$.
Thus $C_2$ is one of the contiguous cycles $C_1 + \alpha$ listed above.
\end{proof}

\section{Properties of the tail in $G(n,k)$}

\begin{lem}
Let $C$ be any cycle in $G(n,k)$.
If $0$ or $k-1$ is in the tail of $G(n,k)$,
then the tail
is at least as long as $C$.
\end{lem}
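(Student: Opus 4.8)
The statement is vacuous unless $G(n,k)$ has at least one cycle, so assume it does. By Lemma 3.7 this forces conditions (B), (C), (D) all to fail, and I would split into the two cases permitted by the hypothesis: $k-1$ lies in the tail, or $0$ lies in the tail. In both cases the common idea is that the portion of the tail running from its initial vertex $k$ to the special vertex ($k-1$ or $0$) assumed to lie in the tail can be ``wrapped around'' by the index shift of Lemma 3.2 (resp. Lemma 3.4) into a genuine simple cycle of the \emph{same} length; Theorem 3.9, which says all cycles of $G(n,k)$ have equal length, then transfers that length to the arbitrary cycle $C$.

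Concretely, suppose first that $k-1$ lies in the tail. Since (B) fails, $(n-3)_{k+1}$ must then lie in a cycle rather than in the tail. Let $T' : k = v_0 \to v_1 \to \cdots \to v_r = k-1$ be the initial segment of the tail ending at $k-1$, with $r = |T'| \ge 1$ because $k \neq k-1$. All of $v_0,\dots,v_{r-1}$ lie in the tail, hence none of them equals $(n-3)_{k+1}$, so Lemma 3.2 applies to the path $v_0 \to \cdots \to v_{r-1} \to k-1$ and produces the walk $(v_0+1)_{k+1} \to \cdots \to (v_{r-1}+1)_{k+1} \to 0$. Since $(v_0+1)_{k+1}=0$, this is a closed walk of length $r$, and because $k$ has in-degree $0$ each of $v_1,\dots,v_{r-1}$ is different from $k$, so the shifted intermediate vertices are pairwise distinct and nonzero; hence it is a simple cycle of length $r = |T'|$. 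By Theorem 3.9 we conclude $|C| = r = |T'|$, which is at most the length of the tail.

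The case where $0$ lies in the tail is symmetric: now (C) fails, so $(n-1)_{k+1}$ lies in a cycle; taking the initial segment $T'' : k = v_0 \to \cdots \to v_s = 0$ of the tail ending at $0$ and applying Lemma 3.4 to $v_0 \to \cdots \to v_{s-1} \to 0$ (whose non-terminal vertices lie in the tail and so avoid $(n-1)_{k+1}$) produces, using $(v_0-1)_{k+1} = k-1$, a simple cycle of length $s = |T''|$, whence $|C| = |T''|$ is again at most the tail length. The only genuinely delicate step is the verification of the avoidance hypotheses of Lemmas 3.2 and 3.4 — and that is precisely where Lemma 3.7 is invoked, to guarantee $(n-3)_{k+1}$ (respectively $(n-1)_{k+1}$) has been pushed out of the tail into a cycle; everything else is bookkeeping with path lengths under the shift $x \mapsto (x\pm 1)_{k+1}$.
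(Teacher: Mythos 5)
Your proof is correct and follows essentially the same route as the paper: in each case you take the initial segment of the tail ending at $k-1$ (resp.\ $0$), use Lemma 3.7 to verify the avoidance hypothesis, shift by $\pm 1$ via Lemma 3.2 (resp.\ Lemma 3.4) to produce a cycle of the same length, and invoke Theorem 3.9 to transfer that length to $C$. The extra checks you include (that the shifted walk is a simple cycle, that the segment is nonempty) are harmless bookkeeping the paper leaves implicit.
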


\begin{proof}
First suppose that $k-1$ is in the tail. Then
the tail has an initial segment of the form
\[
k \rightarrow a_2 \rightarrow \dots \rightarrow a_{\ell -1}
\rightarrow k-1.
\]
This path has length $\ell -1$.
By Lemma 3.7, $(n-3)_{k+1}$ is not in the tail, so by Lemma 3.2,
$G(n,k)$ has the following cycle of length $\ell -1$:
\[
0 \rightarrow (a_2+1)_{k+1}  \rightarrow \dots \rightarrow 
(a_{\ell -1}+1)_{k+1} \rightarrow 0.
\]
Since all cycles have the same length by Theorem 3.9,
the tail is at least as long as $C$.

Now suppose that $0$ is in the tail. Then for some $\ell$,
the tail has an initial segment of the form
\[
k \rightarrow a_2 \rightarrow \dots \rightarrow a_{\ell -1}
\rightarrow 0.
\]
This path has length $\ell -1$.
By Lemma 3.7, $(n-1)_{k+1}$ is not in the tail, so by Lemma 3.4,
$G(n,k)$ has the following cycle of length $\ell -1$:
\[
k-1 \rightarrow (a_2-1)_{k+1}  \rightarrow \dots \rightarrow 
(a_{\ell -1}-1)_{k+1} \rightarrow k-1.
\]
Thus again, the tail is at least as long as $C$.
\end{proof}

\begin{lem}
Suppose that neither $0$ nor $k-1$ lies in the tail of $G(n,k)$.
Let $q$ denote the lengths of the cycles $C'$ and $C''$ defined in
Lemma 3.8.  Write $k=fq+m$, where $f$ is the floor $[k/q]$ and
$0 \le m < q$.   Then $G(n,k)$ consists of the $f$ cycles
\begin{equation} \label{eq: 4.1}
C', C' -1, \dots, C'-(f-1)
\end{equation}
together with a tail $(C' +1)^*$ of length $m$,
where $(C' +1)^*$ denotes the segment of $(C' +1)$ beginning with $k$
and ending with $(n-2)_{k+1}$.
At the same time,
$G(n,k)$ consists of the $f$ cycles
\begin{equation} \label{eq: 4.2}
C'', C'' +1, \dots, C''+(f-1)
\end{equation}
together with a tail $(C'' -1)^*$ of length $m$,
where $(C'' -1)^*$ denotes the segment of $(C'' -1)$ beginning with $k$
and ending with $(n-2)_{k+1}$.
As a consequence, $C' = C'' +(f-1)$ and $(C' +1)^* = (C'' -1)^*$.
\end{lem}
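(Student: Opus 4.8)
The plan is to determine the structure of $G(n,k)$ completely and then read off the two claimed identities. Since neither $0$ nor $k-1$ lies in the tail, Lemma 3.8 supplies the cycle $C'$, which contains $k-1$ and $(n-3)_{k+1}$ and is longer than the tail, and the cycle $C''$, which contains $0$ and $(n-1)_{k+1}$; by Theorem 3.9 every cycle of $G(n,k)$ has length $q$ and the cycles are contiguous translates of one another. I would proceed in four steps: (i) anchor the chain of translates at $C'$ and at $C''$ using Lemma 3.5; (ii) count vertices to show there are exactly $f$ cycles and a tail of length $m$; (iii) identify the tail with $(C'+1)^{*}$ and with $(C''-1)^{*}$; (iv) deduce $C'=C''+(f-1)$ and $(C'+1)^{*}=(C''-1)^{*}$.

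Step (i). Since $C'$ contains $k-1$, Lemma 3.5 shows that $C'+1$ is not a cycle, so by Lemma 3.6 the translates $C',C'-1,\dots,C'-v$ are all cycles, where $v\ge 0$ is least with $C'-v$ containing $0$ or $(n-1)_{k+1}$. Applying Lemma 3.5 to $C'-v$ shows $C'-(v+1)$ is not a cycle, and since by Theorem 3.9 all cycles are contiguous translates of one another, these $v+1$ cycles are exactly the cycles of $G(n,k)$. Finally $C''$ is a cycle containing $0$, so $C''=C'-j$ for some $0\le j\le v$, and minimality of $v$ forces $j=v$; hence $C''=C'-v$.

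Step (ii). By the structure of $G(n,k)$ recorded in Section 2 the cycles are pairwise vertex-disjoint, each uses $q$ vertices, and none uses the vertex $k$ (which has in-degree $0$ and so lies on the tail). Thus the $k$ vertices $0,1,\dots,k-1$ are partitioned into the $(v+1)q$ cycle vertices together with the tail vertices other than $k$, giving $(v+1)q+(\text{tail length})=k$. By Lemma 3.8 the tail is shorter than $C'$, so its length is less than $q$; therefore $v+1=\lfloor k/q\rfloor=f$ and the tail has length $k-fq=m$. Step (iii). Write $C'$ starting at $k-1$ as $k-1=b_1\to b_2\to\cdots\to b_q\to b_1$, and let $b_i=(n-3)_{k+1}$. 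Since none of $b_2,\dots,b_i$ equals $k-1$ and none of $b_1,\dots,b_{i-1}$ equals $(n-3)_{k+1}$, Lemma 3.1 turns the arc $b_1\to\cdots\to b_i$ into a path from $k=(b_1+1)_{k+1}$ to $(n-2)_{k+1}=(b_i+1)_{k+1}$ in $G(n,k)$; since $k$ is the only source and $(n-2)_{k+1}$ the only sink of $G(n,k)$, this path is the tail, and by definition it equals $(C'+1)^{*}$. The first description of $G(n,k)$ follows. The second is obtained by the mirror argument: replace the pair $(k-1,(n-3)_{k+1})$ by $(0,(n-1)_{k+1})$, use Lemma 3.3 in place of Lemma 3.1, and note that $C''-1$ is now the non-cycle anchoring the other end of the chain; the vertex count is unchanged.

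Step (iv). The tail of $G(n,k)$ is unique, and the two descriptions present it as $(C'+1)^{*}$ and as $(C''-1)^{*}$, so $(C'+1)^{*}=(C''-1)^{*}$. From Step (i) we already have $C''=C'-v=C'-(f-1)$, that is, $C'=C''+(f-1)$; alternatively, one can argue this from the two descriptions alone, since $C''$ must be one of $C',C'-1,\dots,C'-(f-1)$, say $C''=C'-j$, while Lemma 3.5 shows $C''-1$ is not a cycle, so $j$ cannot be $\le f-2$ and must equal $f-1$. I expect the real work to be in Steps (ii)--(iii): pinning down both ends of the translation chain via Lemma 3.5, and above all verifying that $(C'+1)^{*}$ is the \emph{whole} tail rather than a proper initial segment, which is exactly where the vertex count and the strict inequality ``the tail is shorter than $C'$'' from Lemma 3.8 are needed. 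Given those, both descriptions and the two final identities follow without further computation.
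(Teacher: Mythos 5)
Your proof is correct and follows essentially the same route as the paper's: use Lemma 3.8 and Theorem 3.9 to get that all cycles have length $q$ and the tail is shorter, count edges/vertices to pin down $f$ cycles and a tail of length $m$, anchor the contiguous chain of translates at $C'$ (resp.\ $C''$) by noting $C'+1$ (resp.\ $C''-1$) cannot be a cycle, and identify the tail as the translated arc from $k-1$ to $(n-3)_{k+1}$ (resp.\ from $0$ to $(n-1)_{k+1}$). Your version is somewhat more detailed — in particular the explicit appeal to Lemma 3.1/3.3 and the source/sink uniqueness to see that $(C'+1)^{*}$ is the whole tail — but the substance is the same.
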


\begin{proof}
Since all cycles have length $q$ and the tail has length $<q$ by Lemma 3.8,
there must be exactly $f$ cycles of length $q$ and a tail of length $m$.
There is no cycle $(C'+1)$, because $C'$ contains $k-1$, and 
adding $1$ to $k-1$ yields the vertex $k$ in the tail.  Thus the
set of cycles must consist of the $f$ contiguous cycles in \eqref{eq: 4.1}.
Similarly, since $C''$ contains $0$, there is no cycle $(C'' -1)$, 
so the set of cycles must
consist of the $f$ contiguous cycles in \eqref{eq: 4.2}.  By Lemma 3.8,
the path
\[
k-1 \rightarrow \dots \rightarrow (n-3)_{k+1}
\]
is part of the cycle $C'$, so the tail must be $(C' +1)^*$.
Similarly, the path
\[
0 \rightarrow \dots \rightarrow (n-1)_{k+1}
\]
is part of the cycle $C''$,
so the tail must be $(C'' -1)^*$.
\end{proof}

Define translates of paths exactly as we did for cycles.
To reiterate, suppose $D$ is a path in $G(n,k)$ of the form
$d_1 \rightarrow \dots \rightarrow d_\ell$.
If for some integer $t$, there is a path
$(d_1+t)_{k+1} \rightarrow \dots \rightarrow (d_\ell+t)_{k+1}$
in $G(n,k)$, we call this path the translate $D+t$.

\begin{lem}
Suppose that $k-1$ is in the tail of $G(n,k)$ but 
$(n-3)_{k+1}$ is not.  Thus, for some $\ell \ge 2$, the tail
must have an initial segment of length $\ell -1$ given by
\begin{equation}\label{eq:4.3}
P \rightarrow k-1,
\end{equation}
where $P$ is an open path of length $\ell -2$ of the form
\[
k \rightarrow a_2 \rightarrow \dots \rightarrow a_{\ell -1}.
\]
Let $C$ denote the cycle of length $\ell -1$ given by
\[
0 \rightarrow (a_2+1)_{k+1}  \rightarrow \dots \rightarrow 
(a_{\ell -1}+1)_{k+1} \rightarrow 0,
\]
which exists by Lemma 3.2.   Then $G(n,k)$ consists of the set of cycles
\begin{equation}\label{eq:4.4}
C, C+1, \dots , C+u,
\end{equation}
with $u$ as defined in Theorem 3.6,
together with a tail of the form
\begin{equation}\label{eq:4.5}
P \rightarrow (P-1) \rightarrow \dots \rightarrow (P -t) \rightarrow (P-t-1)^*,
\end{equation}
where $t \ge 0$ is minimal for which $P-t$ contains $(n-1)_{k+1}$,
and $(P-t-1)^*$ denotes the initial segment of $(P-t-1)$ which terminates
with the vertex $(n-2)_{k+1}$.
\end{lem}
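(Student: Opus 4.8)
The plan is to prove the two assertions — the list of cycles and the shape of the tail — separately, after a short common setup. \emph{Setup:} since $(n-3)_{k+1}$ is not in the tail, part (B) of Lemma 3.7 fails, so $G(n,k)$ has a cycle, and then part (D) fails, so (as $k-1$ is in the tail) $0$ is \emph{not} in the tail and therefore lies in a cycle. Applying Lemma 3.2 to the initial tail segment \eqref{eq:4.3} produces the cycle $C$ in the statement; I would immediately record the vertex-set identity $V(C)=\{(v+1)_{k+1}:v\in V(P)\}$, so that $0\in V(C)$ and $V(P)$ is the $(-1)$-translate of $V(C)$. Everything will be anchored to this $C$.

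\emph{The cycles.} Because $0\in V(C)$, Lemma 3.5 shows $C-1$ is not a cycle. Lemma 3.6 then gives that $C,C+1,\dots,C+u$ are all cycles, and since $C+u$ meets $k-1$ or $(n-3)_{k+1}$, Lemma 3.5 shows $C+(u+1)$ is not a cycle. By Theorem 3.9 the set of cycles of $G(n,k)$ is a block of contiguous translates of $C$ that contains $C$; since neither $C-1$ nor $C+(u+1)$ is a cycle, that block is exactly $C,C+1,\dots,C+u$. (Since $k-1$ lies in the tail it is in no cycle, so in fact $C+u$ contains $(n-3)_{k+1}$; this is not needed here but matches Lemma 3.8.)

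\emph{The tail.} The heart of the argument is an induction showing the tail begins $P\to(P-1)\to(P-2)\to\cdots$. Put $Q_1:=P\to k-1$, and, whenever it is a path, let $Q_m$ denote the translate $(P-(m-1))\to(k-m)_{k+1}$; the last vertex of $Q_m$ is the first vertex of $Q_{m+1}$, so the partial concatenations $R_m:=Q_1\cdot Q_2\cdots Q_m$ make sense. I would show by induction on $m$ $(1\le m\le t+1)$ that $R_m$ is an initial segment of the tail, the base case being the hypothesis \eqref{eq:4.3}. For the step, Lemma 3.3 applied to the path $Q_m$ shows $Q_{m+1}=Q_m-1$ is a path, because $0\notin V(Q_m)\subseteq(\text{tail})$ and because $(n-1)_{k+1}\notin V(P-(m-1))$ for $m-1<t$ by minimality of $t$. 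Then following out-edges forces the tail to run through all of $Q_{m+1}$: the last vertex of $R_m$ has out-degree $1$ (else it would be $(n-2)_{k+1}$, forcing $(n-1)_{k+1}\in V(P-(m-1))$), and $(n-2)_{k+1}$ is not an interior vertex of $Q_{m+1}$, thanks to the residue identity
\[
(n-2)_{k+1}\in V(P-j)\iff (n-1)_{k+1}\in V(P-(j-1)),
\]
which with the minimality of $t$ yields $(n-2)_{k+1}\notin V(P-j)$ for every $j\le t$.

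\emph{The endgame, and the main obstacle.} Once $P,P-1,\dots,P-t$ are shown to be genuine full translates forming an initial stretch of the tail, I would follow out-edges one last time into $P-t-1$ and invoke Lemma 3.3 (now applied to $P-t$, using that $(n-1)_{k+1}$ does sit in $V(P-t)$) to see that $P-t-1$ is a path precisely up to the vertex that becomes $(n-2)_{k+1}$; this forces the tail to terminate with the initial segment $(P-t-1)^{\ast}$, exactly as claimed. I expect the real work to be in organizing the boundary cases without fuss: $t=0$ (the induction is vacuous and the endgame must be started at $Q_1$ itself), $(n-1)_{k+1}=k$ (so $(P-t-1)^{\ast}$ degenerates to the single vertex $(n-2)_{k+1}$), and $(n-1)_{k+1}$ occupying the last position of $V(P-t)$ (so $(P-t-1)^{\ast}$ is actually a full translate). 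Carrying the $\bmod\,(k+1)$ index arithmetic cleanly through the concatenations $R_m$ is where the exposition will need the most care.
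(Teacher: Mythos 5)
Your proposal is correct and follows essentially the same route as the paper's proof: the cycle set is pinned down by noting $0\in C$ kills $C-1$ and invoking Lemma 3.6 with Theorem 3.9, and the tail is obtained by repeatedly translating the initial segment \eqref{eq:4.3} downward via Lemma 3.3 until $(n-1)_{k+1}$ is hit, whereupon the next translate truncates at $(n-2)_{k+1}$. The paper compresses the tail induction and its boundary cases into a single sentence, so your version is simply a more detailed rendering of the same argument.
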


\begin{proof}
Since $C$ contains $0$, there is no cycle $C-1$, so by Theorem 3.9,
the contiguous cycles
in \eqref{eq:4.4} must be the full set of cycles in $G(n,k)$.
By \eqref{eq:4.3} and Lemma 3.3, the tail begins
\[
P \rightarrow (P-1) \rightarrow \dots \rightarrow (P -t).
\]
Finally, since  $(P-t)$  contains $(n-1)_{k+1}$ and 
$((n-1)_{k+1} -1)_{k+1}$ equals the terminal vertex $(n-2)_{k+1}$, 
we have $(P -t) \rightarrow (P-t-1)^*$.
\end{proof}

\begin{lem}
Suppose that $0$ is in the tail of $G(n,k)$ but
$(n-1)_{k+1}$ is not.  Thus, for some $\ell \ge 2$, the tail
must have an initial segment of length $\ell -1$ given by
\begin{equation}\label{eq:4.6}
P \rightarrow 0,
\end{equation}
where $P$ is an open path of length $\ell -2$ of the form
\[
k \rightarrow a_2 \rightarrow \dots \rightarrow a_{\ell -1}.
\]
Let $C$ denote the cycle of length $\ell -1$ given by
\[
k-1 \rightarrow (a_2-1)_{k+1}  \rightarrow \dots \rightarrow 
(a_{\ell -1}-1)_{k+1} \rightarrow k-1,
\]
which exists by Lemma 3.4.   Then $G(n,k)$ consists of the set of cycles
\begin{equation}\label{eq:4.7}
C, C-1, \dots , C-v,
\end{equation}
with $v$ as defined in Theorem 3.6,
together with a tail of the form
\begin{equation}\label{eq:4.8}
P \rightarrow (P+1) \rightarrow \dots \rightarrow (P +t) \rightarrow (P+t+1)^*,
\end{equation}
where $t \ge 0$ is minimal for which $P+t$ contains $(n-3)_{k+1}$,
and $(P+t+1)^*$ denotes the initial segment of $(P+t+1)$ which terminates
with the vertex $(n-2)_{k+1}$.
\end{lem}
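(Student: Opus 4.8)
The plan is to follow the proof of the preceding lemma almost verbatim, interchanging $+1$ with $-1$ and interchanging the obstruction pairs $\{k-1,(n-3)_{k+1}\}$ and $\{0,(n-1)_{k+1}\}$. First I would identify the cycles. The cycle $C$ has $k-1$ as its initial vertex, so $k-1\in C$; hence $k-1$ does not lie in the tail (recall that $G(n,k)$ is the tail together with a disjoint union of cycles), and by Lemma 3.5 there is no cycle $C+1$. Therefore, by Theorem 3.9 and Lemma 3.6, the cycles of $G(n,k)$ are exactly $C,C-1,\dots,C-v$, which is \eqref{eq:4.7}.

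Next I would unwind the tail. By \eqref{eq:4.6} it begins $P\to 0$. Since $k-1$ is absent from the tail, no vertex of $P$ — nor of any translate of $P$ through which the tail runs — equals $k-1$, so in every appeal to Lemma 3.1 below the hypothesis ``endpoint $\ne k-1$'' holds automatically, leaving the vertex $(n-3)_{k+1}$ as the only obstruction to a $+1$-translation. Translating the path $P\to 0$ by $+1$ (Lemma 3.1) produces a path again precisely when $P$ avoids $(n-3)_{k+1}$, i.e.\ when $t\ge 1$; its first vertex is $(k+1)_{k+1}=0$, exactly where the tail currently stands. Since every vertex other than $(n-2)_{k+1}$ has out-degree $1$, the tail is forced to follow this translate, and iterating we read the tail off as $P\to(P+1)\to(P+2)\to\cdots$, the edge joining $P+j$ to $P+(j+1)$ being the translate by $j$ of the tail edge $a_{\ell-1}\to 0$. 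The iteration must terminate, since the first vertices $k,0,1,2,\dots$ of the successive translates run through all residues modulo $k+1$, forcing some $P+j$ to contain $(n-3)_{k+1}$; let $t$ be the least such $j$. For $j\le t$ the translate $P+j$ is traversed in full, which gives the first part of \eqref{eq:4.8}. Finally, if $(n-3)_{k+1}$ is the $m$-th vertex of $P+t$, then $(n-2)_{k+1}=((n-3)_{k+1}+1)_{k+1}$ is the $m$-th vertex of $P+t+1$, while the first $m-1$ edges of $P+t$ still translate by $+1$ (their initial vertices are not $(n-3)_{k+1}$); hence the tail advances to that $m$-th vertex and then stops, $(n-2)_{k+1}$ having out-degree $0$. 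This yields the segment $(P+t+1)^*$ and completes \eqref{eq:4.8}.

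The index bookkeeping modulo $k$ and $k+1$ is routine; I expect the only delicate step to be the verification that the tail is \emph{forced} to follow each successive translate of $P$, as opposed to these translates merely being paths in $G(n,k)$. This should come from the degree count of Section 2 — every vertex except $k$ and $(n-2)_{k+1}$ has in-degree and out-degree $1$, so the tail is the unique maximal directed path issuing from $k$ — together with the fact, which I would check along the way, that $(n-2)_{k+1}$ is met only at the very last step. I would handle the boundary cases $t=0$ (where $P$ itself already contains $(n-3)_{k+1}$, possibly at its first vertex $k$, which occurs exactly when $0=(n-2)_{k+1}$) and $k=1$ (for which the hypotheses are vacuous) via the $(P+t+1)^*$ convention, just as in the preceding lemma.
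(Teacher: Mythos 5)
Your proposal is correct and follows essentially the same route as the paper: it identifies the cycles as $C, C-1, \dots, C-v$ by noting that $C$ contains $k-1$ so $C+1$ cannot be a cycle (then invoking Theorem 3.9), and it unwinds the tail by iterated $+1$-translations via Lemma 3.1 until $(n-3)_{k+1}$ is hit, terminating with the segment $(P+t+1)^*$. The extra care you take over why the tail is \emph{forced} to follow each translate (out-degree one at every non-terminal vertex) and why the iteration terminates is left implicit in the paper but is a legitimate and welcome elaboration, not a divergence.
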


\begin{proof}
Since $C$ contains $k-1$, there is no cycle $C+1$, so by Theorem 3.9,
the contiguous cycles
in \eqref{eq:4.7} must be the full set of cycles in $G(n,k)$.
By \eqref{eq:4.6} and Lemma 3.1, the tail begins
\[
P \rightarrow (P+1) \rightarrow \dots \rightarrow (P +t).
\]
Finally, since  $(P+t)$  contains $(n-3)_{k+1}$ and
$((n-3)_{k+1} +1)_{k+1}$ equals the terminal vertex $(n-2)_{k+1}$,
we have $(P +t) \rightarrow (P+t+1)^*$.
\end{proof}

\begin{lem}
For the graph $G(n,k)$, the following are equivalent:

(A) $ k-1$ is in the tail.

(B)  $n_k$ is in the tail.

(C)  $(n-1)_{k+1}$ is in the tail.
\end{lem}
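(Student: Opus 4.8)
\emph{Proof proposal.} The plan is to prove the two equivalences (B) $\Leftrightarrow$ (C) and (A) $\Leftrightarrow$ (C) separately; chaining them gives the three-way equivalence. Throughout I use that $G(n,k)$ is the disjoint union of its tail (a path from $k$ to $(n-2)_{k+1}$) and some simple cycles, so ``lies in the tail'' is synonymous with ``lies in no cycle''.

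For (B) $\Leftrightarrow$ (C): the edge $E(1)$ in \eqref{eq:2.2} is precisely $(n-1)_{k+1}\rightarrow n_k$. First I would check, directly from \eqref{eq:2.2}, that $E(1)$ is the \emph{only} edge out of $(n-1)_{k+1}$ (a source equal to $(n-1)_{k+1}$ forces $i\equiv 1 \pmod{k+1}$, hence $i=1$) and the \emph{only} edge into $n_k$ (a target equal to $n_k$ forces $i\equiv 1\pmod{k}$, hence $i=1$). Since $(n-1)_{k+1}\neq(n-2)_{k+1}$ and $n_k<k$, the vertex $(n-1)_{k+1}$ is never the terminal vertex of the tail and $n_k$ is never its initial vertex $k$. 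Consequently, if $(n-1)_{k+1}$ lies in the tail it is not the terminal vertex, so the tail continues past it along its unique outgoing edge and $n_k$ lies in the tail; and if $n_k$ lies in the tail it is not the initial vertex, so the tail reaches it along its unique incoming edge and $(n-1)_{k+1}$ lies in the tail.

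For (A) $\Leftrightarrow$ (C): if $G(n,k)$ has no cycle then every vertex lies in the tail and all of (A), (B), (C) hold, so assume at least one cycle exists. For (C) $\Rightarrow$ (A) I argue the contrapositive: suppose $k-1$ lies in a cycle. If $0$ also lies in a cycle, then neither $0$ nor $k-1$ is in the tail, so Lemma 3.8 produces a cycle containing both $0$ and $(n-1)_{k+1}$, whence $(n-1)_{k+1}$ is not in the tail. If instead $0$ is in the tail, then since a cycle exists, the equivalence in Lemma 3.7 of ``no cycles'' with ``$0$ and $(n-1)_{k+1}$ in the tail'' forces $(n-1)_{k+1}$ out of the tail. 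For (A) $\Rightarrow$ (C), suppose $k-1$ is in the tail; since a cycle exists, the equivalence in Lemma 3.7 of ``no cycles'' with ``$k-1$ and $(n-3)_{k+1}$ in the tail'' shows $(n-3)_{k+1}$ is not in the tail, so the hypotheses of Lemma 3.7 for applying Lemma 4.3 are met. Lemma 4.3 then exhibits the tail as $P \rightarrow (P-1) \rightarrow \dots \rightarrow (P-t) \rightarrow (P-t-1)^*$ with $(n-1)_{k+1}\in P-t$, so $(n-1)_{k+1}$ lies in the tail.

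The routine part of this is the degree bookkeeping that pins down $E(1)$ as the unique incoming/outgoing edge at $n_k$ and $(n-1)_{k+1}$. The one step that genuinely needs the structural work of the preceding sections, rather than a one-line argument, is the implication (A) $\Rightarrow$ (C) in the sub-case where $0$ happens to lie in a cycle: the cycle-counting characterizations of Lemma 3.7 alone do not suffice, and one must invoke the explicit shape of the tail given by Lemma 4.3. Accordingly, the point I would take most care over is verifying that Lemma 4.3's hypotheses really are forced in that sub-case, i.e. that the existence of a cycle together with ``$k-1$ in the tail'' pushes $(n-3)_{k+1}$ out of the tail.
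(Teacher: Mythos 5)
Your proof is correct and follows essentially the same route as the paper: (B)$\Leftrightarrow$(C) via the edge $E(1)$, then (A)$\Rightarrow$(C) via Lemma 3.7 forcing $(n-3)_{k+1}$ out of the tail so that Lemma 4.3 applies, and the converse via Lemma 3.8. The only (harmless) variation is in the sub-case where $k-1$ lies in a cycle but $0$ lies in the tail: you invoke the equivalence (A)$\Leftrightarrow$(C) of Lemma 3.7 directly, whereas the paper cites Lemma 3.6 to place $(n-1)_{k+1}$ in the cycle $C-v$; both are valid.
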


\begin{proof}
By \eqref{eq:2.2} with $i=1$,  (B) and (C) are equivalent,
so it remains to prove that (A) is equivalent to (C).
The result is obvious if there are no cycles, so assume
there is a cycle $C$.

First assume (A).  By Lemma 3.7,  $(n-3)_{k+1}$ cannot lie in the tail,
so that the hypotheses of Lemma 4.3 hold.  Now (C) follows, 
since the segment $P-t$
in \eqref{eq:4.5} contains $(n-1)_{k+1}$. 

Conversely, assume that (A) is false.   We must show that
$(n-1)_{k+1}$ lies in a cycle.  If $0$ is not in the tail,
then $(n-1)_{k+1}$ lies in the cycle $C''$, by Lemma 3.8.
If $0$ lies in the tail, then $(n-1)_{k+1}$ lies
in the cycle $C-v$, where $v$ is defined in Lemma 3.6.
\end{proof}

\section{Relation between $G(n,k)$ and $G(n+1,k)$}

Recall from \eqref{eq:2.2} that $G(n,k)$ has the $k$ edges
\[
E(i,n):=
(i + n -2)_{k+1} \rightarrow  (i + n - 1)_{k}, \quad 1 \le i \le k.
\]
The following lemma relates these to the edges of $G(n+1,k)$.

\begin{lem}
The $k$ edges of $G(n+1,k)$ consist of the $k-1$ edges
\[
E(2,n), E(3,n), \dots, E(k,n)
\]
from $G(n,k)$ together with the additional edge $E(k, n+1)$.
Thus the only edge in $G(n,k)$ which is not also an edge in
$G(n+1,k)$ is 
\[
E(1,n):=
   (n-1)_{k+1} \rightarrow n_k;
\]
this edge is ``replaced" in $G(n+1,k)$ by
\[
E(k,n+1):=
   (n-2)_{k+1} \rightarrow n_k.
\]
\end{lem}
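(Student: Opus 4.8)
The plan is to prove the lemma by a direct residue computation, using nothing beyond the explicit list of edges in \eqref{eq:2.2}. First I would write out the $k$ edges of $G(n+1,k)$ by applying \eqref{eq:2.2} with $n$ replaced by $n+1$, namely
\[
E(i,n+1) = (i+n-1)_{k+1} \rightarrow (i+n)_k, \quad 1 \le i \le k,
\]
and then compare this with the generic edge $E(j,n) = (j+n-2)_{k+1} \rightarrow (j+n-1)_k$ of $G(n,k)$. Substituting $j = i+1$ gives $E(i,n+1) = E(i+1,n)$ for every $i$ with $1 \le i \le k-1$, which shows at once that the $k-1$ edges $E(2,n), E(3,n), \dots, E(k,n)$ of $G(n,k)$ all occur in $G(n+1,k)$.

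Next I would treat the single remaining edge $E(k,n+1) = (k+n-1)_{k+1} \rightarrow (k+n)_k$. Reducing the two subscripts with the congruences $k+n-1 \equiv n-2 \pmod{k+1}$ and $k+n \equiv n \pmod{k}$ yields $E(k,n+1) = (n-2)_{k+1} \rightarrow n_k$, which is the claimed ``replacement'' edge. Since $G(n+1,k)$ has exactly $k$ edges, and $k-1$ of them have just been matched with $E(2,n),\dots,E(k,n)$, the edge $E(k,n+1)$ accounts for the last one.

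Finally, to identify the edge of $G(n,k)$ that is no longer present, I read off from \eqref{eq:2.2} that $E(1,n) = (n-1)_{k+1} \rightarrow n_k$, the only member of the list $E(1,n),\dots,E(k,n)$ not already among $E(2,n),\dots,E(k,n)$. A quick check confirms that $E(1,n)$ really is absent from $G(n+1,k)$ and does not secretly coincide with $E(k,n+1)$: both edges have head $n_k$ but different tails, since $(n-1)_{k+1} \neq (n-2)_{k+1}$; likewise $E(1,n) \neq E(j,n)$ for $2 \le j \le k$, since that would force $j \equiv 1 \pmod{k+1}$. There is no genuine obstacle here; the only thing to be careful about is the bookkeeping with the two different moduli and ensuring the index shift $i \mapsto i+1$ stays inside the admissible range $1 \le i \le k$, which is precisely why exactly one ``wrap-around'' edge $E(k,n+1)$ emerges as new.
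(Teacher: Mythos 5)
Your proof is correct and rests on exactly the same observation as the paper's, namely the index-shift identity $E(i+1,n)=E(i,n+1)$ for $1\le i\le k-1$; you simply carry out the remaining bookkeeping (computing $E(k,n+1)$ and checking $E(1,n)$ genuinely disappears) more explicitly than the paper does.
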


\begin{proof}
The result follows because 
\[
E(i+1,n) = E(i, n+1), \quad  1 \le i \le k-1.
\]
\end{proof}

\begin{lem}
If $C$ is a cycle in $G(n,k)$ which does not contain $n_k$,
then $C$ remains a cycle in $G(n+1,k)$.
\end{lem}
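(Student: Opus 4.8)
Proof proposal for Lemma 5.3 (the final statement: "If $C$ is a cycle in $G(n,k)$ which does not contain $n_k$, then $C$ remains a cycle in $G(n+1,k)$.")

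The plan is to use Lemma 5.1, which tells us precisely how the edge set changes when $n$ is incremented: the only edge of $G(n,k)$ that is lost is $E(1,n) = (n-1)_{k+1} \rightarrow n_k$, and it is replaced by $E(k,n+1) = (n-2)_{k+1} \rightarrow n_k$. So to show that the cycle $C$ survives into $G(n+1,k)$, it suffices to check that none of the edges of $C$ is the lost edge $E(1,n)$, and then to observe that all of $C$'s edges are still present in $G(n+1,k)$, whence $C$ is again a closed directed walk, i.e. a cycle.

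First I would write $C$ as $a_1 \rightarrow a_2 \rightarrow \dots \rightarrow a_\ell \rightarrow a_1$. The key observation is that the lost edge $E(1,n)$ is the unique edge of $G(n,k)$ whose \emph{terminal} vertex is $n_k$ — indeed, by \eqref{eq:2.2}, the terminal vertex of $E(i,n)$ is $(i+n-1)_k$, and this equals $n_k$ exactly when $i \equiv 1 \pmod k$, i.e. (since $1 \le i \le k$) when $i = 1$. Since every non-tail vertex has in-degree $1$ in $G(n,k)$, the edge $E(1,n)$, if it lies on any cycle at all, must be the only edge of that cycle entering $n_k$. Now if $C$ does not contain the vertex $n_k$, then in particular no edge of $C$ has terminal vertex $n_k$, so $E(1,n)$ is not among the edges of $C$.

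Therefore every edge of $C$ lies in the common edge set $\{E(2,n),\dots,E(k,n)\}$, which by Lemma 5.1 is a subset of the edges of $G(n+1,k)$. Hence the same vertex sequence $a_1 \rightarrow \dots \rightarrow a_\ell \rightarrow a_1$ is a closed walk along edges of $G(n+1,k)$; since the $a_i$ were already distinct (as $C$ is a simple cycle), this closed walk is a simple cycle of $G(n+1,k)$, and it is literally the same cycle $C$. I do not anticipate a serious obstacle here; the only point requiring a moment's care is the identification of $E(1,n)$ as the unique edge into $n_k$, which is a direct consequence of \eqref{eq:2.2} and the in-degree count established in Section 2, so I would state that cleanly and let the rest follow.
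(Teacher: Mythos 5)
Your proposal is correct and follows the same route as the paper: invoke Lemma 5.1 to note that the only edge lost in passing from $G(n,k)$ to $G(n+1,k)$ is $E(1,n)$, whose terminal vertex is $n_k$, so a cycle avoiding $n_k$ keeps all its edges. You simply spell out the identification of $E(1,n)$ as the unique edge into $n_k$, which the paper leaves implicit.
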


\begin{proof}
By Lemma 5.1, all the edges in $C$ remain edges in $G(n+1,k)$,
since $E(1,n)$ is not an edge in $C$.
\end{proof}

\begin{lem}
Suppose that $G(n,k)$ has a cycle $C$ given by
\begin{equation}\label{eq:5.1}
a_1 \rightarrow a_2 \rightarrow \dots \rightarrow a_\ell \rightarrow a_1
\end{equation}
with $a_1 = n_k$.  
Then the tail in $G(n+1,k)$ is
\begin{equation}\label{eq:5.2}
T \rightarrow a_1 \rightarrow \dots \rightarrow a_\ell,
\end{equation}
where $T$ is the tail
\begin{equation}\label{eq:5.3}
T:= k \rightarrow \dots \rightarrow (n-2)_{k+1}
\end{equation}
in $G(n,k)$.
\end{lem}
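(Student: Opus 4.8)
The plan is to use Lemma 5.1 to track precisely how the edge set changes when $n$ is replaced by $n+1$, and to combine this with the fact (from Section 2) that in $G(n,k)$ the cycle $C$ and the tail $T$ are vertex-disjoint.

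First I would pin down $a_\ell$. Since $a_1=n_k\in\{0,1,\dots,k-1\}$, we have $a_1\neq k$, so $a_1$ has in-degree $1$ in $G(n,k)$. Its unique incoming edge is the edge $E(i,n)$ with $(i+n-1)_k=n_k$, which forces $i=1$; that is, the incoming edge of $n_k$ is $E(1,n)=(n-1)_{k+1}\rightarrow n_k$. The last edge $a_\ell\rightarrow a_1$ of the cycle $C$ must therefore equal this edge, so $a_\ell=(n-1)_{k+1}$; since $(n-1)_{k+1}=\bigl((n+1)-2\bigr)_{k+1}$, this is exactly the out-degree-$0$ vertex of $G(n+1,k)$, while $k$ remains the in-degree-$0$ vertex.

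Next I would push everything into $G(n+1,k)$. By Lemma 5.1 the only edge of $G(n,k)$ absent from $G(n+1,k)$ is $E(1,n)=a_\ell\rightarrow a_1$, and the only new edge is $E(k,n+1)=(n-2)_{k+1}\rightarrow n_k=(n-2)_{k+1}\rightarrow a_1$. The tail $T=k\rightarrow\cdots\rightarrow(n-2)_{k+1}$ of $G(n,k)$ is vertex-disjoint from $C$, hence does not use the edge $E(1,n)$, so all of its edges persist in $G(n+1,k)$; likewise the edges $a_1\rightarrow a_2\rightarrow\cdots\rightarrow a_\ell$ of $C$ persist. Splicing $T$, the new edge $(n-2)_{k+1}\rightarrow a_1$, and the broken cycle together yields the walk
\[
k\rightarrow\cdots\rightarrow(n-2)_{k+1}\rightarrow a_1\rightarrow\cdots\rightarrow a_\ell
\]
in $G(n+1,k)$, and since $T$ is a simple path, $C$ a simple cycle, and $T\cap C=\emptyset$, this walk is in fact a simple path, namely $T\rightarrow a_1\rightarrow\cdots\rightarrow a_\ell$ as in \eqref{eq:5.2}.

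Finally I would identify this path with the tail of $G(n+1,k)$: it runs from the in-degree-$0$ vertex $k$ to the out-degree-$0$ vertex $a_\ell=(n-1)_{k+1}$, and by the structural description in Section 2, $G(n+1,k)$ is the vertex-disjoint union of its tail (the open path from $k$ to $(n-1)_{k+1}$) with a union of simple cycles, none of which can contain $k$ or $(n-1)_{k+1}$; hence the only path joining these two vertices is the tail, so the path above is the tail of $G(n+1,k)$. The one delicate point is the bookkeeping in the second paragraph — recognizing that $a_\ell\rightarrow a_1$ is forced to be $E(1,n)$ and that its source $(n-1)_{k+1}$ is precisely the terminal vertex of $G(n+1,k)$; everything after that is a routine application of Lemma 5.1 together with the disjointness of the tail from the cycles.
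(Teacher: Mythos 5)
Your proposal is correct and follows essentially the same route as the paper's proof: identify the last edge of $C$ as $E(1,n)$ so that $a_\ell=(n-1)_{k+1}$ is the terminal vertex of $G(n+1,k)$, use the new edge $E(k,n+1)$ to attach $T$ to $a_1$, and invoke Lemma 5.1 to see that all other edges persist. Your version merely spells out a few points the paper leaves implicit (the disjointness of $T$ and $C$, and the uniqueness of the path from $k$ to the terminal vertex).
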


\begin{proof}
The last edge in \eqref{eq:5.1} is $E(1,n)$, since
$a_1=n_k$.  Thus $a_\ell$ is the terminal vertex $(n-1)_{k+1}$ 
in $G(n+1,k)$.
Since  $E(k,n+1)$ is an edge in $G(n+1,k)$,
we see that $T \rightarrow a_1$ in $G(n+1,k)$.
Finally, the edges
\[
a_i \rightarrow a_{i+1}, \quad 1 \le i \le \ell-1
\]
in $G(n,k)$ all lie in $G(n+1,k)$ by Lemma 5.1.
\end{proof}

We say that $G(n,k)$'s tail $T$ in \eqref{eq:5.3} {\em will absorb}
$G(n,k)$'s cycle $C$ in \eqref{eq:5.1}
if \eqref{eq:5.2} gives the tail in $G(n+1,k)$.

\begin{lem}
If $k-1$ is not in $G(n,k)$'s tail $T$, then $T$ will absorb a cycle.
\end{lem}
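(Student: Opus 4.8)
The plan is to exhibit a cycle of $G(n,k)$ through the vertex $n_k$ and then quote Lemma 5.3, whose conclusion is precisely the statement that the tail $T$ absorbs that cycle.

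First I would invoke Lemma 4.5: since its conditions (A) and (B) are equivalent, the hypothesis that $k-1$ is not in the tail of $G(n,k)$ gives at once that $n_k$ is not in the tail either. On the other hand, by the description of $G(n,k)$ in Section 2, the graph consists of the tail (from $k$ to $(n-2)_{k+1}$) together with a disjoint union of simple cycles, and these account for all of the vertices $0,1,\dots,k$: indeed every vertex other than $k$ and $(n-2)_{k+1}$ has in-degree $1$ and out-degree $1$, so following its out-edges either returns to it (yielding a cycle through it) or runs down the tail to the out-degree-zero vertex $(n-2)_{k+1}$. Since $n_k$ is a vertex of $G(n,k)$ that does not lie on the tail, it must therefore lie on one of the cycles; write this cycle $C$ as
\[
a_1 \rightarrow a_2 \rightarrow \dots \rightarrow a_\ell \rightarrow a_1, \qquad a_1 = n_k .
\]

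With such a cycle in hand, Lemma 5.3 applies verbatim and yields that the tail of $G(n+1,k)$ equals $T \rightarrow a_1 \rightarrow \dots \rightarrow a_\ell$, where $T$ is the tail of $G(n,k)$. By the definition of absorption given just before the lemma, this is exactly the assertion that $T$ will absorb $C$, which proves the lemma.

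There is essentially no obstacle: the whole content lies in producing a cycle through $n_k$, and that is forced immediately by Lemma 4.5 (which puts $n_k$ off the tail) together with the remark that every vertex of $G(n,k)$ lies on the tail or on a cycle; Lemma 5.3 then does all the remaining work.
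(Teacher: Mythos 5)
Your proof is correct and follows the same route as the paper, which simply cites Lemmas 5.3 and 4.5; you have merely filled in the (easy) intermediate step that a vertex of $G(n,k)$ not on the tail must lie on a cycle. Nothing to change.
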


\begin{proof}
The result follows from Lemma 5.3 and Lemma 4.5.
\end{proof}

\begin{lem}
Suppose that $n_k$ lies in the tail $T$ of $G(n,k)$, so that
$T$ must have the form
\begin{equation}\label{eq:5.4}
T:= T_1 \rightarrow n_k \rightarrow \dots \rightarrow (n-2)_{k+1},
\end{equation}
where $T_1$ is the tail in $G(n+1,k)$ given by
\begin{equation}\label{eq:5.5}
T_1:=  k \rightarrow \dots \rightarrow (n-1)_{k+1}.
\end{equation}
Then the cycles in $G(n,k)$ remain cycles in $G(n+1,k)$,
and $G(n+1,k)$ has one additional cycle $C$ given by
\begin{equation}\label{eq:5.6}
C:= n_k \rightarrow \dots \rightarrow (n-2)_{k+1} \rightarrow n_k.
\end{equation}
\end{lem}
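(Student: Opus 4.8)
The plan is to track precisely which edges change when passing from $G(n,k)$ to $G(n+1,k)$, using Lemma 5.1, and then re-read off the path-and-cycles decomposition of $G(n+1,k)$. First I would justify the asserted shape \eqref{eq:5.4} of the tail. Since $n_k\in\{0,1,\dots,k-1\}$ we have $n_k\neq k$, so $n_k$ has in-degree $1$ in $G(n,k)$, and by \eqref{eq:2.2} with $i=1$ its unique incoming edge is $E(1,n)=(n-1)_{k+1}\rightarrow n_k$. Hence if $n_k$ lies on the tail $T$, then $(n-1)_{k+1}$ is its immediate predecessor there, and $T$ splits as $T_1\rightarrow n_k\rightarrow\dots\rightarrow(n-2)_{k+1}$ with $T_1:\ k\rightarrow\dots\rightarrow(n-1)_{k+1}$. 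Write $S$ for the segment $n_k\rightarrow\dots\rightarrow(n-2)_{k+1}$ of $T$; then, as \emph{edge sets}, the edges of $T$ are exactly the edges of $T_1$, the single edge $E(1,n)$, and the edges of $S$.

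The first assertion is then immediate: the tail and the cycles of $G(n,k)$ are vertex-disjoint, so no cycle of $G(n,k)$ contains $n_k$, and Lemma 5.2 makes each of them a cycle of $G(n+1,k)$. For the remaining claims I would use Lemma 5.1 to describe the whole edge set of $G(n+1,k)$: it is obtained from that of $G(n,k)$ by deleting $E(1,n)$ and inserting $E(k,n+1)=(n-2)_{k+1}\rightarrow n_k$, all other edges being common to both graphs. Combining this with the edge-set description of $T$ above, the edges of $G(n+1,k)$ fall into three groups: (i) the edges of $T_1$; (ii) the edges of $S$ together with $E(k,n+1)$; (iii) the edges of the cycles of $G(n,k)$. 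Group (i) forms a simple path from $k$ to $(n-1)_{k+1}$; group (ii) forms precisely the closed walk $C:\ n_k\rightarrow\dots\rightarrow(n-2)_{k+1}\rightarrow n_k$ of \eqref{eq:5.6}, which is a simple cycle since its vertices are those of the simple path $S$ (a loop when $n_k=(n-2)_{k+1}$); and group (iii) forms the cycles of $G(n,k)$. These three pieces are pairwise vertex-disjoint, because $T$ is a simple path, so $T_1$ and $S$ share no vertex, and every vertex of $C$ lies on $T$, so $C$ meets none of $G(n,k)$'s cycles.

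Finally I would invoke the fact that $G(n+1,k)$ is itself a disjoint union of one open path — its tail, running from the in-degree-$0$ vertex $k$ to the out-degree-$0$ vertex $(n+1-2)_{k+1}=(n-1)_{k+1}$ — and vertex-disjoint simple cycles. Comparing this with the decomposition just obtained, the path group (i) must be the tail $T_1$ of $G(n+1,k)$, and the cycle set must be exactly $\{C\}$ together with the cycles of $G(n,k)$, which is the assertion. The only genuinely delicate point is this last comparison, namely ruling out the appearance of any \emph{extra} cycle; I would handle it purely through the edge-set bookkeeping above, with no appeal to Theorem 3.9, and the degenerate situations ($T_1$ of length $0$, or $C$ a loop) are covered by the same argument verbatim since the statements about vertex degrees and simple paths still hold.
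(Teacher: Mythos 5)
Your proof is correct and follows essentially the same route as the paper's: both arguments rest on the edge bookkeeping of Lemma 5.1 (delete $E(1,n)$, insert $E(k,n+1)$) together with Lemma 5.2 for the surviving cycles. You are somewhat more thorough than the paper — you explicitly justify the decomposition \eqref{eq:5.4} via the in-degree of $n_k$, and you explicitly rule out extra cycles and identify $T_1$ as the new tail, points the paper leaves to the reader via the general tail-plus-cycles structure of $G(n+1,k)$.
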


\begin{proof}
Cycles in $G(n,k)$ remain cycles in $G(n+1,k)$
by Lemma 5.2.  The last edge in \eqref{eq:5.6}
is $E(k,n+1)$, and in view of \eqref{eq:5.4},
the remaining edges in \eqref{eq:5.6}
are also in $G(n+1,k)$ by Lemma 5.1, since none of these are $E(1,n)$.
\end{proof}

We say that $G(n,k)$'s tail $T$ in \eqref{eq:5.4} {\em will shed}
$G(n,k)$'s cycle $C$ in \eqref{eq:5.6}
if \eqref{eq:5.5} gives the tail in $G(n+1,k)$.

\begin{lem}
If $k-1$ lies in $G(n,k)$'s tail $T$, then $T$ will shed a cycle.
\end{lem}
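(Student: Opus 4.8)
The plan is to deduce the statement from Lemma 5.6 and Lemma 4.5, in direct parallel with the proof of Lemma 5.4. The key preliminary observation is that the phrase ``$T$ will shed a cycle'' unpacks, via the definition given just after Lemma 5.6, into two ingredients: that $n_k$ lies in the tail $T$ of $G(n,k)$ (which forces $T$ into the form \eqref{eq:5.4} and produces the candidate cycle $C$ of \eqref{eq:5.6}), and that $T_1$ from \eqref{eq:5.5} is the actual tail of $G(n+1,k)$. Lemma 5.6 supplies the second ingredient for free once the first is in hand, so the entire content of the lemma reduces to: if $k-1$ lies in the tail $T$, then so does $n_k$.

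First I would assume that $k-1$ lies in the tail of $G(n,k)$, and then invoke the equivalence (A) $\Leftrightarrow$ (B) of Lemma 4.5 to conclude that $n_k$ also lies in $T$. This step automatically handles the degenerate case in which $G(n,k)$ has no cycles at all: then every vertex, in particular $n_k$, lies in the tail, and Lemma 4.5 records exactly this. Next I would apply Lemma 5.6, whose hypothesis ``$n_k$ lies in the tail $T$'' is now verified. Lemma 5.6 then yields that $T$ has the form \eqref{eq:5.4}, that the cycles of $G(n,k)$ persist in $G(n+1,k)$ joined by exactly one new cycle $C$ as in \eqref{eq:5.6}, and---crucially---that $T_1 = k \rightarrow \dots \rightarrow (n-1)_{k+1}$ from \eqref{eq:5.5} is the tail of $G(n+1,k)$. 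By the definition of ``shed,'' this last fact is precisely the statement that $T$ sheds the cycle $C$; hence $T$ will shed a cycle, as claimed.

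There is no real obstacle here, since all the structural work has already been carried out in Lemmas 4.5 and 5.6. The only point that demands care is the logical ordering: the definition of ``shed'' is only meaningful once $n_k$ is known to lie in the tail, so one must first place $n_k$ in the tail via Lemma 4.5 and only then read off the shedding conclusion from Lemma 5.6, rather than trying to argue about $G(n+1,k)$ directly.
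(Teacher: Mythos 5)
Your argument is correct and is exactly the paper's proof, which likewise deduces the claim by combining the equivalence (A) $\Leftrightarrow$ (B) of Lemma 4.5 (to place $n_k$ in the tail) with Lemma 5.5 (to produce the shed cycle $C$ of \eqref{eq:5.6} and identify $T_1$ as the tail of $G(n+1,k)$). The only blemish is a labeling slip: the auxiliary lemma you invoke and describe is Lemma 5.5, not ``Lemma 5.6,'' which is the statement being proved.
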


\begin{proof}
The result follows from Lemma 5.5 and Lemma 4.5.
\end{proof}

\begin{lem}[Cf. Prop. 2.2 in \cite{Price1}]
The graph $G(n+1,k)$ has either one more or one less cycle than $G(n,k)$.
\end{lem}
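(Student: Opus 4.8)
The plan is to condition on whether the vertex $k-1$ lies in the tail $T$ of $G(n,k)$. By Lemma 4.5 this is equivalent to $n_k$ lying in $T$, and the two alternatives are precisely the hypotheses of the ``absorb'' lemma (Lemma 5.4) and the ``shed'' lemma (Lemma 5.6). In both cases the tail of $G(n+1,k)$ is already pinned down explicitly by Lemmas 5.3 and 5.5; the only remaining task will be to check that the number of cycles changes by exactly one and not more.

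I would first handle the case $k-1 \notin T$. By Lemma 4.5, $n_k \notin T$; since $n_k \in \{0,\dots,k-1\}$ is different from $k$, and every vertex other than $k$ and $(n-2)_{k+1}$ has in-degree and out-degree $1$, the vertex $n_k$ must lie on some cycle $C\colon a_1 \to \cdots \to a_\ell \to a_1$ of $G(n,k)$ with $a_1 = n_k$ (so in particular $G(n,k)$ has a cycle, consistent with Lemma 3.7). Lemma 5.3 then identifies the tail of $G(n+1,k)$ as $T \to a_1 \to \cdots \to a_\ell$, i.e.\ $T$ absorbs $C$, while by Lemma 5.2 every cycle of $G(n,k)$ other than $C$ --- none of which contains $n_k$, being vertex-disjoint from $C$ --- survives as a cycle of $G(n+1,k)$. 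Since $G(n+1,k)$ is a disjoint union of its tail and its cycles, spanning all of $\{0,1,\dots,k\}$, and the tail $T \to a_1 \to \cdots \to a_\ell$ together with those surviving cycles already covers every vertex, there can be no further cycle. Hence $G(n+1,k)$ has exactly one cycle fewer than $G(n,k)$.

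The case $k-1 \in T$ is symmetric: by Lemma 4.5, $n_k \in T$, so $T$ has the shape $T_1 \to n_k \to \cdots \to (n-2)_{k+1}$, and Lemmas 5.5--5.6 show that the tail of $G(n+1,k)$ is $T_1$ whereas $G(n+1,k)$ gains the new cycle $C\colon n_k \to \cdots \to (n-2)_{k+1} \to n_k$. By Lemma 5.2 every cycle of $G(n,k)$ persists (none contains $n_k$, which lies in $T$), and the same spanning argument shows that these, together with $C$, are all the cycles of $G(n+1,k)$; so $G(n+1,k)$ has exactly one cycle more than $G(n,k)$. The only delicate point is ruling out spurious extra cycles in $G(n+1,k)$, and I expect this to follow purely from the bookkeeping above --- namely that each $G(m,k)$ is, by the structural analysis of Section 2, a single tail together with vertex-disjoint cycles exhausting all $k+1$ vertices --- rather than from anything deeper; everything else is a direct appeal to Lemmas 4.5, 5.2, 5.3, 5.5, and 5.6.
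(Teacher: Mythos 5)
Your proposal is correct and follows the paper's own route: condition on whether $k-1$ lies in the tail of $G(n,k)$, then invoke the absorb lemma (Lemmas 5.3--5.4) or the shed lemma (Lemmas 5.5--5.6) together with Lemma 4.5 and Lemma 5.2. Your extra spanning argument ruling out spurious cycles just makes explicit what the paper leaves implicit in the tail-plus-disjoint-cycles decomposition from Section 2.
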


\begin{proof}
Let $T$ denote the tail in $G(n,k)$.
If $k-1$ is not in $T$, then by Lemma 5.4, $T$ will absorb a cycle,
so $G(n+1,k)$ has one less cycle than $G(n,k)$.
If $k-1$ lies in $T$, then by Lemma 5.6, $T$ will shed a cycle,
so $G(n+1,k)$ has one more cycle than $G(n,k)$.
\end{proof}

\section{Line graph connecting points $(n, N(n,k))$}

For fixed $k$, consider the Cartesian graph with $n$ on the horizontal axis
and $N(n,k)$ on the vertical axis.  Create a line graph connecting the
points $(n, N(n,k))$.  
Since $G(n,k)$ depends only on the value of $n$ modulo $k^2+k$,
we restrict our line graph to values of $n$ between $0$ and $k^2+k$.

In this section,
we will show that the union of this line graph with
the horizontal axis has the multimodal shape of a chain of adjoining isosceles
right triangles whose hypotenuses sit on the horizontal axis. 
This shape is actually a consequence of a general result of Price et al.
\cite[Cor. 2.4]{Price1}. (One could see this by viewing $A(n,k)$ as a matrix
over a field of $p$ elements, where $p > k(k+1)$ is prime, and then 
showing that the nullity with respect to this 
underlying finite field is the same as $N(n,k)$.)
However, instead of appealing directly to \cite[Cor. 2.4]{Price1},
we characterize the shape of the line graph using properties of
$G(n,k)$, because this enables us in Section 7 to explicitly determine the
coordinates of all the local peaks.

The multimodal shape of the line graph
is illustrated in Figure 1 for $k=6$, $ 0 \le n \le 42$.
Each dot on the horizontal axis indicates a point where the nullity
is zero, and each dot at the apex of a triangle indicates a point
where the nullity has a local peak.

\begin{figure}[h]
\centering
\scalebox{1.4}
{\includegraphics{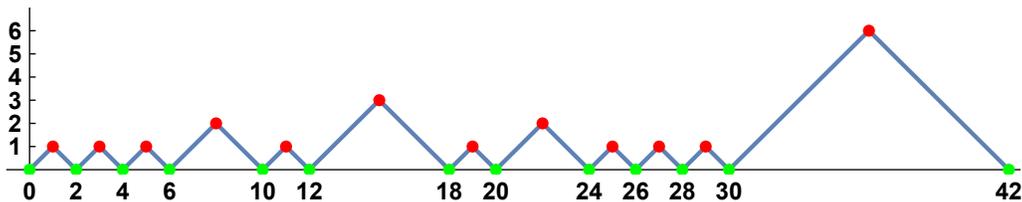}}
\caption{Line graph for k=6}
\end{figure}

The figure shows that the apex at $n=36$ has the maximum height $6$.
In general, $N(k^2,k)=k$, which is the special case $t=k, i=0$ of
Theorem 8.7.

When the line graph for $k=6$ is extended for all $n$ on the horizontal
axis, one sees the symmetry about the vertical line $n=15$.  
For general $k$, the symmetry is about $n = (k^2-k)/2$.
This can
be explained by the fact that the graph $G(n,k)$ is isomorphic to the
graph $G(k^2-k-n,k)$, via the isomorphism that fixes the vertex $k$
and takes every other vertex 
$a$ in $G(n,k)$ to the vertex $k-1-a$ in $G(k^2-k-n,k)$.

To describe the shape of the line graph for general fixed $k$, we  
start at any point $(n,0)$ on the
horizontal axis where the nullity is zero (for example, the origin). 
The corresponding graph $G(n,k)$ has no cycles and has a tail
$T_0$  of length $k$ that begins with a path
\[
B:= k \rightarrow \dots \rightarrow k-1,
\]
since $k-1$ lies in $T_0$.
Let $q$ denote the length of $B$ and write $k=fq+m$,
where $f = [k/q]$ and $0 \le m <q$.  Since $k-1$ is in $T_0$,
it follows from Lemma 5.6 that $T_0$ will shed some cycle $C$
in $G(n+1,k)$.  In $G(n+1,k)$, the translate $B+1$ is a cycle
containing $0$, which can be seen by replacing $n$ by $n+1$
in all of the edges $E(i,n)$.
Since $G(n+1,k)$ has only one cycle, we see that $C=B+1$,
so that $C$ is a cycle of length $q$ that contains $0$.
The tail $T_1$ of $G(n+1,k)$ has length $k-q$.  As we continue
incrementing the first argument in $G$, 
we now argue that the process of shedding can continue,
so that for each $\omega \in [1,f]$, 
the graph $G(n+\omega,k)$ has
$\omega$ contiguous cycles
\begin{equation}\label{eq:6.1}
C, C+1, \dots, C+\omega -1
\end{equation}
along with a tail $T_\omega$ of length $k - \omega q$.
(Note that $C-1$ can never be a cycle, since it contains the vertex $k$.)
The process of shedding was able to continue as long as $\omega <f$,
because then $k-1$ must be in the tail $T_\omega$, otherwise 
$T_\omega$ would be shorter than $q$ by Lemma 3.8.
The graph $G(n+\omega,k)$ has the cycles in \eqref{eq:6.1}
by Lemma 4.5 and Lemma 5.2.

For $\omega = f$, the tail $T_f$ has length $m <q$, so it is too
short to be able to shed a cycle of length $q$.   
For brevity, write $h=n+f$.
Thus, for the graph $G(h,k)$,
$k-1$ is in the cycle $C+f-1$ rather than in the tail $T_f$.
Therefore, by Lemma 3.8 and Lemma 4.2 with $n=h$,  the cycles 
$C$ and $C+f-1$  in $G(h,k)$ are
$C''$ and $C'$, respectively, where $C''$ contains both
$0$ and $(h-1)_{k+1}$, and $C'$ contains both $k-1$ and $(h-3)_{k+1}$.

We have so far shown that 
\begin{equation}\label{eq:6.2}
N(n+\omega, k) = \omega, \quad 0 \le \omega \le f.
\end{equation}
The points
\[
(n+\omega, N(n+\omega, k)) = (n +\omega,\omega),  \quad 0 \le \omega \le f 
\]
all lie on line segment of slope $1$ connecting
the lowest point $(n,0)$ 
to the highest point $(h, f)$. This line segment turns out to be the left
leg of an isosceles right triangle, as we now show.

The graph $G(h,k)$ has $f$ cycles, i.e. $N(h,k) = f$.
It remains to prove that
\begin{equation}\label{eq:6.3}
N(h+\omega, k) = f-\omega, \quad 1 \le \omega \le f,
\end{equation}
which will show that the points
\[
(h+\omega, N(h+\omega, k)) = (h +\omega,f-\omega),  \quad 0 \le \omega \le f
\]
all lie on the right leg of the isosceles right triangle,
with the highest point $(h,f)$ situated at the apex
and the lowest point $(h+f, 0)$ situated on the horizontal axis.

We proceed to prove \eqref{eq:6.3} by showing that as one 
continues to increment the
first argument of $G$, the $f$ cycles of $G(h,k)$  
will get absorbed one by one until there are no cycles left.
The cycle $C''$ in $G(h,k)$ contains $(h-1)_{k+1}$,
so it must also contain $h_k$, since 
\[
E(1,h):= (h-1)_{k+1} \rightarrow h_k.
\]
Thus by Lemmas 5.3 and 5.4,
the tail $T_f$ in $G(h,k)$ will absorb the cycle $C''$.
Now $G(h+1,k)$ consists of a tail $T_{f+1}$ of length $m+q$
together with the ${f-1}$ contiguous cycles 
\[
C''+1, C''+2, \dots, C''+f-1.
\]
Recall that  
$C'' +f-1$ is the same as the cycle $C'$, which contains $k-1$.
As the process of absorption continues, $C'$ has to be the last cycle
absorbed, i.e., $C'$ cannot be absorbed while there are still other cycles
remaining.  This is because once the tail contains both $0$ and $k-1$,
there can be no cycles left, by Lemma 3.7.
By Lemma 5.4, the process of absorption continues as long as $(k-1)$
is not in the tail.
Thus, for $1 \le \omega \le f$, the graph $G(h+\omega,k)$ will have
$f-\omega$ contiguous cycles
\[
C''+\omega , \dots, C''+f -1
\]
along with a tail $T_{f+\omega}$ of length $m + \omega q$.
In particular, $G(h+f,k)$ has a tail of length $k$ and no cycles.
This completes the proof of \eqref{eq:6.3}.

\section{Determination of the nullities $N(n,k)$}

If we knew the coordinates of the apex of a triangle in the line graph,
we would know the coordinates of all points on the legs of the triangle.
For example, when $k=50$, there is an apex at point $(878,4)$, signifying that
when $n=878$, the nullity is $4$, a local peak.   The line graph then
tells us that as
$n$ ranges in order from $874$ to $882$, the nullities are
$0,1,2,3,4,3,2,1,0$, respectively.  Thus, to determine all the nullities,
it remains to determine the coordinates of the apexes.
This will be accomplished in Theorem 7.5.

For an inequality $Q$,  define $\chi(Q)$ to be $1$ or $0$ according as
$Q$ is true or false.  We begin with three technical lemmas evaluating
the sizes of certain sets of integers $a$.  
In these lemmas, $M$, $j$, $q$, and $y$
are positive integers.  Throughout the sequel, for an integer $\ell$, let
$(\ell)_q$ denote the least nonnegative residue of $\ell$ modulo $q$.

\begin{lem}
Let $M,j \in [1,q]$ and $y \in [j,q]$.   Then
\begin{equation}\label{eq:7.1}
|\{a \in [0,M-1]: y-j \le  (aj)_q <y\}| = [Mj/q] + \chi(y \le (Mj)_q).
\end{equation}
\end{lem}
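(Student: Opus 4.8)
The plan is to count the integers $a \in [0,M-1]$ for which $(aj)_q$ lands in the half-open interval $[y-j, y)$ by partitioning $[0,M-1]$ according to how many times the quantity $aj$ has ``wrapped around'' modulo $q$. Write $aj = \lfloor aj/q \rfloor q + (aj)_q$. As $a$ runs from $0$ to $M-1$, the value $aj$ increases by $j$ at each step, so $(aj)_q$ increases by $j$ as well, except that it drops by $q-j$ precisely at those steps where a new multiple of $q$ is crossed. Since $j \le q$, each step crosses at most one multiple of $q$, and the total number of multiples of $q$ in the range $(0, (M-1)j]$ is $\lfloor (M-1)j/q \rfloor$; including the possibility that $Mj$ itself is counted differently at the endpoint leads to the two terms $[Mj/q]$ and $\chi(y \le (Mj)_q)$ on the right-hand side.

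First I would reformulate the condition. Because $y \in [j,q]$, the interval $[y-j, y)$ has length exactly $j$ and is contained in $[0,q)$, so for each residue class of $aj$ modulo $q$ the condition $y-j \le (aj)_q < y$ singles out exactly the ``last $j$ values before reaching $y$.'' Equivalently, $y - j \le (aj)_q < y$ holds if and only if there is an integer $b$ with $aj < bq + y \le aj + j$, i.e. $bq + y \in (aj, aj+j]$. So the count we want equals the number of pairs $(a,b)$ with $a \in [0,M-1]$ and $aj < bq+y \le (a+1)j$. Summing over $a$, the intervals $(aj,(a+1)j]$ for $a = 0,\dots,M-1$ tile the interval $(0, Mj]$ disjointly, so the total count is simply the number of integers of the form $bq+y$ lying in $(0, Mj]$, that is,
\[
|\{b \in \mathbb{Z} : 0 < bq + y \le Mj\}|.
\]
Since $y \in [1,q]$, the smallest admissible $b$ is $b=0$ (giving $bq+y = y > 0$), and $b=-1$ gives $y - q \le 0$, which is excluded; so $b$ ranges over nonnegative integers with $bq \le Mj - y$, and this count is $\lfloor (Mj-y)/q \rfloor + 1$.

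It then remains to check the elementary identity
\[
\Big\lfloor \frac{Mj - y}{q} \Big\rfloor + 1 = \Big\lfloor \frac{Mj}{q} \Big\rfloor + \chi(y \le (Mj)_q),
\]
which I would verify by writing $Mj = \lfloor Mj/q\rfloor q + (Mj)_q$ with $0 \le (Mj)_q < q$ and splitting into the two cases $y \le (Mj)_q$ and $y > (Mj)_q$ (using $1 \le y \le q$ to control the floor of $((Mj)_q - y)/q$, which is $0$ in the first case and $-1$ in the second). This finishes the proof.

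The only step requiring genuine care is the reduction of the original residue condition to counting lattice points $bq+y$ in $(0,Mj]$: one must confirm that the half-open intervals $(aj,(a+1)j]$ really do partition $(0,Mj]$ without overlap (immediate since they are consecutive half-open intervals of equal length $j$) and, more delicately, that the correspondence between ``$a$ satisfies the condition'' and ``exactly one $bq+y$ falls in $(aj,(a+1)j]$'' is a genuine bijection — i.e. that no interval of length $j \le q$ can contain two points congruent to $y$ mod $q$. Once that is pinned down, everything else is the routine floor-function bookkeeping sketched above, and I would not expect it to present any obstacle.
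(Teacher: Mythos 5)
Your proof is correct, but it takes a genuinely different route from the paper. The paper proves \eqref{eq:7.1} by induction on $M$: it verifies the case $M=1$, then shows that incrementing $M$ adds $\chi(y-j \le (Mj)_q < y)$ to the left side and tracks how the floor term and the indicator term on the right side absorb that increment, via the identity \eqref{eq:7.2}. You instead unfold the residue condition directly: $y-j \le (aj)_q < y$ holds iff some lattice point $bq+y$ lies in $(aj,(a+1)j]$, these half-open intervals tile $(0,Mj]$ disjointly, and an interval of length $j \le q$ contains at most one point of the residue class $y \bmod q$, so the count collapses to $\bigl\lfloor (Mj-y)/q \bigr\rfloor + 1$, after which a two-case floor computation (splitting on $y \le (Mj)_q$ versus $y > (Mj)_q$) recovers the stated right-hand side. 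Both arguments are sound; yours buys a closed form for the count that makes the appearance of the $\chi$ term transparent rather than emerging from induction bookkeeping, and essentially the same tiling argument would dispose of Lemmas 7.2 and 7.3 in one stroke (the paper handles those by two further inductions, one with details omitted). The one point you flag as delicate --- that the $a \leftrightarrow b$ correspondence is a bijection --- is indeed the crux, and your justification (no interval of length $j \le q$ contains two integers congruent to $y$ modulo $q$) is adequate; note also that the boundary case $Mj < y$ is silently handled because $\lfloor (Mj-y)/q\rfloor = -1$ there, which you may wish to state explicitly.
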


\begin{proof}
The result is easily checked for $M=1$, so assume as induction hypothesis
that \eqref{eq:7.1} holds for some $M \in [1,q)$.   Then
\begin{equation*}
\begin{split}
&|\{a \in [0,M]: y-j \le  (aj)_q <y\}| \\
&=[Mj/q] + \chi(y \le (Mj)_q) + \chi(y-j \le (Mj)_q <y) \\
&=[Mj/q] + \chi(y-j \le (Mj)_q).
\end{split}
\end{equation*}
To complete the induction, it remains to prove that
\begin{equation}\label{eq:7.2}
[Mj/q] + \chi(y-j \le (Mj)_q) = [(M+1)j/q] + \chi(y \le ((M+1)j)_q).
\end{equation}

Since $x = q[x/q] + x_q$ for any positive integer $x$, we have
\begin{equation}\label{eq:7.3}
Mj = \alpha q + \beta, \mbox{with}  \ \ \alpha=[Mj/q], \ \ \beta = (Mj)_q
\end{equation}
and
\begin{equation}\label{eq:7.4}
((M+1)j)_q = (\beta +j)_q = (\beta+j) - q[(\beta +j)/q]. 
\end{equation}
By \eqref{eq:7.3} and \eqref{eq:7.4}, we can write \eqref{eq:7.2}
in the form
\begin{equation}\label{eq:7.5}
\alpha+ \chi(y \le \beta+j)=\alpha + [(\beta+j)/q]
+\chi(y \le (\beta+j) -q[(\beta+j)/q]).
\end{equation}
When $\beta+j < q$, both sides of \eqref{eq:7.5} match, since
$[(\beta+j)/q]=0$.  When $\beta+j \ge q$, we have $[(\beta+j)/q]=1$,
so that \eqref{eq:7.5} becomes
\begin{equation}\label{eq:7.6}
\alpha +1 = \alpha +1 + \chi(y \le \beta +j -q).
\end{equation}
Since $y \ge j > \beta +j -q$, the rightmost term in \eqref{eq:7.6}
vanishes, so that \eqref{eq:7.6} holds.  This completes the proof
of \eqref{eq:7.5}.
\end{proof}

\begin{lem}
Let $M,j \in [1,q]$ and $y \in [1,j]$.   Then
\begin{equation}\label{eq:7.7}
|\{a \in [0,M-1]: y \le  (aj)_q <y + q - j\}| = 
M -[Mj/q] - \chi(y \le (Mj)_q).
\end{equation}
\end{lem}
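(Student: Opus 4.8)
The plan is to obtain Lemma 7.2 from Lemma 7.1 by a complementation argument followed by an induction on $M$ that mirrors the proof of Lemma 7.1. Since $y\le j$ we have $y+q-j\le q$, so the interval $[y,\,y+q-j-1]$ lies inside $\{0,1,\dots,q-1\}$ and its complement there is $[0,y-1]\cup[y+q-j,\,q-1]$, a ``wrapping'' interval of length $j$. Every residue $(aj)_q$ lies in exactly one of these two sets, so the left-hand side of \eqref{eq:7.7} equals
\[
M-\bigl|\{a\in[0,M-1]:(aj)_q\in[0,y-1]\cup[y+q-j,\,q-1]\}\bigr|.
\]
Hence it suffices to show that the cardinality being subtracted equals $[Mj/q]+\chi(y\le(Mj)_q)$, which is exactly the right-hand side of \eqref{eq:7.1}. (This coincidence is natural: $[0,y-1]\cup[y+q-j,q-1]$ is the cyclic interval of length $j$ whose largest element is $y-1$, and for $y\ge j$ this is precisely the non-wrapping interval $[y-j,y-1]$ appearing in Lemma 7.1.)

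To prove that sub-claim I would induct on $M$, just as for Lemma 7.1. The base case $M=1$ is immediate, since $(0\cdot j)_q=0\in[0,y-1]$ (as $y\ge1$), giving count $1$, and the proposed formula also evaluates to $1$ whether $j<q$ or $j=q$. For the inductive step, write $\alpha=[Mj/q]$ and $\beta=(Mj)_q$ as in \eqref{eq:7.3}; passing from $M$ to $M+1$ increases the count by $\chi\bigl(\beta\in[0,y-1]\cup[y+q-j,q-1]\bigr)=1-\chi(y\le\beta<y+q-j)$. Using $[(M+1)j/q]=\alpha+[(\beta+j)/q]$ and $((M+1)j)_q=(\beta+j)_q$, the step reduces to verifying
\[
\chi(y\le\beta)+1-\chi(y\le\beta<y+q-j)=[(\beta+j)/q]+\chi\bigl(y\le(\beta+j)_q\bigr),
\]
which I would check by splitting on whether $\beta+j<q$ or $\beta+j\ge q$.

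When $\beta+j<q$ both sides equal $1$: then $[(\beta+j)/q]=0$ and $(\beta+j)_q=\beta+j\ge j\ge y$, while $\beta<q-j$ forces $\beta<y+q-j$, so the two $\chi$'s on the left cancel. The case $\beta+j\ge q$ is where the wrapping genuinely matters: there $[(\beta+j)/q]=1$ and $(\beta+j)_q=\beta+j-q$, and after simplification the identity becomes $\chi(y\le\beta)\,\chi(y\le\beta+j-q)=\chi(y\le\beta+j-q)$, which holds because $\beta+j-q\le\beta$. I expect this last point --- recognizing that $\beta<y+q-j$ is exactly the negation of $y\le\beta+j-q$, and that the latter inequality forces $y\le\beta$ --- to be the only step requiring real care; the rest is the same bookkeeping as in the proof of Lemma 7.1, and the boundary $j=q$ (where $[Mj/q]$ jumps and $(Mj)_q=0$) needs only the remark already made for the base case.
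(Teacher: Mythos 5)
Your proof is correct and follows essentially the same route as the paper's: an induction on $M$ whose step reduces to the identity the paper labels \eqref{eq:7.8} (your displayed identity is exactly that one, transposed to the complementary count, since $\chi(y\le\beta)-\chi(y\le\beta<y+q-j)=\chi(y+q-j\le\beta)$). The initial complementation is only a repackaging, but it has the virtue that you actually carry out the case split on $\beta+j<q$ versus $\beta+j\ge q$, which the paper omits as ``analogous'' to the proof of \eqref{eq:7.2}, and your handling of the boundary case $j=q$ is also consistent with the paper's remark that both sides then vanish.
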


\begin{proof}
Both sides vanish when $j=q$, so we may assume $j < q$.
The result is easily checked for $M=1$, so assume as induction hypothesis
that \eqref{eq:7.7} holds for some $M \in [1,q)$.   Then
\begin{equation*}
\begin{split}
&|\{a \in [0,M]: y \le  (aj)_q <y +q - j\}| \\
&=M-[Mj/q] - \chi(y \le (Mj)_q) + \chi(y \le (Mj)_q <y+q - j) \\
&=M-[Mj/q] - \chi(y+q-j \le (Mj)_q).
\end{split}
\end{equation*}
To complete the induction, it remains to prove that
\begin{equation}\label{eq:7.8}
-[Mj/q] - \chi(y+q-j \le (Mj)_q) = 1-[(M+1)j/q] - \chi(y \le ((M+1)j)_q).
\end{equation}
The proof of \eqref{eq:7.8} is analogous to that of \eqref{eq:7.2},
so we omit the details.
\end{proof}

When $y=0$, \eqref{eq:7.7} holds if the rightmost term is omitted,
as is shown in the following lemma.
\begin{lem}
Let $M,j \in [1,q]$.   Then
\begin{equation}\label{eq:7.9}
|\{a \in [0,M-1]: 0 \le  (aj)_q < q - j\}| = 
M -[Mj/q].
\end{equation}
\end{lem}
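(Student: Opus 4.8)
The plan is to reduce \eqref{eq:7.9} to Lemma 7.1 by a complementary counting argument rather than running a fresh induction. First I would observe that for every integer $a$ the residue $(aj)_q$ lies in $[0,q)$, so the condition ``$0 \le (aj)_q$'' in \eqref{eq:7.9} is vacuous, and the set in \eqref{eq:7.9} is just $\{a \in [0,M-1] : (aj)_q < q-j\}$. Since $j \in [1,q]$ we have $q-j \in [0,q-1]$, so $[0,q)$ is the disjoint union $[0,q-j) \cup [q-j,q)$. Consequently the $M$ integers $a \in [0,M-1]$ split, according to the location of $(aj)_q$, as
\[
M = |\{a \in [0,M-1] : (aj)_q < q-j\}| + |\{a \in [0,M-1] : q-j \le (aj)_q < q\}|.
\]

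Next I would evaluate the second summand using Lemma 7.1 with $y=q$; the hypotheses $M,j \in [1,q]$ and $y=q \in [j,q]$ are satisfied, and $y-j=q-j$, so Lemma 7.1 gives
\[
|\{a \in [0,M-1] : q-j \le (aj)_q < q\}| = [Mj/q] + \chi(q \le (Mj)_q).
\]
Because $(Mj)_q \in [0,q-1]$, the inequality $q \le (Mj)_q$ is false, so $\chi(q \le (Mj)_q)=0$ and the second summand equals $[Mj/q]$. Substituting into the displayed identity above yields $|\{a \in [0,M-1] : (aj)_q < q-j\}| = M - [Mj/q]$, which is \eqref{eq:7.9}.

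There is no real obstacle here; the only points needing a moment's care are bookkeeping ones: that ``$0 \le (aj)_q$'' imposes no restriction, that $q-j$ genuinely lies in $[0,q-1]$ so the interval splits (in the degenerate case $j=q$ the first set is empty and the formula correctly reads $M-[Mj/q]=M-M=0$), and that the correction indicator vanishes since residues modulo $q$ never reach $q$. Alternatively one could mimic the induction on $M$ used for Lemmas 7.1 and 7.2: the inductive step reduces to proving $\chi((Mj)_q < q-j) = 1 - [(M+1)j/q] + [Mj/q]$, and writing $Mj = \alpha q + \beta$ with $\beta = (Mj)_q$ gives $[(M+1)j/q] - [Mj/q] = [(\beta+j)/q] = \chi(\beta + j \ge q) = \chi((Mj)_q \ge q-j) = 1 - \chi((Mj)_q < q-j)$, as required. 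I would present the complementary-counting proof, since it is shorter.
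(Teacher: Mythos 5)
Your complementary-counting proof is correct, but it takes a genuinely different route from the paper, which simply proves Lemma 7.3 ``by induction on $M$, as with the preceding two lemmas,'' i.e.\ by repeating the inductive scheme of Lemmas 7.1 and 7.2 without spelling out the details. Your observation that the set in \eqref{eq:7.9} is exactly the complement in $[0,M-1]$ of the set of Lemma 7.1 specialized at $y=q$ (where the hypothesis $y\in[j,q]$ holds and the correction term $\chi(q\le (Mj)_q)$ vanishes because residues mod $q$ lie in $[0,q-1]$) turns the lemma into a one-line corollary of Lemma 7.1 and makes clear that the three counting lemmas are not independent facts; your checks of the degenerate case $j=q$ and of the vacuity of ``$0\le (aj)_q$'' cover the only bookkeeping pitfalls. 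What the paper's uniform induction buys is consistency of presentation across the three lemmas and independence of Lemma 7.3 from Lemma 7.1; what your derivation buys is brevity and a structural explanation of why the $y=0$ case of \eqref{eq:7.7} ``holds if the rightmost term is omitted.'' Your sketched inductive alternative also matches the paper's intended argument, so either write-up would be acceptable.
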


\begin{proof}
This follows by induction on $M$, as with the preceding two lemmas.
\end{proof}

Given an edge $E: = a \rightarrow b$ in $G(n,k)$, define its length to be
$|E| = (b-a)_{k+1}$. There are only two possible edge lengths, as is shown
in the next lemma.

\begin{lem}
Given an edge $E: = a \rightarrow b$ in $G(n,k)$, its length is
\begin{equation}\label{eq:7.10}
|E| =
\begin{cases} (n_k - n_{k+1} +2)_{k+1}
& \mbox{   if} \ \ \ b \le n_k -1  \\
(n_k - n_{k+1} +1)_{k+1}
& \mbox{   if} \ \ \ b \ge n_k .
\end{cases}
\end{equation}
\end{lem}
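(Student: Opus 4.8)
The plan is to start from the explicit list \eqref{eq:2.2} of the $k$ edges of $G(n,k)$. The given edge $E\colon a \rightarrow b$ equals $E(i,n)$ for some $i \in [1,k]$, so $a = (i+n-2)_{k+1}$ and $b = (i+n-1)_k$. Setting $s = i+n-1$, we have $b = s_k$ and $a = (s-1)_{k+1}$. Expanding $s_k = s - k[s/k]$ and $(s-1)_{k+1} = (s-1) - (k+1)[(s-1)/(k+1)]$ and using $-k \equiv 1 \pmod{k+1}$ gives
\[
b - a \;\equiv\; 1 + [s/k] \pmod{k+1},
\]
so that $|E| = (b-a)_{k+1} = (1 + [s/k])_{k+1}$. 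Thus everything reduces to evaluating the floor $[s/k] = [(i+n-1)/k]$.

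The next step is to pin down this floor. As $i$ runs over $[1,k]$, the integer $s = i+n-1$ runs over the block $[n, n+k-1]$ of $k$ consecutive integers, and the endpoint $b = s_k$ picks out the unique $s$ in this block with $s \equiv b \pmod k$. Writing $n = k[n/k] + n_k$ with $n_k \in [0,k-1]$, that value of $s$ is $k[n/k] + b$ when $b \ge n_k$ (so $[s/k] = [n/k]$) and $k([n/k]+1) + b$ when $b \le n_k - 1$ (so $[s/k] = [n/k]+1$). Since $b \le k-1$ always holds, these two cases are mutually exclusive and exhaustive; this little case split is essentially the only content of the argument.

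Finally, reducing the identity $n = k[n/k] + n_k$ modulo $k+1$ gives $n_{k+1} \equiv n_k - [n/k] \pmod{k+1}$, i.e., $[n/k] \equiv n_k - n_{k+1} \pmod{k+1}$. Substituting this into $|E| = (1 + [s/k])_{k+1}$ yields $|E| = (n_k - n_{k+1} + 1)_{k+1}$ when $b \ge n_k$ and $|E| = (n_k - n_{k+1} + 2)_{k+1}$ when $b \le n_k - 1$, which is exactly \eqref{eq:7.10}. I expect no serious obstacle; the one point demanding care is keeping the two reductions---one modulo $k$, one modulo $k+1$---from getting tangled, along with the routine check that the block $[n, n+k-1]$ meets each residue class modulo $k$ exactly once.
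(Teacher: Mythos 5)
Your proposal is correct and follows essentially the same route as the paper: both start from the explicit edge list \eqref{eq:2.2}, split into the cases $b \ge n_k$ versus $b \le n_k - 1$ according to whether reduction mod $k$ wraps, and use $-k \equiv 1 \pmod{k+1}$ to land on \eqref{eq:7.10}. The paper simply substitutes $b = n_k + i - 1$ or $b = n_k + i - 1 - k$ directly into $|E| = (b - n_{k+1} - i + 2)_{k+1}$ so that $i$ cancels, whereas you route the same computation through $[s/k]$ and the congruence $[n/k] \equiv n_k - n_{k+1} \pmod{k+1}$; the difference is purely organizational.
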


\begin{proof}
For some $i \in [1, k]$ we have 
\[
a = (n+i-2)_{k+1}, \quad b= (n+i-1)_k, \quad |E|=(b-n_{k+1}-i +2)_{k+1}.
\]
If $b \le n_k-1$, then $b = n_k +i - 1 -k$, so that
\[
|E| = (n_k +i-1-k -n_{k+1}-i+2)_{k+1} = (n_k - n_{k+1} +2)_{k+1}.
\]
If $b \ge n_k$, then $b = n_k +i - 1$, so that
\[
|E| = (n_k +i-1 -n_{k+1}-i+2)_{k+1} = (n_k - n_{k+1} +1)_{k+1}.
\]
\end{proof}

In the first case of \eqref{eq:7.10}, we call $E$ a long edge,
while in the second case, we call $E$ a short edge.

Fix $k$. In preparation for Theorem 7.5, we introduce the following notation.
Let $q$ be an integer in the interval $[1,k]$.  From the division algorithm,
\begin{equation}\label{eq:7.11}
k = fq + m, \ \ \mbox{where} \ \ f=f(q):= [k/q], \quad m =m(q):= k_q.
\end{equation}
Let $j  \in [1,q]$ be an integer 
which is relatively prime to $q$.
Define
\begin{equation}\label{eq:7.12}
s(q,j) := |\{a \in [1,m]: (aj)_q \le (Mj)_q\}| \ , \mbox{where} \ M:=m+1.
\end{equation}
For integer $r \in [1,q-1]$, define
\begin{equation}\label{eq:7.13}
t(r,q,j):= |\{a \in [0,m]: (aj)_q < (rj)_q\}|.
\end{equation} 
Note that 
\[
0 \le s(q,j) \le m, \quad  1 \le t(r,q,j) \le M.
\]
Let $\eta$ denote the specific value of $n$ defined by
\begin{equation}\label{eq:7.14}
\eta = \eta(q,j): = k[(k+1)j/q] -k + (Mj)_q f +s(q,j).
\end{equation}
It's not difficult to check that  $\eta(q,j) \in [f,k^2+k-f]$.
For each integer $w \in [1, f]$, define
\begin{equation}\label{eq:7.15}
c(r) = c(r,w,q,j): =-1-w + (rj)_q f + t(r,q,j), \quad 1 \le r \le q-1,
\end{equation}
and set
\begin{equation}\label{eq:7.16}
c(0) = c(q) := k-w.
\end{equation}
It is  easy to check that for  each $w \in [1, f]$, the integers 
$c(1), \dots, c(q-1)$
are distinct  and lie in the interval $[0, k-f-1]$.

Reducing modulo $k$ and modulo $k+1$ in \eqref{eq:7.14}, we have
\begin{equation}\label{eq:7.17}
\eta_k = (Mj)_q f +s(q,j)
\end{equation}
and
\begin{equation}\label{eq:7.18}
\eta_{k+1} \equiv -[(k+1)j/q] +1 +(Mj)_q f +s(q,j) \pmod {k+1}.
\end{equation}
Thus by Lemma 7.4, the long edges in $G(\eta, k)$ have length
\begin{equation}\label{eq:7.19}
(\eta_k - \eta_{k+1} +2)_{k+1}=
\begin{cases} 1
& \mbox{   if} \ \ \ q=1  \\
[(k+1)j/q]+1
& \mbox{   if} \ \ \ q>1 .
\end{cases}
\end{equation}
and the short edges in $G(\eta, k)$ have length
\begin{equation}\label{eq:7.20}
(\eta_k - \eta_{k+1} +1)_{k+1}=
\begin{cases} 0
& \mbox{   if} \ \ \ q=1  \\
[(k+1)j/q]
& \mbox{   if} \ \ \ q>1 .
\end{cases}
\end{equation}
Observe that
\begin{equation}\label{eq:7.21}
[(k+1)j/q] = fj + [Mj/q].
\end{equation}

We are now prepared to identify the coordinates of the apexes
in our line graph.

\begin{thm}
The graph $G(\eta(q,j), k)$ has 
$f$ distinct cycles of length $q$ given by
\begin{equation}\label{eq:7.22}
k-w=c(0) \rightarrow c(1) \rightarrow \dots \rightarrow c(q)=k-w, \ \ \ 
\quad w \in [1,f],
\end{equation}
so that the line graph
has a local peak at each point in the set
\[
\S:=\{(\eta(q,j), f): 1 \le q \le k,  \ 1 \le j \le q, \ \gcd(j,q)=1\}.
\]
Moreover, the points in $\S$ comprise the totality of local peaks in
our line graph.
\end{thm}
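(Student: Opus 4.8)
The statement has two halves: that each point of $\S$ is a local peak of the line graph, and conversely that every local peak lies in $\S$. For the first half, fix $q\in[1,k]$, $j\in[1,q]$ with $\gcd(j,q)=1$, put $\eta=\eta(q,j)$, and the crucial (and most computational) step will be to verify that the $f$ closed walks displayed in \eqref{eq:7.22} really are cycles of $G(\eta,k)$ --- i.e.\ that for every $w\in[1,f]$ and every $r\in[0,q-1]$ the pair $c(r)\to c(r+1)$ (with $c(q)=c(0)=k-w$) is one of the edges $E(i,\eta)$. The plan is to compute $c(r+1)-c(r)$ modulo $k+1$ from the definitions \eqref{eq:7.15}--\eqref{eq:7.16}, splitting into the cases $(rj)_q+j<q$ and $(rj)_q+j\ge q$, and to evaluate the resulting differences $t(r+1,q,j)-t(r,q,j)$ --- and, for the two edges incident to the vertex $k-w$, the terms involving $s(q,j)$ --- using Lemmas 7.1--7.3 and the identity \eqref{eq:7.21}. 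In each case $c(r+1)-c(r)$ should reduce to the long edge length $[(k+1)j/q]+1$ of \eqref{eq:7.19} or the short edge length $[(k+1)j/q]$ of \eqref{eq:7.20}, and using \eqref{eq:7.17} one then checks that the long value occurs exactly when $c(r+1)\le\eta_k-1$ and the short value exactly when $c(r+1)\ge\eta_k$, which by Lemma 7.4 is precisely the condition for $c(r)\to c(r+1)$ to be an edge of $G(\eta,k)$. (The degenerate case $q=1$, where the cycles are simply the $k$ loops at $0,1,\dots,k-1$, is handled directly.) Since the $c(1),\dots,c(q-1)$ are distinct and lie in $[0,k-f-1]$ while $c(0)=k-w$ lies in $[k-f,k-1]$, each walk in \eqref{eq:7.22} is then a simple cycle of length exactly $q$, and for distinct $w$ these are distinct cycles, hence vertex-disjoint because every vertex of $G(\eta,k)$ has out-degree at most $1$.

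This gives $f$ vertex-disjoint cycles of length $q$ in $G(\eta,k)$, so $N(\eta,k)\ge f$. By Theorem 3.9 every cycle of $G(\eta,k)$ has length $q$, and since the vertex $k$ has in-degree $0$ (hence lies in no cycle) while $k=fq+m$ with $m<q$ forces $(f+1)q>k$, there is no room for more than $f$ disjoint cycles of length $q$; hence $G(\eta,k)$ has exactly $f$ cycles and $N(\eta,k)=f$ by Theorem 2.1. Moreover the $w=1$ cycle in \eqref{eq:7.22} contains the vertex $k-1$, so $k-1$ is not in the tail of $G(\eta,k)$, and since $N(\eta,k)=f>0$ the point $\eta$ lies strictly between two consecutive zeros of the line graph, inside one of the adjoining isosceles right triangles described in Section 6. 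All cycles occurring on that triangle have a common length, which is therefore $q$, so the triangle has apex height $[k/q]=f$; since $N(\eta,k)=f$ equals the apex height and that value is attained nowhere else on the triangle, $\eta$ must be the apex. Hence $(\eta,f)$ is a local peak, proving the first half.

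For the converse I would first show that $(q,j)\mapsto\eta(q,j)$ is injective on the index set of $\S$: if $\eta(q_1,j_1)=\eta(q_2,j_2)=:h$, then by the first half $G(h,k)$ has $f(q_1)$ cycles of length $q_1$ and also $f(q_2)$ cycles of length $q_2$, so $q_1=q_2=:q$; and by \eqref{eq:7.19} its long edges have length $[(k+1)j_1/q]+1=[(k+1)j_2/q]+1$, while $q\le k$ makes $j\mapsto[(k+1)j/q]$ strictly increasing on $[1,q]$ (consecutive values differ by at least $1$), so $j_1=j_2$. Thus the points of $\S$ are pairwise distinct local peaks, and the sum of their heights is
\[
\sum_{q=1}^{k}\ \sum_{\substack{1\le j\le q\\ \gcd(j,q)=1}} f(q)
=\sum_{q=1}^{k}\varphi(q)[k/q]
=\sum_{\substack{q,i\ge 1\\ qi\le k}}\varphi(q)
=\sum_{n=1}^{k}\ \sum_{d\mid n}\varphi(d)
=\sum_{n=1}^{k}n=\frac{k^2+k}{2}.
\]
On the other hand, over one period $[0,k^2+k]$ the line graph is a chain of adjoining isosceles right triangles whose feet are exactly the zeros of $N(\cdot,k)$ (with $N(0,k)=N(k^2+k,k)=0$), so if the apex heights are $g_1,\dots,g_r$ then $\sum_i 2g_i=k^2+k$, i.e.\ $\sum_i g_i=(k^2+k)/2$. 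Since $\S$ is a set of local peaks whose heights already sum to $(k^2+k)/2$ and every apex has positive height, $\S$ must contain every apex; that is, $\S$ is the totality of local peaks.

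The one real obstacle in this plan is the edge verification in the first paragraph --- matching $c(r+1)-c(r)$ against the two edge lengths via Lemmas 7.1--7.3 and the formula \eqref{eq:7.17} for $\eta_k$; once that bookkeeping is done, everything else is assembled from results already established in Sections 2--6 together with the elementary identity $\sum_{d\mid n}\varphi(d)=n$.
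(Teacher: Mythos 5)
Your proposal is correct and follows essentially the same route as the paper's proof: the edge verification you outline (matching $c(r+1)-c(r)$ against the long/short edge lengths via Lemmas 7.1--7.3, \eqref{eq:7.17}, \eqref{eq:7.21} and Lemma 7.4, with $q=1$ handled separately) is exactly the computation the paper carries out in \eqref{eq:7.23}--\eqref{eq:7.28}, and your injectivity argument for $(q,j)\mapsto\eta(q,j)$ matches the paper's. The only cosmetic difference is in the final counting step: you sum the apex heights to $(k^2+k)/2$ using $\sum_{d\mid n}\phi(d)=n$, whereas the paper sums the triangle base lengths to $k^2+k$ citing the identity $2\sum_{q=1}^{k}\phi(q)[k/q]=k^2+k$; these are equivalent.
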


\begin{proof}

We begin by proving that for each $r \in [1,q-1]$,
\begin{equation}\label{eq:7.23}
c(r) \le \eta_k -1 \ \ \ \mbox{if and only if} \ \ \ (rj)_q \le (Mj)_q.
\end{equation}
First suppose that $(rj)_q \le (Mj)_q$.
Subtracting \eqref{eq:7.15} from \eqref{eq:7.17}, we have
\begin{equation}\label{eq:7.24}
\begin{split}
&\eta_k -c(r) =  \\ 
&((Mj)_q - (rj)_q)f+w+|\{a \in [1,m]: (rj)_q \le (aj)_q \le (Mj)_q \}|. 
\end{split}
\end{equation}
The right side of \eqref{eq:7.24} is
greater or equal to $0+w+0 = w \ge 1$; thus \eqref{eq:7.23} holds in this case.

Next suppose that $(rj)_q > (Mj)_q$.
Subtracting \eqref{eq:7.17} from \eqref{eq:7.15}, we have
\begin{equation}\label{eq:7.25}
\begin{split}
&c(r) - \eta_k  =  \\ 
&((rj)_q - (Mj)_q)f-w+|\{a \in [1,m]: (Mj)_q < (aj)_q < (rj)_q \}|. 
\end{split}
\end{equation}
The right side of \eqref{eq:7.25} is
greater or equal to $f -w +0 \ge 0$.   
This completes the proof of \eqref{eq:7.23}.

For $r=q$, \eqref{eq:7.23} fails to hold; instead we have
\begin{equation}\label{eq:7.26}
c(q) = k - w \ge \eta_k.
\end{equation}
This follows because 
\[
w + \eta_k \le f + \eta_k =(1 + (Mj)_q)f + s(q,j) \le qf +m = k.
\]

We proceed to prove that \eqref{eq:7.22} is a cycle in $G(\eta, k)$
for each $w \in [1,f]$.
First consider the case $q=1$, wherein $f=k$, $m=0$, and $\eta = k^2$.
By \eqref{eq:2.2}, all $k$ edges in $G(k^2, k)$ are loops
of the form $(i-1)_{k+1} \rightarrow (i-1)_{k+1}$.
Thus, when $q=1$, \eqref{eq:7.22} is a cycle in $G(\eta, k)$
(of length $1$) for each $w \in [1,f]$.

Now assume that $q > 1$.  By \eqref{eq:7.26} and Lemma 7.4, 
an edge in $G(\eta, k)$ ending with $c(q) = k-w$ must be a short edge.
Therefore, in order to conclude that $c(q-1) \rightarrow c(q)$ is an edge
in $G(\eta, k)$, it suffices to show its edge length is given by \eqref{eq:7.21},
i.e., it suffices to show that
\begin{equation}\label{eq:7.27}
(c(q) - c(q-1))_{k+1} = fj +[Mj/q].
\end{equation}
The left member of \eqref{eq:7.27} equals
\begin{equation*}
\begin{split}
& (k-w + 1 + w +(j-q)f - t(q-1,q,j))_{k+1} = \\
&((j - q)f - |\{a \in [0,m]: (aj)_q < q-j \}|)_{k+1} = \\
& (jf + M -(M - [Mj/q]))_{k+1} = fj + [Mj/q],
\end{split}
\end{equation*}
where the penultimate equality follows from Lemma 7.3.
This completes the proof of \eqref{eq:7.27}.

To prove that \eqref{eq:7.22} is a cycle in $G(\eta,k)$,
it remains to show that
$E_r:=c(r-1) \rightarrow c(r)$ 
is an edge in $G(\eta, k)$ for each $r \in [1,q-1]$.
To this end, it suffices to show, in view of
 \eqref{eq:7.23} and Lemma 7.4,
 that 
\begin{equation}\label{eq:7.28}
|E_r|:= (c(r)-c(r-1))_{k+1}=jf + [Mj/q] + \chi((rj)_q \le (Mj)_q)
\end{equation}
for each $r \in [1,q-1]$.

\noindent{\em Case 1:} $\ (rj)_q > ((r-1)j)_q$.

We have
\begin{equation*}
|E_r| = ((rj)_q  - ((r-1)j)_q) f +|\{a \in [0,m]:((r-1)j)_q \le (aj)_q < (rj)_q \}|.
\end{equation*}
In Case 1,
\[
((r-1)j)_q = (rj)_q - j,
\]
so that
\[
|E_r| = jf +|\{a \in [0,m]: (rj)_q - j \le (aj)_q < (rj)_q \}|.
\]
Applying Lemma 7.1 with $y = (rj)_q$, we deduce \eqref{eq:7.28}.

\noindent{\em Case 2:} $\ (rj)_q < ((r-1)j)_q$.

Since 
\[
(c(r) - c(r-1))_{k+1} = k+1 - (c(r-1) - c(r))_{k+1},
\]
we have
\begin{equation*}
|E_r| = k+1 + ((rj)_q  - ((r-1)j)_q) f -|\{a \in [0,m]:(rj)_q \le (aj)_q < ((r-1)j)_q \}|.
\end{equation*}
In Case 2,
\[
((r-1)j)_q = (rj)_q +q - j,
\]
so that
\[
|E_r| = jf + M - |\{a \in [0,m]: (rj)_q  \le (aj)_q < (rj)_q +q -j\}|.
\]
Applying Lemma 7.2 with $y = (rj)_q$, we deduce \eqref{eq:7.28}.

The remark below \eqref{eq:7.16} shows that the cycles in \eqref{eq:7.22}
are distinct.  Thus $G(\eta(q,j),k)$ has $f$ cycles of length $q$, which
proves 
that every point in $\S$ is a local peak.

We proceed to prove that the integers 
\[
\eta(q,j) \ \ \mbox{with} \quad  
1 \le q \le k,  \ \ 1 \le j \le q, \ \ \gcd(j,q)=1
\]
are all distinct, so that
\begin{equation}\label{eq:7.29}
|\S| = \sum_{q=1}^k \phi(q),
\end{equation}
where $\phi$ is Euler's totient function.
Suppose that $\eta(q,j) = \eta(q',j')$.
Then $G(\eta(q,j),k)= G(\eta(q',j'),k)$.
Since the cycles in $G(\eta(q,j),k)$ have length $q$ for each $j$, we must have
$q = q'$.  Using \eqref{eq:7.19} or \eqref{eq:7.20} to compare edge lengths, we have
$[(k+1)j/q] = [(k+1)j'/q]$, which forces $j = j'$.   This completes the proof 
of \eqref{eq:7.29}.

Each point $(\eta(q,j),f) \in \S$ is an
apex of an isosceles right triangle in our line graph
whose base has length $2f = 2[k/q]$.  For each $q \in [1,k]$, 
there are $\phi(q)$ values of $j$ for which the 
base of the corresponding triangle has length $2[k/q]$.
Summing the base lengths of all the triangles with apexes in $\S$ yields the total length
\[
2 \sum_{q=1}^k \phi(q) [k/q].
\]
Amazingly, this sum equals $k^2 +k$ \cite[Ex. 9, p. 29]{Rose}.
Thus there is no room in the line graph for a triangle 
whose apex is not in $\S$.
This completes the proof that the points in $\S$ comprise the totality of local peaks
in our line graph.
\end{proof}

\section{Applications of the formulas for $N(n,k)$}

For any fixed nonnegative integer $z$, let $P_z(k)$ denote the percentage 
of matrices $A(n,k)$ with $n \in [0,k^2+k)$
for which $A(n,k)$ has nullity  $z$.   Theorems 8.1 and 8.2
provide asymptotic formulas for $P_z(k)$
as $k \rightarrow \infty$. Theorem 8.1 starts off with the case $z=0$.

\begin{thm}
Let $P_0(k)$ denote the percentage of matrices $A(n,k)$ which are nonsingular.
As $k$ gets large,  $P_0(k)$ approaches $3/\pi^2 \simeq 30.4 \%$.
\end{thm}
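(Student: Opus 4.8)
The plan is to count the number of $n \in [0, k^2+k)$ for which $N(n,k) = 0$, and show that this count divided by $k^2+k$ tends to $3/\pi^2$. From Section 6, the line graph (together with the horizontal axis) is a chain of adjoining isosceles right triangles with hypotenuses on the axis; the nullity is zero precisely at the feet of these triangles. A triangle with apex height $f$ has base of length $2f$, hence occupies a horizontal span of $2f$, and contributes exactly one interior zero between it and the next triangle (its right foot coincides with the left foot of the next triangle). So, counting zeros amounts to counting triangles. By Theorem 7.5, the apexes are exactly the points $(\eta(q,j), f(q))$ with $1 \le q \le k$ and $\gcd(j,q)=1$, and the triangle at $(\eta(q,j), f(q))$ has base $2f(q) = 2[k/q]$. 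Thus the number of triangles is $\sum_{q=1}^{k} \phi(q)$, and this is also (up to an off-by-one boundary adjustment that vanishes in the limit) the number of zeros of $N(\cdot,k)$ in a full period of length $k^2+k$.

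Therefore
\[
P_0(k) \sim \frac{\sum_{q=1}^k \phi(q)}{k^2+k}.
\]
Now I would invoke the classical asymptotic $\sum_{q=1}^k \phi(q) = \frac{3}{\pi^2} k^2 + O(k \log k)$, which follows from Mertens' theorem (or directly from $\sum_{q \le k}\phi(q) = \frac12 \sum_{d \le k}\mu(d)[k/d]([k/d]+1)$ and $\sum_{d\ge 1}\mu(d)/d^2 = 6/\pi^2$). Dividing by $k^2+k$ and letting $k \to \infty$ gives
\[
\lim_{k \to \infty} P_0(k) = \frac{3/\pi^2}{1} = \frac{3}{\pi^2} \simeq 0.304,
\]
as claimed.

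The main obstacle — really the only nontrivial point beyond quoting the $\phi$-sum asymptotic — is the bookkeeping that identifies the count of zeros with $\sum_{q=1}^k \phi(q)$ exactly (or within $O(1)$). One must check that each of the $\sum_{q=1}^k \phi(q)$ triangles contributes exactly one zero in the half-open period $[0,k^2+k)$, using the fact established at the end of the proof of Theorem 7.5 that the bases of all these triangles tile the interval of length $k^2+k$ (the identity $2\sum_{q=1}^k \phi(q)[k/q] = k^2+k$). Since the triangles tile the period with no gaps and no overlaps, their right feet (one per triangle) are exactly the zeros of $N(\cdot,k)$ in a period, giving the count $\sum_{q=1}^k \phi(q)$ on the nose; any ambiguity about whether a foot lands at $n=0$ or $n=k^2+k$ affects the count by at most $1$ and is absorbed into the $O(k\log k)$ error. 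Everything else is the routine asymptotic estimate above.
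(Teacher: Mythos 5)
Your proposal is correct and follows essentially the same route as the paper: identify the $n$ with $N(n,k)=0$ as the feet of the triangles (the paper uses left endpoints of hypotenuses, one per triangle), count the triangles as $|\S|=\sum_{q=1}^k\phi(q)$ via Theorem 7.5 and \eqref{eq:7.29}, and apply the classical asymptotic $\sum_{q\le k}\phi(q)\sim 3k^2/\pi^2$. Your extra care with the tiling identity $2\sum_{q=1}^k\phi(q)[k/q]=k^2+k$ and the boundary bookkeeping is a welcome amplification of a step the paper states more briskly, but it is not a different argument.
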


\begin{proof}
For each fixed $k$ and $n \in [0,k^2+k)$, 
$G(n,k)$ has no cycles if and only if $n$ is the left endpoint of the
hypotenuse of a triangle in the line graph.
Since there are $|\S|$ triangles in the line graph,
it follows that as $k$ gets large,  $P_0(k)$ is asymptotic to  $|\S|/(k^2+k)$.   
By \eqref{eq:7.29} and \cite[p. 26]{Rose},
$|\S|$ is asymptotic to $3 k^2 /\pi^2$.
Thus $P_0(k)$ approaches $3/\pi^2$.
\end{proof}

\begin{thm}
Fix an integer $z \ge 1$.
Let $P_z(k)$ denote the percentage of matrices $A(n,k)$ which have nullity $z$.
As $k$ gets large,  $P_z(k)$ approaches $3(1/z^2 + 1/(z+1)^2)/\pi^2$.
\end{thm}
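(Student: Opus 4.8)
The plan is to read $P_z(k)$ directly off the triangular shape of the line graph established in Section 6 and the catalogue of apexes in Theorem 7.5. Recall that over the fundamental range $n \in [0, k^2+k)$ the line graph, together with the horizontal axis, is a chain of isosceles right triangles whose hypotenuses lie on the axis, one triangle for each pair $(q,j)$ with $1 \le q \le k$, $1 \le j \le q$, and $\gcd(q,j)=1$; the triangle attached to $(q,j)$ has apex $(\eta(q,j), f)$ with $f = f(q) = [k/q]$, hence base of length $2f$. As $n$ runs through the integer points of such a triangle, from the left endpoint of its hypotenuse to the right endpoint, the nullity takes the values $0,1,\dots,f-1,f,f-1,\dots,1,0$. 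Consequently, for a fixed $z \ge 1$, this triangle contributes exactly two values of $n$ with $N(n,k)=z$ when $f \ge z+1$, exactly one such value (the apex) when $f = z$, and none when $f < z$; and since $z \ge 1$, no such $n$ is a hypotenuse endpoint shared with a neighbouring triangle, so there is no double counting.

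First I would count triangles by the size of $f(q)$. Since $[k/q] \ge M$ if and only if $q \le [k/M]$, the number of triangles with $f(q) \ge M$ is $\sum_{q=1}^{[k/M]}\phi(q)$. Combining this with the contribution rule above, the number of $n \in [0,k^2+k)$ with $N(n,k)=z$ equals
\[
2\sum_{q=1}^{[k/(z+1)]}\phi(q) \;+\; \sum_{q=[k/(z+1)]+1}^{[k/z]}\phi(q) \;=\; \sum_{q=1}^{[k/(z+1)]}\phi(q) \;+\; \sum_{q=1}^{[k/z]}\phi(q).
\]

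Next I would invoke the classical estimate $\sum_{q=1}^{N}\phi(q) = 3N^2/\pi^2 + O(N\log N)$ already used for Theorem 8.1 (see \cite[p. 26]{Rose}). Applying it with $N = [k/(z+1)]$ and with $N = [k/z]$ shows that the displayed count is $\tfrac{3}{\pi^2}\bigl(k^2/(z+1)^2 + k^2/z^2\bigr) + O(k\log k)$. Dividing by $k^2+k$ and letting $k \to \infty$ yields $P_z(k) \to \tfrac{3}{\pi^2}\bigl(1/z^2 + 1/(z+1)^2\bigr)$, as claimed.

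The computation is short once Theorem 7.5 is in hand; the only place demanding care is the bookkeeping in the first paragraph — distinguishing the triangles that contribute two preimages of $z$ from the finitely many that contribute one, and checking that the hypotenuse endpoints (nullity $0$), which are shared between consecutive triangles, play no role for $z \ge 1$. This is precisely the feature that makes the $z \ge 1$ formula differ in shape from the $z = 0$ formula of Theorem 8.1.
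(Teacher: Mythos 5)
Your proposal is correct and follows essentially the same route as the paper: count the triangles of height exactly $z$ (one contribution each, at the apex) and of height greater than $z$ (two contributions each), combine into $\sum_{q\le k/z}\phi(q)+\sum_{q\le k/(z+1)}\phi(q)$, and apply the asymptotic $\sum_{q\le N}\phi(q)\sim 3N^2/\pi^2$. Your extra remark about hypotenuse endpoints not causing double counting for $z\ge 1$ is a small point of care the paper leaves implicit, but the argument is the same.
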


\begin{proof}
We argue as in the previous proof.  The number of triangles of height $z$ 
in the line graph
is
\[
\sum_{k/(z+1) < q \le k/z} \phi(q),
\]
and each of these triangles contains a single point of height $z$, namely the apex.
The number of triangles of height $>z$ is
\[
\sum_{1 \le q \le k/(z+1)} \phi(q),
\]
and each of these triangles contains two points of height $z$.
Thus there are
\[
\sum_{k/(z+1) < q \le k/z} \phi(q) +2 \sum_{1 \le q \le k/(z+1)} \phi(q)
= \sum_{q=1}^{k/z} \phi(q) + \sum_{q=1}^{k/(z+1)} \phi(q)
\]
points of height $z$ in the line graph.
The right side is asymptotic to
\[
\frac{3k^2}{ z^2\pi^2} + \frac{3k^2}{(z+1)^2\pi^2 },
\]
so $P_z(k)$ approaches $3(1/z^2 + 1/(z+1)^2)/\pi^2$.
\end{proof}

For $k=300$, the exact number of $n \in [0, k^2 +k)$  for which
$N(n,k)=0$, $N(n,k)=1$, and $N(n,k)=2$ is $27398$, $34256$, and $9902$,
respectively.  
For comparison, the estimates in Theorems 8.1 and  8.2 give
\[
90300*\cfrac{3}{\pi^2} \simeq 27447.9, \ \  
90300*\cfrac{15}{4\pi^2} \simeq 34309.9,  \ \ 
90300*\cfrac{13}{12\pi^2} \simeq 9911.7.
\]

Fix $k$.   It is not a straightforward
task to evaluate $N(n,k)$ for a random value of $n$, 
since the nullity in Theorem 7.5 is not expressed explicitly as a function 
of $n$.   However, when $n$ is given as a function of $k$, we can often
apply Theorem 7.5 directly to evaluate $N(n,k)$.  The next eight theorems give
illustrative examples.

By \cite[Lemma 1]{EW}, $N(2k-\theta,k)=1$ for all odd $\theta$
with $k-1 \ge \theta \ge -1$.  Lemma 9.1 below shows that
for all even $\epsilon$ with $k-1 \ge \epsilon \ge -2$, we have
\begin{equation*}
N(2k-\epsilon,k)=
\begin{cases} 2
& \mbox{   if} \ \ \ \epsilon \equiv k+1 \pmod{3}  \\
0
& \mbox{   if} \ \ \ \epsilon \not \equiv k+1 \pmod{3} .
\end{cases}
\end{equation*}
A different method is needed to evaluate $N(2k+b, k)$ when $b \ge 3$.
In the next four theorems, we apply
Theorem 7.5 to evaluate $N(2k+b, k)$ for $b \in \{3,4,5,6\}$.

\begin{thm}
$N(2k+3,k)$ equals $3$ or $1$ according as $3 \mid k$ or $3 \nmid k$.
\end{thm}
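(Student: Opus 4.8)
The plan is to read off $N(2k+3,k)$ from Theorem 7.5 by locating which apex, if any, the point $n = 2k+3$ belongs to, and if it is not an apex, determining its position on a triangle leg. First I would record that the relevant $q$ is small: since a cycle through $k-w$ has length $q$ and the triangles at $n$ near $2k$ have height $f = [k/q]$, I expect $n=2k+3$ to sit on a triangle with $q=3$ (so $f = [k/3]$) when $3 \mid k$, and on triangles with $q=3$ or $q=2$ otherwise. Concretely, I would compute $\eta(3,j)$ from \eqref{eq:7.14} for $j \in \{1,2\}$ and $\eta(2,1)$ from the same formula, using \eqref{eq:7.11}--\eqref{eq:7.13} to evaluate $f$, $m$, $s(q,j)$, and the relevant $t(r,q,j)$; the arithmetic here is routine since $q$ is a fixed tiny integer, so $(aj)_q$ runs over a very short explicit list.

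The main case is $3 \mid k$, say $k = 3f$, so $m = 0$, $M = 1$, $s(3,j) = 0$. Then \eqref{eq:7.14} gives $\eta(3,j) = k[(k+1)j/3] - k$. For $j = 2$ one has $[(k+1)\cdot 2/3] = [(2k+2)/3] = 2f$ (using $k = 3f$, so $2k+2 = 6f+2$), hence $\eta(3,2) = k\cdot 2f - k = 2kf - k = \tfrac{2k^2}{3} - k$; that is not $2k+3$ for general $k$, so I should instead be looking at $q$ values with $f$ near $2$... let me reconsider: the apex height is $f = [k/q]$, and for the triangle straddling $n \approx 2k$ we need $f$ of order... actually the base is $2f = 2[k/q]$, and $2k+3$ is only a constant away from $2k$, so the controlling triangles have $q$ with $[k/q]$ comparable to $k$, i.e. $q$ small; but $\eta(q,j)$ itself is of order $k^2/q$ for small $q$, which is far from $2k$. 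The resolution is that $\eta$ is taken modulo $k^2+k$: the apex near $n = 2k+3$ is $\eta(q,j)$ for some $q$ close to $k$ (so $f = [k/q] = 1$ or $2$), since $\eta(q,j) \approx (k+1)j$ when $q \mid (k+1)$-ish. So I would look for $q \in \{k-1, k, \ldots\}$-neighborhood, or rather solve $\eta(q,j) \equiv 2k+3 \pmod{k^2+k}$ directly, which for $f \le 2$ is a short finite check.

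So the concrete plan: (1) argue $N(2k+3,k) \le 3$ and that the triangle containing $n = 2k+3$ has $q$ in an explicitly bounded small set by comparing $n$ to the apex formula \eqref{eq:7.14} reduced mod $k^2+k$; (2) when $3 \mid k$, exhibit $j$ and $q$ with $\eta(q,j) = 2k+3$ and $[k/q] = 3$ — I anticipate this is $q$ with $3q = k+1$ or similar, giving $f = [k/q] = 3$ exactly when $3\mid k$, so that $n=2k+3$ is itself an apex of height $3$; (3) when $3 \nmid k$, show $2k+3$ is the left endpoint of a hypotenuse, i.e. $G(2k+3,k)$ has no cycles, equivalently (by Lemma 3.7) that $0$ and $k-1$ both lie in the tail, either by direct computation with \eqref{eq:2.2} tracing the path in $G(2k+3,k)$ from $k$, or by noting $2k+3 = \eta(q,j) - f$ for the appropriate $(q,j)$ with $f = [k/q]$ and invoking \eqref{eq:6.2}. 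The main obstacle will be step (2)/(3): pinning down exactly which $(q,j)$ gives the apex adjacent to $2k+3$ and verifying the case split on $k \bmod 3$ cleanly — the modular reduction of $\eta(q,j)$ and the floor functions $[(k+1)j/q]$ need careful bookkeeping, though each individual computation is elementary. Alternatively, and perhaps more cleanly, I would bypass the apex formula entirely and just trace the tail of $G(2k+3,k)$ using \eqref{eq:2.2} with $n = 2k+3$: the edges are $E(i) = (i+2k+1)_{k+1} \rightarrow (i+2k+2)_k = (i+1)_{k+1}\cdot\text{-ish} \rightarrow (i+2)_k$, a fully explicit description from which one can directly count cycles and check against $k \bmod 3$; I expect this direct approach to be the shortest route and would present it as the main argument, using Theorem 7.5 or Theorem 2.1 only to justify that counting cycles gives the nullity.
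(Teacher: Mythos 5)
Your primary plan (locating $2k+3$ on the line graph via Theorem 7.5) is the paper's approach, but as written it does not land: you never identify the correct parameters, and your step (3) aims at the wrong target. The paper's proof is a two-line application of \eqref{eq:7.14}: when $3\mid k$ take $q=k/3$, $j=1$ (so $m=0$, $f=3$, and $\eta = 3k - k + 3 + 0 = 2k+3$), and when $3\nmid k$ take $q=k$, $j=3$ (so $m=0$, $f=1$, $\eta = 3k-k+3 = 2k+3$); in both cases $2k+3$ is itself an apex, of height $3$ or $1$ respectively, and no reduction mod $k^2+k$ is involved. Your candidates ($q=3$, then $q$ with $[k/q]\le 2$, then $3q=k+1$) all miss these; in particular $q=(k+1)/3$ gives $f=2$ and the apex $(2k,2)$, not $(2k+3,3)$. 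More seriously, in the case $3\nmid k$ you propose to show that $G(2k+3,k)$ has no cycles, i.e., that the nullity is $0$ --- but the statement asserts the nullity is $1$ in that case, so that step would fail outright: $2k+3$ is the apex of a height-$1$ triangle, not a base point of the line graph.

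Your alternative direct approach, which you say you would present as the main argument, is sound and is genuinely different from (and more elementary than) the paper's. From \eqref{eq:2.2} with $n=2k+3$, the $i$-th edge runs from $(i+2k+1)_{k+1}=(i-1)_{k+1}$ to $(i+2k+2)_k=(i+2)_k$ (your ``$(i+1)_{k+1}$-ish'' has the shift wrong), so the edge set is exactly $a\rightarrow (a+3)_k$ for $a\in\{0,\dots,k-1\}$, while the vertex $k$ is isolated because $(n-2)_{k+1}=k$. Hence the cycles of $G(2k+3,k)$ are the orbits of $a\mapsto a+3$ on $\mathbb{Z}/k\mathbb{Z}$, of which there are $\gcd(3,k)$, and Theorem 2.1 gives $N(2k+3,k)=\gcd(3,k)$, which is the claim. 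Carried out explicitly, this is a complete, self-contained proof that bypasses Section 7 entirely; but you must actually do the count rather than leave it as ``check against $k\bmod 3$'', and doing so would also have exposed the error in your step (3).
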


\begin{proof}
First suppose that $3 \mid k$. Then by \eqref{eq:7.14} with
$q=k/3$ and $j=1$, we have $\eta = 2k+3$,
since $m=0$ and $f=3$  when $q=k/3$.
By Theorem 7.5, $N(2k+3,k) = N(\eta, k) = f =3$, as desired.
Next suppose that $3 \nmid k$.  Then with $q=k$ and $j=3$, we have
$\eta = 2k+3$,
since $m=0$ and $f=1$  when $q=k$.
By Theorem 7.5, $N(2k+3,k) = N(\eta, k) = f =1$, as desired.
\end{proof}

\begin{thm}
$N(2k+4,k)$ equals $2$ or $0$ according as $3 \mid k$ or $3 \nmid k$.
\end{thm}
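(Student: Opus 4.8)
The plan is to mimic exactly the strategy used in the proof of Theorem 9.1, namely to realize $n = 2k+4$ as a value $\eta(q,j)$ for a suitable coprime pair $(q,j)$ and then read off the nullity from Theorem 7.5 as $f = f(q) = [k/q]$. The target nullity is $f$, so I want $f=2$ in the case $3\mid k$ and $f=0$ in the case $3\nmid k$. Since $f(q)=2$ requires $q$ to be roughly $k/2$, and since I need $\gcd(j,q)=1$ with $j\in[1,q]$, the natural candidate in the first case is $q = k/3$ together with $j=2$: when $3\mid k$ we have $k = 3q$, so $f=[k/q]=3$ — wait, that gives $f=3$, not $2$. So instead I should look for $q$ with $[k/q]=2$, i.e. $q\in(k/3,k/2]$, and the clean choice when $3\mid k$ is to try $q$ near $k/2$; but the cleanest is to revisit the computation of $\eta(q,j)$ directly.

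First I would write out $\eta(q,j)$ from \eqref{eq:7.14} with the chosen $(q,j)$, using $m=m(q)=k_q$ and $f=f(q)=[k/q]$, and the definitions \eqref{eq:7.12} of $s(q,j)$ and \eqref{eq:7.21} relating $[(k+1)j/q]$ to $fj+[Mj/q]$. The case $3\mid k$: I expect the right choice to be $q$ with $k/3 < q \le k/2$ forcing $f=2$; the arithmetically clean subcase is when $q\mid k$ is impossible for $f=2$, so I instead anticipate taking $q$ such that $k = 2q + m$ with $m$ small and controllable. Concretely, writing $k=3\ell$, I would try $q = \ell$ is wrong ($f=3$); rather I suspect the intended pair is $q$ arranged so $m=1$ or $m=2$ and $j$ chosen to make $s(q,j)$ and $(Mj)_q$ come out so that $k[(k+1)j/q]-k+(Mj)_qf+s(q,j)=2k+4$. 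The case $3\nmid k$ must produce $f=0$, which is impossible since $f=[k/q]\ge 1$ for $q\le k$; hence in that case the statement $N(2k+4,k)=0$ cannot come from exhibiting an apex — instead it must follow because $2k+4$ is \emph{not} a peak location and in fact $N(2k+4,k)=0$ means $2k+4$ is the left endpoint of a hypotenuse. So the real argument when $3\nmid k$ is: show $2k+4$ lies strictly between consecutive apexes at the base level, i.e. $G(2k+4,k)$ has no cycles. The cleanest route is probably to combine Theorem 9.1 ($N(2k+3,k)=1$ when $3\nmid k$) with the triangle shape from Section 6: since the line graph near $n=2k+3$ has a peak of height $1$ there (apex), the neighbors $n=2k+2$ and $n=2k+4$ both have nullity $0$.

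So the proof splits cleanly. For $3\nmid k$: by Theorem 9.1, $N(2k+3,k)=1$; I then argue that $n=2k+3$ is actually an apex (height-$1$ peak) of a triangle — this is exactly what the proof of Theorem 9.1 shows, since it exhibits $2k+3=\eta(k,3)$ with $f=1$, and Theorem 7.5 guarantees $\eta(q,j)$ is an apex. An apex of a height-$1$ triangle has base length $2f=2$, so its hypotenuse endpoints are at $n-1=2k+2$ and $n+1=2k+4$, where the nullity is $0$. Hence $N(2k+4,k)=0$. For $3\mid k$: I need to exhibit $(q,j)$ with $\gcd(j,q)=1$, $\eta(q,j)=2k+4$, and $f(q)=2$. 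Write $k=3t$. The candidate I would test first is $q = 2t$, $j = 1$, but then $m = k_q = t$ and $f = 1$, giving the wrong height; next try $q$ with $f=2$ and $m$ controlled: since $k=3t$, take $q$ with $2q \le 3t < 3q$, e.g. $q$ close to $3t/2$. When $t$ is even this is exactly $q = 3t/2$, $m=0$, $f=2$; when $t$ is odd I would take $q=(3t\pm1)/2$. In all these I would then plug into \eqref{eq:7.14} with a small $j$ (likely $j=1$), compute $s$ and $t$ via Lemmas 7.1–7.3, and verify $\eta = 2k+4$; Theorem 7.5 then gives $N = f = 2$.

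The main obstacle I expect is the case analysis within $3\mid k$: unlike Theorem 9.1, where $q=k/3$ and $q=k$ gave $m=0$ and trivial $s$-values, here $f=2$ forces $q\approx 3k/8$ to $k/2$, and $m = k_q$ need not be zero, so the quantities $s(q,j)$ and $(Mj)_q$ in \eqref{eq:7.14} require the counting Lemmas 7.1–7.3 rather than being immediate. I anticipate needing two or three subcases for $(q,j)$ according to $k\bmod 6$ (or $k\bmod$ a small number), each verified by a short direct substitution into \eqref{eq:7.14}. The $3\nmid k$ direction, by contrast, should be essentially free once I invoke the apex/triangle structure from Section 6 together with Theorem 9.1.
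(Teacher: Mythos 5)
Your handling of the case $3 \nmid k$ is exactly the paper's argument: $(2k+3,1)$ is an apex (from the preceding theorem on $N(2k+3,k)$), the triangle has base length $2f=2$, and $2k+4$ is the right endpoint of its hypotenuse, so $N(2k+4,k)=0$. That part is fine.

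The case $3 \mid k$ is where your plan breaks down, and it breaks down for a structural reason, not just a computational one. You propose to exhibit a coprime pair $(q,j)$ with $\eta(q,j)=2k+4$ and $f(q)=2$ and then invoke Theorem 7.5. But Theorem 7.5 also says that the points $(\eta(q,j),f)$ are \emph{precisely} the local peaks of the line graph, and when $3\mid k$ the point $n=2k+4$ is not a local peak: the proof of the preceding theorem already shows $(2k+3,3)$ is an apex, so its isosceles right triangle has base $[2k,2k+6]$ and the point at $n=2k+4$ sits on the descending right leg at height $2$, strictly below the apex. Hence no pair $(q,j)$ with $\eta(q,j)=2k+4$ exists, and the search you outline (over $q$ with $[k/q]=2$, various $j$, subcases in $k \bmod 6$) cannot terminate successfully. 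The irony is that the observation you used for $3\nmid k$ is exactly what you need here too: from the apex $(2k+3,3)$, the neighboring point one unit to the right on the line graph is $(2k+4,2)$, giving $N(2k+4,k)=2$ immediately. The paper's proof does precisely this in both cases at once --- the apex is $(2k+3,f)$ with $f=3$ or $1$, and the point to its right is $(2k+4,f-1)$, i.e., $2$ or $0$.
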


\begin{proof}
In the proof of Theorem 8.3, the point $(\eta, N(\eta,k)) = (2k+3,f)$
is an apex of a triangle in the line graph,
where $f$ is $3$ or $1$ according as $3 \mid k$ or $3 \nmid k$.
The neighboring point to the right of the apex on the line graph is 
$(2k+4, f-1)$, so that  $N(2k+4,k)$ equals
$2$ or $0$ according as $3 \mid k$ or $3 \nmid k$.
\end{proof}

\begin{thm}
$N(2k+5,k)=1$  for all $k \ge 1$.
\end{thm}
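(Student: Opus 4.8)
The plan is to lean on the ``chain of adjoining isosceles right triangles'' description of the line graph from Section 6, together with the two cases already settled in Theorems 8.3 and 8.4. The observation I would isolate at the outset is that \emph{$N(n,k)=0$ forces $N(n+1,k)=1$}: by Section 6 and the base-length identity used at the end of Section 7 (which shows the triangles tile one period $[0,k^2+k)$ with no gaps), a point $(n,0)$ with zero nullity is the left endpoint of the hypotenuse of the triangle immediately to its right, and that triangle has height $f=[k/q]\ge 1$ with a slope-$1$ left leg, so $N(n+1,k)=1$.

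I would first dispose of $k\le 2$ by periodicity. Since $N(n,k)$ is periodic in $n$ with period $k^2+k$, we have $N(7,1)=N(1,1)$ and $N(9,2)=N(3,2)$; Theorem 7.5 with $q=j=1$ places an apex of height $1$ at $n=1$ when $k=1$, and with $q=1,\ j=1$ places an apex of height $2$ at $n=4$ (a triangle with hypotenuse $[2,6]$) when $k=2$, so both values equal $1$. Now assume $k\ge 3$.

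If $3\mid k$, then in the proof of Theorem 8.3 the choice $q=k/3$, $j=1$ (so $f=3$, $m=0$) yields $\eta=2k+3$, hence $(2k+3,3)$ is an apex of an isosceles right triangle of base $6$. The point two steps down its right leg is $(2k+3+2,\,3-2)=(2k+5,1)$, so $N(2k+5,k)=1$. If instead $3\nmid k$, then $N(2k+4,k)=0$ by Theorem 8.4, and the general observation above gives $N(2k+5,k)=1$ at once.

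I do not anticipate a substantive difficulty; the delicate points are purely bookkeeping. The main one is justifying ``$N(n,k)=0\Rightarrow N(n+1,k)=1$'' cleanly, which rests on the no-gap tiling, so that every zero is simultaneously the right end of one hypotenuse and the left end of the next. One should also check, in the case $3\mid k$, that $2k+3$ lies in $[f,\,k^2+k-f]$ so the relevant triangle sits inside a single period (with equality $2k+3=k^2+k-f$ only at $k=3$, which is harmless modulo $k^2+k$).
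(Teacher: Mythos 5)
Your proof is correct and follows essentially the same route as the paper: Theorem 8.3 places an apex at $(2k+3,3)$ or $(2k+3,1)$, and moving two units to the right along the line graph lands at $(2k+5,1)$ in either case (the paper handles both cases in one sentence, while you split off $3\nmid k$ via Theorem 8.4 and a descent-then-ascent step). Note that your key auxiliary observation, that $N(n,k)=0$ forces $N(n+1,k)=1$, follows immediately from Lemma 5.7 (the nullity changes by exactly $\pm 1$), so the no-gap tiling argument is not needed for that step.
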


\begin{proof}
The proof of Theorem 8.3 yields the apex  $(2k+3,3)$  or $(2k+3,1)$
according as $3 \mid k$ or $3 \nmid k$. Moving two units to the right
along the line graph 
yields the point $(2k+5,1)$ in either case.  Thus $N(2k+5,k)=1$.
\end{proof}

\begin{thm}
For $k >2$, $N(2k+6,k)$ equals $2$ or $0$ according as
$k \equiv 1 \pmod{3}$ or $k \not \equiv 1 \pmod{3}$.
\end{thm}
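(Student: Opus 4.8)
The plan is to locate, for each residue of $k$ modulo $3$, an apex of the line graph lying within distance $f$ of the abscissa $n=2k+6$, and then to read off $N(2k+6,k)$ from the chain-of-isosceles-right-triangles shape established in Section 6, exactly as was done in the proofs of Theorems 8.4 and 8.5. Since $k>2$, the residues $k\equiv 0$ and $k\equiv 2\pmod 3$ should both produce nullity $0$, while $k\equiv 1\pmod 3$ should produce nullity $2$.

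When $k\equiv 0\pmod 3$ I would simply reuse Theorem 8.3: its proof exhibits the apex $(2k+3,3)$, and the right leg of the corresponding triangle joins $(2k+3,3)$ to $(2k+6,0)$, so $N(2k+6,k)=0$. When $k\equiv 2\pmod 3$ (hence $k\ge 5$) I would apply Theorem 7.5 with $q=(2k-1)/3$ and $j=2$; here $q$ is an odd positive integer $\le k$, so $\gcd(j,q)=1$, and one checks $f(q)=[k/q]=1$. Evaluating $[(k+1)j/q]$, $(Mj)_q$ and $s(q,j)$ from \eqref{eq:7.12}--\eqref{eq:7.14}, one finds $\eta(q,j)=2k+5$, so $(2k+5,1)\in\S$ is an apex whose right leg runs from $(2k+5,1)$ to $(2k+6,0)$, again giving $N(2k+6,k)=0$.

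When $k\equiv 1\pmod 3$ I would apply Theorem 7.5 with $q=(k-1)/3$ and $j=1$. For $k\ge 7$ one has $f(q)=[k/q]=3$, and after computing $[(k+1)/q]$, $(M)_q$ and $s(q,1)$ one gets $\eta(q,1)=2k+7$; hence $(2k+7,3)\in\S$ and $2k+6=\eta(q,1)-1$ lies on the left leg of that triangle, one unit below the apex, so $N(2k+6,k)=3-1=2$. The single remaining case $k=4$ I would dispatch directly: there $q=(k-1)/3=1$ forces $f=k$ and $\eta(1,1)=k^2$, the global maximum $(k^2,k)$ of the line graph, and since $2k+6=k^2-2$ lies two units to the left of that apex, $N(2k+6,4)=k-2=2$.

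The conceptual content here is slight; the real work lies in the bookkeeping needed to evaluate $\eta(q,j)$ via \eqref{eq:7.14} --- above all in pinning down $s(q,j)$ --- and in checking that the auxiliary quantities $(Mj)_q$ and $[(k+1)j/q]$ are correctly handled at the smallest admissible values $k=4,5,7$, where some of these sit at a boundary of their range. I expect this verification that $\eta(q,j)$ equals $2k+5$ (respectively $2k+7$), together with the disposal of those small cases, to be the only genuine obstacle; once the relevant apex is in hand, the conclusion is an immediate appeal to the triangular shape of the line graph.
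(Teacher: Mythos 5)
Your proof follows the paper's argument exactly: the same three apexes $(2k+3,3)$, $(2k+7,3)$, and $(2k+5,1)$, obtained from Theorem 7.5 with the same choices of $(q,j)$ in each residue class of $k$ modulo $3$, followed by the same descent along the legs of the triangles. Your separate disposal of $k=4$ is a worthwhile refinement the paper omits --- there $q=(k-1)/3=1$ forces $f=k$ and $\eta=k^2$ rather than the paper's stated $m=1$, $f=3$, $\eta=2k+7$ --- and your flagged caution at $k=5$ and $k=7$ is likewise justified, since the paper's parenthetical intermediate values (e.g.\ $(2M)_q=3$, $s(q,2)=2$) fail there even though $\eta$ still comes out to $2k+5$ and $2k+7$ respectively.
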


\begin{proof}
First suppose that $3 \mid k$.  As was noted in the proof of
Theorem 8.4, the point $(2k+3,3)$ is an apex of a triangle in the
line graph.   Moving down the right leg of this triangle, we see
that $(2k+6, 0)$ is on its base, so $N(2k+6,k)=0$. 

Next suppose that $k \equiv 1 \pmod{3}$. To show that $(2k+6,2)$
is on the line graph, it suffices to show that $(2k+7,3)$ is an apex
of a triangle.   By \eqref{eq:7.14} with
$q=(k-1)/3$ and $j=1$, we have $\eta = 2k+7$,
since $m=1$ and $f=3$  when $q=(k-1)/3$.
Thus by Theorem 7.5, $(\eta,f) = (2k+7,3)$ is an apex.

Finally suppose that $k \equiv 2 \pmod{3}$.
This is the trickiest case.
To show that $(2k+6,0)$
is on the line graph, it suffices to show that $(2k+5,1)$ is an apex
of a triangle.
From \eqref{eq:7.14} with
$q=(2k-1)/3$ and $j=2$, we can prove that $\eta = 2k+5$,
using the facts that  $m=(k+1)/3$ and $f=1$  when $q=(2k-1)/3$.
The proof that $\eta = 2k+5$ is facilitated by
the observations that $(2M)_q = 3$ and $s(q,2) = 2$,
where the last equality follows because $a=1$ and $a = m$
are the only values of $a$ satisfying the inequality in \eqref{eq:7.12}.
Thus by Theorem 7.5, $(\eta,f) = (2k+5,1)$ is an apex.
\end{proof}

\begin{thm}
Let $t, k \ge 2$ and suppose that $(t+1) \mid (k+1)$.
Then 
\[
N(tk + i, k) = N(tk -i,k) = t-i, \quad 0 \le i \le t.
\]
\end{thm}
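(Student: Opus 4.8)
The plan is to realize the point $(tk,t)$ as one of the apexes catalogued in Theorem 7.5, and then simply read off the nullities along the two legs of the isosceles right triangle that it caps. Set $q:=(k+1)/(t+1)$. Since $(t+1)\mid(k+1)$ and $t\ge 2$, this is a positive integer; moreover $(t+1)\mid(k+1)$ forces $t+1\le k+1$, and since $t\ge 2$ we get $1\le q\le (k+1)/3<k$, so $q\in[1,k]$. From $k+1=(t+1)q$ we have $k=tq+(q-1)$ with $0\le q-1<q$, so in the notation of \eqref{eq:7.11} we obtain $f=f(q)=[k/q]=t$ and $m=m(q)=q-1$, hence $M=m+1=q$.

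Next I would take $j:=1$, which is relatively prime to $q$ and lies in $[1,q]$. Then $(Mj)_q=(q)_q=0$; the set appearing in \eqref{eq:7.12} is $\{a\in[1,q-1]:(a)_q\le 0\}=\emptyset$, since $1\le a\le q-1$ gives $(a)_q=a\ge 1$, so $s(q,1)=0$; and $[(k+1)j/q]=[(k+1)/q]=t+1$. Substituting these into \eqref{eq:7.14} gives
\[
\eta(q,1)=k(t+1)-k+0\cdot f+0=tk .
\]
By Theorem 7.5, the point $(\eta(q,1),f)=(tk,t)$ is a local peak of the line graph, i.e., the apex of one of the isosceles right triangles that make up its multimodal shape; by Theorem 7.5 that triangle has base of length $2f=2t$.

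Finally, the left and right legs of that triangle have slopes $+1$ and $-1$, and the apex has height $t$, so the triangle descends to the horizontal axis at $(tk-t,0)$ and $(tk+t,0)$; both of these lie in $[0,k^2+k]$ because $t\le k$, so the whole triangle sits inside one period of the (period-$(k^2+k)$) function $n\mapsto N(n,k)$. Reading off the integer points on the two legs yields $N(tk+i,k)=N(tk-i,k)=t-i$ for $0\le i\le t$, which is the assertion. I do not expect a genuine obstacle here; the only thing demanding care is the bookkeeping in the second paragraph — in particular verifying $s(q,1)=0$ and $[(k+1)/q]=t+1$, and confirming $q\in[1,k]$ so that Theorem 7.5 is applicable.
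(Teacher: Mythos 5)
Your proposal is correct and follows essentially the same route as the paper: choose $q=(k+1)/(t+1)$ and $j=1$ in \eqref{eq:7.14} to get $\eta=tk$ with $f=t$, invoke Theorem 7.5 to identify $(tk,t)$ as an apex, and read the remaining values off the two legs of the triangle. Your version just carries out the bookkeeping ($m=q-1$, $s(q,1)=0$, $[(k+1)/q]=t+1$) more explicitly than the paper does.
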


\begin{proof}
By \eqref{eq:7.14} with
$q=(k+1)/(t+1)$ and $j=1$, we have $\eta = tk$,
since $m=(k-t)/(t+1)$ and $f=t$  when $q=(k+1)/(t+1)$.
Thus by Theorem 7.5, $(\eta,f) = (tk,t)$ is an apex,
which proves the result for $i=0$.   For general $i \in [0,t]$,
the result follows by descending from the apex 
along the line graph.
\end{proof}

\begin{thm}
For $1 < c < k$, we have 
$N(ck +c-k, k) = \gcd(c,k)$.
\end{thm}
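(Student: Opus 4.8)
The plan is to exhibit $N(ck+c-k,k)$ as a peak value $f$ in the line graph by finding a suitable pair $(q,j)$ with $\eta(q,j) = ck + c - k$ and $f(q) = \gcd(c,k)$. Set $d = \gcd(c,k)$ and write $c = dc'$, $k = dk'$ with $\gcd(c',k') = 1$. The natural candidate is $q = k' = k/d$, since then $f(q) = [k/q] = d$ and $m(q) = k_q = 0$; we then need to choose $j \in [1,q]$ with $\gcd(j,q)=1$ so that $\eta(q,j)$ collapses to the desired value. First I would note that because $m = 0$, the definitions simplify dramatically: $M = m+1 = 1$, so $(Mj)_q = (j)_q = j$ (as $1 \le j \le q$, and $j = q$ only if $q = 1$), and $s(q,j) = |\{a \in [1,0] : \ldots\}| = 0$. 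Hence \eqref{eq:7.14} becomes
\[
\eta(q,j) = k[(k+1)j/q] - k + jf + 0 = k[(k+1)j/q] - k + jd.
\]
So it remains to pick $j$ coprime to $k'$ with $k[(k+1)j/k'] - k + jd = ck + c - k = ck + c - k$, i.e. with $k[(k+1)j/k'] + jd = ck + c$.

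Next I would determine the right $j$. Writing $k+1 = dk' + 1$, we have $(k+1)j = dk'j + j$, so $[(k+1)j/k'] = dj + [j/k']$. Since $1 \le j \le k'$, the term $[j/k']$ is $1$ when $j = k'$ and $0$ otherwise; but $j = k'$ is coprime to $q = k'$ only if $k' = 1$, i.e. $d = k$, which is excluded since $c < k$ forces $d \le c < k$... actually one must be slightly careful, but generically $j < k'$ and $[j/k'] = 0$, giving $[(k+1)j/q] = dj$. Then the equation becomes $k \cdot dj + jd = ck + c$, i.e. $jd(k+1) = c(k+1)$, i.e. $jd = c$, i.e. $j = c/d = c'$. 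And indeed $\gcd(c', k') = 1$, and $1 \le c' $; one checks $c' \le k'$ iff $c \le k$, which holds. So the choice is $q = k/d$, $j = c/d$. With this, $\eta(q,j) = ck + c - k$ exactly, and by Theorem 7.5 the point $(\eta(q,j), f) = (ck+c-k, d)$ is a local peak of the line graph, so $N(ck+c-k,k) = d = \gcd(c,k)$.

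The main obstacle I anticipate is handling the boundary/degenerate subcases cleanly: (i) verifying $1 \le j = c' \le q = k'$ and in particular ruling out $j = k'$ unless that case is genuinely fine (which happens precisely when $c' = k'$, forcing $c = k$ — excluded by hypothesis $c < k$ — so $j < k'$ always and $[j/k'] = 0$, but this needs a sentence); (ii) confirming $q = k/d \ge 1$ and that $q \le k$ (automatic) and $q$ is a genuine integer (since $d \mid k$); (iii) double-checking the simplifications $(Mj)_q = j$ and $s(q,j) = 0$ when $m = 0$, directly from \eqref{eq:7.12}, since an empty index set forces $s = 0$; and (iv) noting the constraint $1 < c$ ensures $c' = c/d \ge 1$ with equality only if... $c' = 1$ is allowed, so this is fine, but one should confirm the formula still gives a valid peak. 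Once these bookkeeping checks are in place, the identity $\eta(k/d, c/d) = ck + c - k$ is a two-line computation and the result follows immediately from Theorem 7.5.
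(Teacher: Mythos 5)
Your proposal is correct and takes essentially the same route as the paper's proof: both select $q=k/d$ and $j=c/d$ in \eqref{eq:7.14} (using $m=0$, $f=d$) and conclude via Theorem 7.5 that $(ck+c-k,\,d)$ is an apex. You merely spell out the arithmetic verification that $\eta(k/d,c/d)=ck+c-k$, which the paper leaves implicit.
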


\begin{proof}
Write $d = \gcd(c,k)$.
By \eqref{eq:7.14} with
$q=k/d$ and $j=c/d$, we have $\eta = ck+c-k$,
since $m=0$ and $f=d$  when $q=k/d$.
Thus by Theorem 7.5, $(\eta,f) = (ck+c-k,d)$ is an apex,
and $N(ck +c-k, k) = d$.
\end{proof}

\begin{thm}
When $k$ is even, $A(3k,k)$ is nonsingular, i.e., $N(3k,k)=0$.  
When $k$ is odd,
$N(3k,k)$ equals $1$ or $3$ according as
$k \equiv 1 \pmod{4}$ or $k \equiv 3 \pmod{4}$.
\end{thm}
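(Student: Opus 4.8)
The plan is to compute the graph $G(3k,k)$ completely and then invoke Theorem 2.1, rather than working through Theorem 7.5. Put $n=3k$. Since $3k\equiv 0\pmod k$ and $3k=3(k+1)-3\equiv -3\pmod{k+1}$, we have $n_k=0$ and $n_{k+1}=(k-2)_{k+1}$. The crucial point is that $n_k=0$: by Lemma 7.4 no edge $a\to b$ can satisfy $b\le n_k-1$, so every edge of $G(3k,k)$ is a short edge of the common length $(n_k-n_{k+1}+1)_{k+1}=(3-k)_{k+1}\equiv 4\pmod{k+1}$. (One can also see this directly from \eqref{eq:2.2}: $E(i)=(i+3k-2)_{k+1}\to (i+3k-1)_k$, and since $i+3k-1\equiv i-1\pmod k$ with $i-1\in[0,k-1]$ the head is the integer $i-1$, while the tail is $(i-5)_{k+1}$, so head $\equiv$ tail $+\,4\pmod{k+1}$.) Hence every edge of $G(3k,k)$ has the form $a\to (a+4)_{k+1}$.

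Next I would identify the vertex set $\{0,1,\dots,k\}$ with $\mathbb{Z}/(k+1)$ and recognize $G(3k,k)$ as the functional graph $\Gamma$ of the map $x\mapsto x+4$ on $\mathbb{Z}/(k+1)$ with exactly one edge deleted. Indeed, as noted in Section 2, in $G(3k,k)$ the vertex $k$ has in-degree $0$, the vertex $(3k-2)_{k+1}=(k-4)_{k+1}$ has out-degree $0$, and every other vertex has in-degree and out-degree $1$; since in $\Gamma$ the vertex $(k-4)_{k+1}$ maps to $(k-4+4)_{k+1}=k$, the graph $G(3k,k)$ is precisely $\Gamma$ with the single edge $(k-4)_{k+1}\to k$ removed.

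Then I would apply the elementary fact that the functional graph of $x\mapsto x+4$ on $\mathbb{Z}/(k+1)$ is a disjoint union of $d:=\gcd(4,k+1)$ directed cycles — one for each coset of $\langle 4\rangle$ — each of length $(k+1)/d$. Removing one edge destroys exactly one of these cycles, turning it into a directed path (which is the tail of $G(3k,k)$), and leaves the remaining $d-1$ cycles intact; no new cycle is created. Therefore $G(3k,k)$ has exactly $d-1$ cycles, and Theorem 2.1 gives $N(3k,k)=\gcd(4,k+1)-1$. Since $\gcd(4,k+1)$ equals $1$, $2$, or $4$ according as $k$ is even, $k\equiv 1\pmod 4$, or $k\equiv 3\pmod 4$, this yields $N(3k,k)=0$, $1$, or $3$ respectively, which is the assertion.

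There is essentially no serious obstacle here: the whole argument hinges on the single observation that $n_k=0$ forces all $k$ edges to take the common step $+4\pmod{k+1}$, after which the conclusion is just the structure of $\langle 4\rangle$ in the cyclic group $\mathbb{Z}/(k+1)$. The only minor care is needed for very small $k$, where $(3k-2)_{k+1}$ may coincide with $k$ (so the deleted edge is a loop and the tail has length $0$); these are still covered by the same computation and can be verified directly.
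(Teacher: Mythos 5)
Your argument is correct, and it takes a genuinely different route from the paper's. The paper derives Theorem 8.8 from the apex machinery of Theorem 7.5: it splits into the four residue classes of $k$ modulo $4$, in each case exhibits a pair $(q,j)$ whose apex $\eta(q,j)$ lands at or near $n=3k$ (e.g.\ $q=(k+1)/2,\ j=2$ for $k\equiv 1$, and $q=(k+2)/4,\ j=1$ for $k\equiv 2$, with $k=2$ checked separately), and then reads off $N(3k,k)$ by walking along the legs of the corresponding triangle. You instead go back to Section 2: since $k\mid 3k$ we have $n_k=0$, so every edge of $G(3k,k)$ is a short edge of step $+4$ modulo $k+1$, and $G(3k,k)$ is the functional graph of translation by $4$ on $\mathbb{Z}/(k+1)$ with the one edge out of $(k-4)_{k+1}$ deleted; Theorem 2.1 then gives $N(3k,k)=\gcd(4,k+1)-1$ in a single stroke, which is exactly the claimed trichotomy. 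I checked the details (including the degenerate cases $k\in\{1,3\}$, where the deleted edge is a loop and the tail is the isolated vertex $k$), and they hold. Your approach buys uniformity (no case split, no dependence on Sections 3--7, one closed formula valid for all $k\ge 1$), and it generalizes immediately: whenever $k\mid n$, say $n=ck$, the same computation gives $N(ck,k)=\gcd(c+1,k+1)-1$, which also recovers $N(k^2,k)=k$. What the paper's route buys is an illustration of how to deploy Theorem 7.5 in practice, together with the exact locations of the neighboring apexes, from which the nullities at the adjacent values of $n$ come for free.
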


\begin{proof}
First suppose that $k \equiv 3 \pmod{4}$.
Then $N(3k, k) = 3$ by the special case $t=3, i=0$ of Theorem 8.7.

Next suppose that $k \equiv 1 \pmod{4}$.
By \eqref{eq:7.14} with
$q=(k+1)/2$ and $j=2$, we have $\eta = 3k$,
since $m=(k-1)/2$ and $f=1$  when $q=(k+1)/2$.
Thus by Theorem 7.5, $(\eta,f) = (3k,1)$ is an apex,
and $N(3k, k) = 1$.

Next suppose that $k \equiv 0 \pmod{4}$.
By \eqref{eq:7.14} with
$q=k/4$ and $j=1$, we have $\eta = 3k+4$,
since $m=0$ and $f=4$  when $q=k/4$.
Thus by Theorem 7.5, $(\eta,f) = (3k+4,4)$ is an apex of a triangle,
so that $N(3k+4, k) = 4$.  Moving four units to the left of the apex,
we obtain the point $(3k,0)$ on the base of the triangle, so that
$N(3k,k)=0$.

Finally suppose that $k \equiv 2 \pmod{4}$.
It is straightforward to check that $A(6,2)$ is nonsingular,
so we may assume that $k \ge 6$.
By \eqref{eq:7.14} with
$q=(k+2)/4$ and $j=1$, we have $\eta = 3k-3$,
since $m=(k-6)/4$ and $f=3$  when $q=(k+2)/4$.
Thus by Theorem 7.5, $(\eta,f) = (3k-3,3)$ is an apex of a triangle,
so that $N(3k-3, k) = 3$.  Moving three units to the right of the apex,
we obtain the point $(3k,0)$ on the base of the triangle, so that
$N(3k,k)=0$.
\end{proof}

\begin{thm}
Let $a,b$ be integers with
$b \ge 1$ and $0 \le a \le b$.
Then
\[
N((k+1 +a-b)(k-a)/b, k) = (k-a)/b, \quad \mbox{if} \ \ k \equiv a \pmod{b}.
\] 
\end{thm}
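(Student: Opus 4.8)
The plan is to realize the point $(n,(k-a)/b)$, where $n:=(k+1+a-b)(k-a)/b$, either directly as one of the apexes $(\eta(q,j),f)$ furnished by Theorem 7.5, or — in the boundary case $a=b$ — as the point lying one step to the left of such an apex. In every case the parameters fed to Theorem 7.5 will be $q:=b$ and $j:=1$. First I would dispose of the degenerate case $a=k$: then $n=0$ and $(k-a)/b=0$, so the claim reduces to $N(0,k)=0$, which holds by definition. Hence we may assume $a<k$; since $k\equiv a\pmod b$, this forces $k-a$ to be a positive multiple of $b$, so $k\ge a+b\ge b$. Therefore $q=b$ lies in $[1,k]$, while $j=1$ trivially satisfies $1\le j\le q$ and $\gcd(j,q)=1$, so $\eta(b,1)$ and Theorem 7.5 are available, and $f=(k-a)/b\ge 1$.

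\emph{Main case $0\le a\le b-1$.} Here $a$ is forced to equal $k_b$, so writing $k=fb+a$ in the notation of \eqref{eq:7.11} gives $m=a$, $f=[k/b]=(k-a)/b$, and $M=m+1=a+1$. The work is to evaluate the three ingredients of $\eta(b,1)$ in \eqref{eq:7.14}: the quotient $[(k+1)/b]$, the residue $(M)_b$, and the count $s(b,1)$ from \eqref{eq:7.12}. This splits into two transparent sub-cases. If $a\le b-2$, then $[(k+1)/b]=f$, $(M)_b=a+1$, and $s(b,1)=m=a$, whence $\eta(b,1)=f(k+a+1)-(k-a)$; substituting $f=(k-a)/b$ and factoring turns this into $(k-a)(k+1+a-b)/b=n$. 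If $a=b-1$, then $[(k+1)/b]=f+1$, $(M)_b=0$, and $s(b,1)=0$, whence $\eta(b,1)=kf=k(k-a)/b$, which again equals $n$ because $k+1+a-b=k$ in this sub-case. Either way $\eta(b,1)=n$, so Theorem 7.5 yields directly $N(n,k)=f=(k-a)/b$.

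\emph{Boundary case $a=b$.} Now $b\mid k$, so $m=0$ and $f=k/b$, and with $q=b$, $j=1$ we have $M=1$ and $s(b,1)=0$ (the index range $[1,m]$ is empty). For $b\ge 2$ one gets $(M)_b=1$ and $\eta(b,1)=(k+1)f-k=k(k+1-b)/b$, while for $b=1$ one gets $\eta(1,1)=k^2$; in both situations a one-line subtraction shows $\eta(b,1)-n=1$. Thus $n$ is the point immediately to the left of the apex $(\eta(b,1),f)\in\S$, so descending one step down the left leg of the corresponding isosceles right triangle — the shape of the line graph established in Section 6 and reconfirmed in Theorem 7.5 — gives $N(n,k)=f-1=(k-b)/b=(k-a)/b$, as required. (The instance $b=1$, $a=1$ recovers $N(k^2-1,k)=k-1$.)

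The only genuine computation, and the step where an error is most likely to creep in, is this split-into-sub-cases evaluation of $[(k+1)/b]$, $(M)_b$, and $s(b,1)$ followed by the algebraic simplification of \eqref{eq:7.14} down to the closed form $(k+1+a-b)(k-a)/b$; once that identity is in hand, the theorem is an immediate consequence of Theorem 7.5 together with the triangular shape of the line graph. Everything else — the reduction of the $a=k$ case, the admissibility of $(q,j)=(b,1)$, and the one-step descent in the boundary case — is routine.
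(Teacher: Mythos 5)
Your proposal is correct and follows essentially the same route as the paper: both apply \eqref{eq:7.14} with $q=b$, $j=1$ to identify $(\eta(b,1),(k-a)/b)$ as an apex via Theorem 7.5, handling $a<b$ directly and $a=b$ by stepping one unit left of the apex. You merely supply more of the arithmetic (the sub-cases $a\le b-2$, $a=b-1$ and the degenerate $a=k$) that the paper leaves implicit.
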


\begin{proof}
First suppose that $a <b$.  Then by \eqref{eq:7.14} with
$b=q$ and $j=1$, we have $\eta = (k+1 +a-b)(k-a)/b$,
since $m=a$ and $f = (k-a)/b$.  Since $(\eta, f)$ is an apex
by Theorem 7.5, the result follows in this case.
Now suppose that $a=b$.  We must prove that
\[
N(-1-k +(k+1)k/b, k) = -1  + k/b.
\]
By \eqref{eq:7.14} with $b=q$ and $j=1$, we have
$\eta =-k +(k+1)k/b$, since $m=0$ and $f=k/b$.
Since $(-k + (k+1)k/b, k/b)$ is an apex by Theorem 7.5,
the result follows by moving one unit to the left of the apex
along the line graph.
\end{proof}

For example, take $b=4$.  Then we have the nullity formulas
\begin{align*}
N((k^2-3k)/4, k) &= k/4, \quad &\mbox{if} \ \  k \equiv 0 \pmod{4},\\ \\
N( (k^2 -3k+2)/4, k) &= (k-1)/4, \quad &\mbox{if} \ \ k \equiv 1 \pmod{4},\\ \\
N( (k^2-3k+2)/4, k) &= (k-2)/4, \quad &\mbox{if} \ \ k \equiv 2 \pmod{4},\\ \\
N((k^2-3k)/4, k) &= (k-3)/4, \quad &\mbox{if} \ \ k \equiv 3 \pmod{4}.
\end{align*}

\section{Relation between $D(n,k,x)$ and $N(n,k)$}

Recall from the spectral theorem \cite[Theorem 12.2.2]{GH}
that $A(n,k)$ is unitarily diagonalizable.   In particular,
its nonzero eigenvalues are purely imaginary and occur in complex
conjugate pairs, so that $A(n,k)$ has even rank.

In the Introduction, we conjectured that for even $n$, the zero $x=0$ of the
polynomial  $D(n,k,x)$ has multiplicity $N(n,k)$,
where $D(n,k,x)$ is the determinant of the payoff matrix $A(n,k,x)$.
Since $A(n,k)$ is diagonalizable,
the conjecture can be restated as follows: for even $n$,
the  multiplicity of the zero $x=0$ of the determinant of $A(n,k,x)$ equals the
multiplicity of the $0$-eigenvalue of $A(n,k,0)$.   Using Mathematica, we have
verified this conjecture for all even $n$ with $n \le 150$.

The conjecture is easily proved for example when $k=1$, but it is open for
general $k < (n-2)/2$.  In Theorem  9.2, we
prove the conjecture for all $k \ge (n-2)/2$.  
Let  $\nu: = n/2 \in \mathbb{Z}$ and assume throughout
the remainder of this section that $2\nu -1 \ge k \ge \nu -1 \ge  1$.

Our matrix $A(n, k, x)$ is the same as the matrix $M(n, n -k - 1,x)$
defined in \cite{EW2}.  Thus, by
\cite[Lemma 2.3]{EW2}, we have
\begin{equation}\label{eq:9.1}
D(2\nu,k,x)= F_{2\nu-k-1}^2(x),
\end{equation}
where for integers $a \ge 0$,
\begin{equation}\label{eq:9.2}
F_0(x) = 1, \ F_1(x)=x, 
\quad F_{a+2}(x) = (x+1)F_{a+1}(x) -F_{a}(x).
\end{equation}
Induction on $a$ then shows that
\begin{equation}\label{eq:9.3}
F_{3a}(0)=(-1)^a, \quad F_{3a+1}(0) = 0, \quad F_{3a+2}(0)=(-1)^{a+1},
\end{equation}
and for the derivatives,
\begin{equation}\label{eq:9.4}
F'_{3a}(0)=(-1)^a a, \ F'_{3a+1}(0) = (-1)^a (2a+1), 
\ F'_{3a+2}(0)=(-1)^{a} (a+1).
\end{equation}

\begin{lem}
Suppose that  $2\nu-1 \ge k \ge \nu-1 \ge 1$. Then $N(2\nu,k)=2$
if $k \equiv 2\nu+1 \pmod 3$,
and $N(2\nu,k)=0$ otherwise.
\end{lem}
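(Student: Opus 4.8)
The plan is to pin down $N(2\nu,k)$ by combining the explicit value of the determinant $\det A(2\nu,k)=D(2\nu,k,0)$ with the parity of the nullity recorded at the start of this section. Since $A(2\nu,k)$ is skew-symmetric it has even rank, so $N(2\nu,k)$ has the parity of $2\nu$; that is, $N(2\nu,k)$ is even. Because $2\nu-1\ge k$, the integer $2\nu-k-1$ is nonnegative, so \eqref{eq:9.1} gives $\det A(2\nu,k)=F_{2\nu-k-1}^{2}(0)$, and by \eqref{eq:9.3} this vanishes precisely when $2\nu-k-1\equiv 1\pmod 3$, that is, when $k\equiv 2\nu-2\equiv 2\nu+1\pmod 3$.

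This already settles the ``otherwise'' clause: if $k\not\equiv 2\nu+1\pmod 3$ then $A(2\nu,k)$ is nonsingular, so $N(2\nu,k)=0$. If instead $k\equiv 2\nu+1\pmod 3$ then $A(2\nu,k)$ is singular, so $N(2\nu,k)\ge 1$, and being even it satisfies $N(2\nu,k)\ge 2$. Thus everything reduces to the single inequality $N(2\nu,k)\le 2$ in the congruent case, and I expect this to be the only real obstacle.

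To prove $N(2\nu,k)\le 2$ I would use a neighbor of $2\nu$ on the nullity line graph. By \cite[Lemma 1]{EW}, $N(2k-\theta,k)=1$ for every odd $\theta$ with $-1\le\theta\le k-1$. The choice $\theta=2k-2\nu+1$ is odd and lies in $[-1,k-1]$ exactly when $\nu-1\le k\le 2\nu-2$, giving $N(2\nu-1,k)=1$ for those $k$; the choice $\theta=2k-2\nu-1$ is odd and lies in $[-1,k-1]$ exactly when $\nu\le k\le 2\nu$, giving $N(2\nu+1,k)=1$ for those $k$. Every $k$ with $\nu-1\le k\le 2\nu-1$ satisfies $k\le 2\nu-2$ or $k\ge\nu$ (if $k=2\nu-1$ then $k\ge\nu$ since $\nu\ge 2$), so at least one of the neighbors $2\nu-1$, $2\nu+1$ has nullity $1$. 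By Lemma 5.7 the nullity changes by exactly $1$ when $n$ changes by $1$, hence $N(2\nu,k)\le 2$; together with $N(2\nu,k)\ge 2$ this gives $N(2\nu,k)=2$. (Alternatively, $N(2\nu,k)\le 2$ follows from the general fact that the nullity of a skew-symmetric matrix $A(0)$ is at most twice the order of vanishing at $x=0$ of the Pfaffian of $A(x)$, since \eqref{eq:9.1}, \eqref{eq:9.3} and \eqref{eq:9.4} show the Pfaffian of $A(2\nu,k,x)$ has at most a simple zero there; but the neighbor argument keeps the proof inside the machinery already developed.)
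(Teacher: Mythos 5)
Your proposal is correct and follows essentially the same route as the paper: compute $D(2\nu,k,0)=F_{2\nu-k-1}^2(0)$ via \eqref{eq:9.1} and \eqref{eq:9.3} to decide singularity, use even rank of the skew-symmetric matrix to get $N(2\nu,k)\ge 2$ in the singular case, and bound $N(2\nu,k)\le 2$ by exhibiting a neighboring odd argument ($2\nu-1$ or $2\nu+1$) with nullity $1$ together with Lemma 5.7. The only cosmetic difference is that you cite \cite[Lemma 1]{EW} for the nullity-$1$ fact at the odd neighbor, where the paper cites \cite[Lemma 2.1]{EW2}.
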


\begin{proof}
By \eqref{eq:9.1} and \eqref{eq:9.3},  $A(2\nu,k)$ is singular if and only if
$k \equiv 2\nu +1 \pmod 3$.
Suppose that this congruence holds.   Since
$A(2\nu,k)$ is a singular skew-symmetric matrix with even rank,
its nullity must be at least $2$.  We must prove that the nullity is
exactly $2$.  Appealing to \cite[Lemma 2.1]{EW2}, we see that
$A(2\nu-1,k)$ has nullity $1$ when $k < 2\nu -1$, and
$A(2\nu+1,k)$ has nullity $1$ when $k = 2\nu -1$.
Thus by Lemma 5.7, the nullity of $A(2\nu,k)$ 
cannot be higher than $2$, so it is exactly $2$.
\end{proof}

\begin{thm}
Suppose that $2\nu -1 \ge k \ge \nu -1 \ge 1$.
Then the zero $x=0$ of $D(2\nu,k,x)$ has multiplicity $N(2\nu,k)$.
\end{thm}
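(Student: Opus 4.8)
The plan is to combine the explicit formula \eqref{eq:9.1} for $D(2\nu,k,x)$ with the computation of $N(2\nu,k)$ from Lemma 9.1, reducing everything to a small calculation with the polynomials $F_a(x)$. Write $a := 2\nu - k - 1$, so that by \eqref{eq:9.1} we have $D(2\nu,k,x) = F_a(x)^2$; note that the hypothesis $2\nu-1 \ge k \ge \nu-1$ forces $0 \le a \le \nu - 1$, so the identities \eqref{eq:9.3} and \eqref{eq:9.4} apply. The multiplicity of the zero $x=0$ of $D(2\nu,k,x) = F_a(x)^2$ is exactly twice the multiplicity of the zero $x=0$ of $F_a(x)$, so the theorem will follow once I show that $F_a(x)$ has a zero of order $1$ at $x=0$ when $a \equiv 1 \pmod 3$ and a zero of order $0$ (i.e. $F_a(0) \ne 0$) otherwise — and that this matches $N(2\nu,k)/2$.

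First I would check the congruence bookkeeping: $a = 2\nu - k - 1 \equiv 1 \pmod 3$ precisely when $k \equiv 2\nu \pmod 3$, i.e. when $k \equiv 2\nu \pmod 3$; comparing with Lemma 9.1 where $N(2\nu,k) = 2$ iff $k \equiv 2\nu + 1 \pmod 3$, I need to be careful here — so let me recompute. We have $a \equiv 1 \pmod 3 \iff 2\nu - k - 1 \equiv 1 \iff k \equiv 2\nu - 2 \equiv 2\nu + 1 \pmod 3$. Good: so $a \equiv 1 \pmod 3$ is exactly the case $k \equiv 2\nu + 1 \pmod 3$, which by Lemma 9.1 is exactly when $N(2\nu,k) = 2$. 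Then I invoke \eqref{eq:9.3}: writing $a = 3b + r$ with $r \in \{0,1,2\}$, we have $F_a(0) = \pm 1 \ne 0$ when $r \in \{0,2\}$, so $D(2\nu,k,x) = F_a(x)^2$ has $F_a(0)^2 = 1 \ne 0$ and the multiplicity is $0 = N(2\nu,k)$. When $r = 1$, i.e. $a = 3b+1$, \eqref{eq:9.3} gives $F_a(0) = 0$ while \eqref{eq:9.4} gives $F_a'(0) = (-1)^b(2b+1) \ne 0$; hence $F_a(x)$ has a simple zero at $x=0$, so $F_a(x)^2$ has a double zero at $x=0$, and the multiplicity is $2 = N(2\nu,k)$, again by Lemma 9.1.

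The only genuine content beyond this matching is the verification of \eqref{eq:9.3} and \eqref{eq:9.4}, but those are asserted in the excerpt as provable by a routine induction on $a$ using the recursion \eqref{eq:9.2}, so I may cite them directly; I would at most remark that $F_a'(0)$ in the relevant case is odd and hence nonzero, which is all that is needed. I do not foresee a real obstacle here — the main thing to get right is the modular arithmetic linking $a \bmod 3$ to $k \bmod 3$ and thence to Lemma 9.1, and the trivial observation that the order of vanishing of a square is twice that of its base. In short: by \eqref{eq:9.1}, $\operatorname{ord}_{x=0} D(2\nu,k,x) = 2\operatorname{ord}_{x=0} F_{2\nu-k-1}(x)$, which by \eqref{eq:9.3}–\eqref{eq:9.4} equals $2$ if $2\nu-k-1 \equiv 1 \pmod 3$ and $0$ otherwise; this coincides with $N(2\nu,k)$ by Lemma 9.1, completing the proof.
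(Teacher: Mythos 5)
Your proposal is correct and follows essentially the same route as the paper: both reduce the claim via \eqref{eq:9.1} to the order of vanishing of $F_{2\nu-k-1}(x)$ at $x=0$, split into cases according to $k \bmod 3$ using \eqref{eq:9.3} and Lemma 9.1, and settle the singular case by observing that $F'_{3a+1}(0) = (-1)^a(2a+1) \neq 0$ from \eqref{eq:9.4}. (Your parenthetical bound $0 \le a \le \nu-1$ should read $0 \le a \le \nu$, but only $a \ge 0$ is actually needed, so nothing is affected.)
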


\begin{proof}
By \eqref{eq:9.1} and \eqref{eq:9.3}, $D(2\nu,k,0)$ is nonzero
if and only if $k \not \equiv 2\nu+1 \pmod 3$.
First suppose that $k \not \equiv 2\nu+1 \pmod 3$.
Then the multiplicity of the zero $x=0$ in $D(2\nu,k,x)$ is zero.
By Lemma 9.1,  $N(2\nu,k)=0$.

Next suppose that $k \equiv 2\nu+1 \pmod 3$, so that
the multiplicity of the zero $x=0$ in $D(2\nu,k,x)$ is at least $1$.
By Lemma 9.1, 
$N(2\nu,k) =2$, so in view of  \eqref{eq:9.1}, it remains to show that
the coefficient of $x$ in the polynomial $F_{2\nu-k-1}(x)$ is nonzero.
This follows from the formula for $F'_{3a+1}(0)$ in \eqref{eq:9.4}.
\end{proof}

\end{document}